\documentclass[12pt, a4paper]{article}

\usepackage{amsthm}
\usepackage{cite}
\usepackage{mathtools}
\usepackage{enumitem}
\usepackage{hyperref}
\usepackage{graphicx}
\usepackage{amssymb}
\usepackage{xcolor}
\usepackage{multirow}
\usepackage[all]{xy}
\usepackage{tikz}
\usepackage{booktabs}
\usepackage{arydshln}
\usepackage{mleftright}
\usepackage{pgffor}
\usepackage{blkarray}
\usepackage{subfigure}
\usepackage{authblk}
\usepackage[utf8]{inputenc}
\usepackage[T1]{fontenc}
\usepackage{amsmath}
\usepackage{amsfonts}
\usepackage{caption}
\usepackage{float}
\usepackage{ifthen}
\usepackage{tikz-cd}
\usepackage{longtable}
\usepackage{fullpage}
\usepackage{textcomp}
\usepackage{makecell}
\usepackage{setspace}
\usepackage{color}
\usepackage{chngcntr}
\usepackage[misc]{ifsym}
\usepackage{stmaryrd}
\usepackage{mathabx}

\hypersetup{
  colorlinks,
  linkcolor={blue!50!black},
  citecolor={blue!50!black},
  urlcolor={blue!80!black}
}

\usetikzlibrary{patterns}
\usetikzlibrary{intersections}
\usetikzlibrary{calc}

\setlist[enumerate]{labelsep=*, leftmargin=1.5pc}
\setlist[enumerate]{label=\normalfont(\roman*), ref=\roman*}
\usepackage[margin=2.7cm]{geometry}
\raggedbottom
\graphicspath{{images/}}
\theoremstyle{plain}
\newtheorem{thm}{Theorem}[section]

\newtheorem{lem}[thm]{Lemma}
\newtheorem{rmk}[thm]{Remark}
\newtheorem{cor}[thm]{Corollary}
\newtheorem{conjecture}[thm]{Conjecture}
\theoremstyle{definition}
\newtheorem{dfn}[thm]{Definition}

\newtheorem{eg}[thm]{Example}

\newtheorem{con}[thm]{Construction}


\DeclareMathOperator{\Spec}{Spec}
\DeclareMathOperator{\Proj}{Proj}
\DeclareMathOperator{\Ext}{Ext}


\newcommand {\ddlog}[1]{\frac{d}{d \; log \: #1}}
\newcommand {\ddx}[1]{\frac{d}{d #1}}
\newcommand{\Hom}{\cH om}

\def \Alphabet {A,B,C,D,E,F,G,H,I,J,K,L,M,N,O,P,Q,R,S,T,U,V,W,X,Y,Z}
\def \alphabet {a,b,c,d,e,f,g,h,i,j,k,l,m,n,o,p,q,r,s,t,u,v,w,x,y,z}

\foreach \x in \Alphabet {\expandafter\xdef\csname\x\x\endcsname{\noexpand\mathbb{\x}}}
\foreach \x in \Alphabet {\expandafter\xdef\csname b\x\endcsname{\noexpand\mathbf{\x}}}
\foreach \x in \Alphabet {\expandafter\xdef\csname c\x\endcsname{\noexpand\mathcal{\x}}}
\foreach \x in \alphabet {\expandafter\xdef\csname fr\x\endcsname{\noexpand\mathfrak{\x}}}
\foreach \x in \Alphabet {\expandafter\xdef\csname fr\x\endcsname{\noexpand\mathfrak{\x}}}

\begin{document}

\title{Towards the Doran-Harder-Thompson conjecture via the Gross-Siebert program}
\author{Lawrence J. Barrott, Charles F. Doran\footnote{The second author was supported by the Natural Sciences and Engineering Research Council of Canada (NSERC)}}
\maketitle

\begin{abstract}

  The Doran-Harder-Thompson ``gluing/splitting'' conjecture unifies mirror symmetry conjectures for Calabi-Yau and Fano varieties, relating fibration structures on Calabi-Yau varieties to the existence of certain types of degenerations on their mirrors. This was studied for the case of Calabi-Yau complete intersections in toric varieties in~\cite{tciDHT} for the Hori-Vafa mirror construction. In this paper we prove one direction of the conjecture using a modified version of the Gross-Siebert program. This involves a careful study of the implications within tropical geometry and applying modern deformation theory for singular Calabi-Yau varieties.
  
\end{abstract}

Mirror symmetry started with the study of Calabi-Yau manifolds, predicting a duality associating to any Calabi-Yau $X$ a mirror partner $\check{X}$. As it grew it came to incorporate other statements, most notably it predicts a similar duality associating to any Fano variety and anti-canonical divisor $(Z,D)$ a Landau-Ginzburg model, a variety $\check{W}$ and a super-potential $w: \check{W} \to \CC$. The theory of Landau-Ginzburg models was formalised in~\cite{HarderLGModels} where these were required to be a fibration of $\check{X}$ by lower dimensional Calabi-Yau varieties.

The two worlds, Calabi-Yau and Fano, meet in the example of Tyurin degenerations. A Tyurin degeneration is a semi-stable degeneration $\frX_t \to \Delta$ such that the central fibre is the union of two smooth components $\frX_0 = \frZ_1 \cup \frZ_2$ meeting along a smooth divisor $\frD$. From the work of Kawamata-Nakayama $\frD$ is an effective anti-canonical divisor on the components $\frZ_i$, and the normal bundles are dual, $\cN_{\frD/\frZ_0} \otimes \cN_{\frD/\frZ_0} \cong \cO_{\frD}$, and this is a necessary and sufficient condition for such smoothings to exist. This was expanded on by Kato in~\cite{LogDeformationTheory} where he proved that this condition is equivalent to the existence of a log structure of semi-stable type (in modern language locally free) on the central fibre, log smooth over the standard log point. This generalises the Kawamata-Nakayama condition since there are surgeries permitted by log geometry which produce central fibres necessarily smoothable but not of the type they consider, where the central fibre is a chain of components. The most basic example would be to blow up $D$ inside the total space, to obtain a smoothable central fibre with three smooth components, $\cZ_0 \cup \cZ_1 \cup \cZ_2$ meeting along two copies of $\cD$, say $\cD_1$ and $\cD_2$. Note that it is not true that we have a balancing condition $\cN_{\cD_1 / \cZ_0} \otimes \cN_{\cD_1 / \cZ_1} \cong \cO_{\cD_1}$, instead we only have $\cN_{\cD_1 / \cZ_0} \otimes \cN_{\cD_1 / \cZ_1} \otimes \cN_{\cD_2 / \cZ_1} \otimes \cN_{\cD_2 / \cZ_2} \cong \cO_{\cD}$.

We will study higher order degenerations extending the definition of Tyurin degenerations. This produces a notion of higher rank Landau-Ginzburg model, appearing in situations where we have not a smooth divisor pair, but an snc divisor pair.

\begin{dfn}

  Let $\frX \to \AA^1$ be a family such that $\frX_{gen}$ and $\frX$ are both smooth and the central fibre is an snc divisor whose smallest strata is codimension $k$. We call such families $k$-Tyurin degenerations, in analogy to type $k$ degenerations of Calabi-Yau varieties.

   Let $\cS$ be a smooth variety of dimension $k$ and $w: \frW \to \cS$ a proper family with $\frW$ smooth and generic fibre a smooth Calabi-Yau. We call such a structure a rank $k$ Landau-Ginzburg model.
  
\end{dfn}

There is relatively little literature on higher rank LG models. A sensible restriction is to look at those families such that $\cS$ admits an snc compactification and such that $\frW$ admits a relatively log smooth compactification whose fibres over an $n$ dimensional strata of $\cS$ has a strata of dimension $n$. This recreates the expectation that the compactification of an LG model has a MUM point at infinity. We will see how this condition naturally arises. For the remains of the introduction we exposit the Tyurin case where $k=1$.

Under these conditions we can form the mirror to $\frX_{gen}$, $\widecheck{\frX_{gen}}$, and the mirror to the pairs $(\frZ_0, \frD)$ and $(\frZ_1, \frD)$, $W_0: \widecheck{\frZ}_0 \to \CC$ and $W_1: \widecheck{\frZ}_1 \to \CC$. One direction of the Doran-Harder-Thompson conjecture is the statement that $\widecheck{\frX_{gen}}$ is fibred over $\PP^1$, which we prove given a compatible toric degeneration:

\begin{conjecture}[``Degenerations to Fibrations'']
\label{ConjectureDHT} [Corollary~\ref{DegenerationsToFibrations}]
  If $\frX$ admits a Tyurin degeneration to $\frZ_0 \cup_{\frD} \frZ_1$ then the mirror $\widecheck{\frX_{gen}}$ admits a map to $\PP^1$ with fibres smooth Calabi-Yau varieties of one dimension lower and further $\frD$ is mirror to a generic fibre of this fibration.

\end{conjecture}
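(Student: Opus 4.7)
The plan is to apply the Gross-Siebert mirror construction to the tropical data produced by the compatible toric degeneration of $\frX$, exploit the Tyurin splitting to cut out an integral piecewise-linear height function on the intersection complex, and show that under discrete Legendre duality this height function dualises to an integral-affine map from the mirror intersection complex to the tropicalisation of $\PP^1$. The fibration $\widecheck{\frX_{gen}} \to \PP^1$ is then obtained by lifting this tropical morphism through a relative version of Gross-Siebert reconstruction.

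First I would refine the compatible toric degeneration so that each toric component of its central fibre lies in exactly one of $\frZ_0$, $\frZ_1$, or inside $\frD$. This refinement induces a coarsening of the polyhedral decomposition $\cP$ on the intersection complex $B$ of $\frX_{gen}$ whose maximal cells are precisely two subsets $B_0, B_1$ meeting along a codimension-one subcomplex $B_\frD$; by construction $B_i$ is the intersection complex of $\frZ_i$ and $B_\frD$ that of $\frD$. The coarsening is encoded by an integral piecewise-linear function $h\colon B \to [0,1]$ affine on each $B_i$, and the cell decomposition together with the multivalued PL function $\phi$ restrict compatibly to $B_\frD$, so Gross-Siebert data for $\frD$ is tropically visible inside that of $\frX_{gen}$.

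Applying the discrete Legendre transform gives mirror tropical data $(\check B, \check \cP, \check \phi)$ and, via Gross-Siebert smoothing, a formal toric degeneration $\widecheck{\frX}$ whose generic fibre is $\widecheck{\frX_{gen}}$. The crucial tropical observation is that $h$ dualises to an integral-affine projection $\check h\colon \check B \to \RR$ whose generic fibres are affinely equivalent to $\check B_\frD$, the intersection complex of $\widecheck\frD$. Compactifying the target to tropical $\PP^1$, I would then argue that the wall structure reconstructing the mirror is compatible with $\check h$ in the sense that every wall either lies in a fibre or projects with constant slope, so that scattering can be propagated fibrewise. This compatibility lifts $\check h$ to a morphism $\widecheck{\frX} \to \PP^1$ of formal toric degenerations; specialising to a generic fibre of the mirror degeneration parameter gives $\widecheck{\frX_{gen}} \to \PP^1$, and the uniqueness clause of Gross-Siebert reconstruction identifies its generic fibre with $\widecheck\frD$, which is smooth Calabi-Yau of dimension $\dim \frX - 1$ since $\frD$ is.

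I expect the main obstacle to be precisely this wall-structure compatibility. The Gross-Siebert walls are built inductively on order in the smoothing parameter, with new walls introduced at each step to maintain consistency around codimension-two joints, so one must verify that no scattering ever produces a wall transverse to the fibres of $\check h$. This reduces to a codimension-two analysis of $\check B$ near singularities of $\check h$, which is where the ``modern deformation theory for singular Calabi-Yau varieties'' promised in the abstract should enter, in the form of a relative logarithmic obstruction theory controlling lifts of log morphisms between families produced by Gross-Siebert reconstruction.
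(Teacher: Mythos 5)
Your tropical setup — cutting out a piecewise-linear height function on the intersection complex of $\frX$, identifying $\Sigma(\frD)$ inside $\Sigma(\frX_\Delta)$, and exhibiting an affine-linear embedding in the cone picture — is in the spirit of the paper's Lemma~\ref{Localembedding} and the lemmas following it. The paper is somewhat more careful here (the identification of $\Sigma(\frZ,\partial\frZ)$ inside $\Sigma(\frX_\Delta)$ is only a local, not a global, isomorphism, and one must fix a point of a $\GG_m^{k+1}$-torsor of choices to get a strict embedding with simple log structure), but in outline this zeroth-order step agrees. The divergence — and the gap in your argument — is in how the $0$-th order embedding is promoted to a fibration of the smoothed mirror.

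You propose to do this by tracking the Gross-Siebert wall structure: ``every wall either lies in a fibre or projects with constant slope, so that scattering can be propagated fibrewise.'' The paper explicitly says this is exactly what one cannot currently do: ``It is not clear at the moment how to simultaneously smooth both $\widecheck{\frX_0}$ and a Calabi-Yau subvariety inside it purely using the techniques of the program. The algebra map on the central fibre is clear, but there are quantum corrections that occur during the smoothing procedure which we cannot control.'' Walls are generated inductively by scattering at codimension-two joints, and there is no mechanism in the existing scattering machinery that guarantees the newly created walls respect the tangency condition to $\widecheck{\frD}$, or equivalently that the corrected theta functions restrict coherently along the proposed fibration. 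You flag this as ``the main obstacle'' but do not supply an argument to overcome it; invoking ``a relative logarithmic obstruction theory controlling lifts of log morphisms'' is where the actual mathematical content has to sit, and as stated it is a placeholder rather than a proof.

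What the paper does instead is sidestep the scattering diagram entirely at the crucial step. Rather than trying to show the walls respect the fibration, it replaces scattering by the Maurer-Cartan/BV-algebra smoothing framework of Chan-Leung-Ma and Felten-Filip-Ruddat. It introduces the sheaf of polyvector fields on $\widecheck{\frX}$ tangent to $\widecheck{\frD}$, $\bigwedge\Theta_{\widecheck{\frX}\langle\widecheck{\frD}\rangle}$, proves this is a sub-Gerstenhaber algebra on which the BV operator restricts, checks Frobenius-invariance and Hodge-to-de-Rham degeneration for the associated cohomology (this is the ``modern deformation theory'' advertised), and then applies the FFR machinery to simultaneously smooth the pair. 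The fibration of the generic fibre is then a corollary of classical deformation theory for a smooth Calabi-Yau divisor with trivial normal bundle, not of any control over walls. If you want to keep your scattering-based route you would need to prove an analogue of their BV compatibility at the level of wall functions — in effect a relative/tangential scattering diagram — which the authors explicitly defer as ``a fundamental development'' for future work. As written, your step from $0$-th order to all orders is not justified.
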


\noindent Our proof naturally extends to $k$-Tyurin degenerations, and demonstrates that they are fibred over a $k$-dimensional base.

A reconstruction theorem of the following flavour was found to hold in~\cite{DHTPaper},~\cite{Kanazawa}. These cases have the property that the fibres have at most one dimensional complex moduli, and this seems to be a necessary condition to have a holomorphic gluing, forming a second part of the conjecture:

\begin{conjecture}[``Reconstruction via gluing'']
\label{ConjectureDHT} [Theorem~\ref{ReconstructionByGluing}]
  Suppose that the fibres of $\widecheck{\frZ}_0 \to \AA^1$ have one dimensional complex moduli. Then the Calabi-Yau $\widecheck{\frX_{gen}}$ is obtained by gluing the LG models $\widecheck{\frZ}_0$ to $\widecheck{\frZ}_1$ along the fibres over $\CC^* \subset \CC$ under a deformation of the map $t \mapsto t^{-1}$.

\end{conjecture}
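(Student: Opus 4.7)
The plan is to exploit the fibration structure from Theorem~\ref{DegenerationsToFibrations} by restricting it to the two standard affine charts of $\PP^1$. I would start from the Calabi-Yau fibration $\pi: \widecheck{\frX_{gen}} \to \PP^1$ produced there, and cover $\PP^1 = U_0 \cup U_\infty$ by two affine lines with transition $t \mapsto t^{-1}$ on $\CC^*$. Each restriction $\pi^{-1}(U_i) \to U_i \cong \AA^1$ is then a family of Calabi-Yau varieties whose generic fibre is mirror to $\frD$, and the statement splits into (a) identifying $\pi^{-1}(U_i)$ with the LG model $\widecheck{\frZ}_i$, and (b) showing that the transition between these two identifications on the overlap is a deformation of $t \mapsto t^{-1}$.

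For part (a), I would trace through the Gross-Siebert construction used in the proof of Theorem~\ref{DegenerationsToFibrations}. Tropically, the base $\PP^1$ arises as the dual segment of the two components $\frZ_0 \cup_{\frD} \frZ_1$, and cutting off the vertex corresponding to $\frZ_1$ leaves precisely the tropical data that feeds the intrinsic mirror of the pair $(\frZ_0, \frD)$. The scattering diagram for the global mirror should therefore restrict, over the relevant half, to the scattering diagram governing $\widecheck{\frZ}_0$, so that $\pi^{-1}(U_0)$ agrees (up to rescaling the base coordinate) with the rank-$1$ LG model $\widecheck{\frZ}_0$; a symmetric argument applies to $U_\infty$ and $\widecheck{\frZ}_1$.

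For part (b), the overlap carries two families with matching total space, giving two maps $\CC^* \to \mathrm{Mod}(\text{fibre})$ into the complex moduli space of a fibre. By hypothesis this moduli is one-dimensional, so each map is locally determined by its behaviour near a single point. Since $t=0$ on the $U_0$-side and $t=\infty$ on the $U_\infty$-side both correspond to the large complex structure limit associated to the divisor $\frD$ in the original degeneration, the transition is forced to be a deformation of the inversion $t \mapsto t^{-1}$; the corrections to the naive inversion are exactly the wall-crossing contributions of the Gross-Siebert scattering diagram along the equator of $\PP^1$.

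The main obstacle will be part (a), the identification of a restriction of a global mirror with a separately constructed LG mirror. Two a priori different tropical inputs --- the intersection complex of the central fibre of $\frX$ on one side, and the dual cone complex of the pair $(\frZ_i, \frD)$ on the other --- must be matched, and the global consistent structure shown to restrict to the local one on each sub-fan. Once this restriction principle is established, the one-dimensional moduli hypothesis rigidifies the residual freedom in the gluing and produces the stated normal form; without it the period map has no reason to recover a pair of identifications from its image, which is why the hypothesis on the fibre moduli is essential to the conclusion.
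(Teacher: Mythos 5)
Your plan runs directly into the obstruction the paper was written to avoid, and the step you flag as ``the main obstacle'' is not just hard but is the step the authors explicitly declare they cannot currently carry out. Your part~(a) asks for a restriction principle: that the scattering diagram underlying the global mirror $\widecheck{\frX_{gen}}$, when restricted over one half of the tropical base segment, equals the scattering diagram governing the independently-constructed LG model $\widecheck{\frZ}_0$. But punctured tropical curves contributing walls on the $\frZ_0$-side of the dual intersection complex can have components that wander off into the $\frZ_1$-region; their contributions do not decouple, so there is no naive restriction of consistent structures. The authors say as much in the introduction (``there are quantum corrections that occur during the smoothing procedure which we cannot control'') and again in the comments on restrictions (the intrinsic construction proves the mirror is fibred but ``it is not immediately clear what it is fibred by''). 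The dgLa/BV-algebra analysis in Section~2 is offered precisely as a substitute for a canonical scattering diagram on the sub-algebra $\Theta_{\widecheck{\frX}\langle\widecheck{\frD}\rangle}$, which the paper remarks ``would be a fundamental development to define.'' So as written, your part~(a) is a genuine gap, not merely a step to be filled in later.

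The paper's actual route is different in kind. Rather than matching two mirror constructions term-by-term, it develops a relative deformation theory for the LG fibration over an affine base $\frB\subset\GG_m$: a deformation functor $F$ for divisorial deformations of the fibration, Schlessinger-type axioms, and a deformation/obstruction calculus with $\Theta_{\frY/\frB^\dagger}$. The surjectivity hypothesis $H^1(\Theta_{\widecheck{\frX}_0})\twoheadrightarrow H^1(\Theta_{\widecheck{\frD}_0})$ is used to show that the family $\widecheck{\frX}\to\Spec k[\![P]\!]$ induces a \emph{versal} relative family over some $\frB$. The one-dimensional-moduli hypothesis then enters as you anticipate, but in a much more concrete guise: a formal inverse function theorem is applied to normalize the induced map $\frB\times\Spec k[\![t]\!]\to\frB\times\AA(H^1(\Theta_{\widecheck{\frX}}))$ so that generators go to pure powers of $t$ (this is what makes the transition a deformation of $t\mapsto t^{-1}$ rather than something uncontrolled). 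Finally, the open embedding of $\frW^0$ into $\widecheck{\frX}$ is obtained by gluing along a common dense open and invoking versality of $\widecheck{\frX}$ for divisorial deformations. In short, the paper sidesteps the comparison of two intrinsic mirror constructions by constructing the embedding from one side only, via versality, and never needs the statement that your part~(a) requires. If you want to pursue your route, you would first need to build a consistent scattering theory whose coefficients live in $\bigwedge\Theta_{\widecheck{\frX}\langle\widecheck{\frD}\rangle}$ and prove a gluing/restriction theorem for it; absent that, the proposal does not close.
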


This conjecture has geometric interpretations on the level of periods for Calabi-Yau complete intersections in toric varieties using techniques from~\cite{DualPolyhedraAndMirrorSymmetryForCalabiYauHypersurfacesInToricVarieties}. As stated the conjecture is agnostic of how we construct the mirror, one typically uses the Batyrev-Borisov construction of~\cite{BatyrevBorisov} or the Hori-Vafa construction of~\cite{HoriVafa}. We will use an older version of the Gross-Siebert program, which assumes the existence of a toric degeneration:

\begin{thm}

  Suppose that $\frX \to \AA^1$ is a simple toric degeneration. Then the mirror to $\frX_{gen}$ is given by an explicit log smoothing of a combinatorially determined central fibre. It is a smoothing of a singular space over a monoid ring of effective curve classes in $\frX$.
  
\end{thm}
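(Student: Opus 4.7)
The plan is to follow the structure of the Gross-Siebert program, specifically the original construction via scattering diagrams. The starting data is the simple toric degeneration $\frX \to \AA^1$. First I would extract its dual intersection complex: an integral affine manifold with singularities $(B, \cP)$ whose polyhedral cells biject with the toric strata of the central fibre, whose affine charts record the normal bundle data along strata, and whose singular locus lies in codimension two. The hypothesis that the degeneration is \emph{simple} guarantees that the local monodromy at each singular point matches one of the standard Gross-Siebert local models, for which explicit deformations are known.

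Next, apply the discrete Legendre transform to obtain the dual affine manifold $(\check B, \check \cP)$, and from it construct the combinatorial mirror central fibre $\check{\frX}_0$ by a Mumford-type gluing of toric varieties indexed by the vertices of $\check \cP$, glued along toric strata prescribed by the face relations of $\check \cP$, together with the log structure determined by the polyhedral data. The main step is then to smooth $\check{\frX}_0$ order by order over the monoid ring of effective curve classes $\CC[\mathrm{NE}(\frX)]$. This is carried out by constructing a scattering diagram on $\check B$: initial walls are attached to the codimension-two singularities with wall functions prescribed by the local models, carrying monomial exponents valued in $\mathrm{NE}(\frX)$, and one then runs the Kontsevich-Soibelman algorithm to extend to a consistent diagram to all orders.

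The hard part will be verifying consistency of the scattering and, crucially, ensuring that all wall exponents remain in $\mathrm{NE}(\frX)$ rather than escaping into a strictly larger cone. Both rely on the tropical-algebraic correspondence of Nishinou-Siebert, which identifies each new scattering contribution with the class of a tropical curve on $B$, and hence with an actual effective curve on $\frX$. Given consistency, the Gross-Siebert reconstruction theorem assembles the wall-crossing automorphisms into a sheaf of rings and produces a formal log smoothing $\check{\frX} \to \mathrm{Spf}\, \CC \llbracket \mathrm{NE}(\frX) \rrbracket$; positivity of the polarisation together with the existence of theta-function bases then upgrades this formal family to an algebraic smoothing over $\Spec \CC[\mathrm{NE}(\frX)]$, yielding the claimed mirror to $\frX_{gen}$.
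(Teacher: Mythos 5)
The paper does not prove this theorem: it is stated in the introduction as a recollection of established Gross--Siebert theory, citing \cite{FromRealAffine} together with \cite{LogDegenerationI} and \cite{LogDegenerationII} for the reconstruction of the mirror from the dual intersection complex, and the later work \cite{ThetaFunctions} and \cite{Intrinsic} for the upgrade to a family over a monoid ring of curve classes. Your sketch is a correct account of that machinery, so there is no gap; but it is worth being precise about which version of the program proves which part of the claim. The existence of the combinatorial central fibre and its formal log smoothing is the content of \cite{FromRealAffine}: there the consistency of the scattering diagram is established by a direct inductive and combinatorial argument in which the exponent monoid $P$ is an auxiliary, essentially arbitrary, sharp toric monoid adapted to a chosen polarisation, with no a priori identification with effective curve classes. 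The reinterpretation of wall exponents as classes in $\mathrm{NE}(\frX)$, and the corresponding control you describe via tropical curves, is the later contribution of Gross--Pandharipande--Siebert, Nishinou--Siebert, and ultimately the intrinsic construction of \cite{Intrinsic}; likewise the passage from formal to genuinely algebraic via theta functions is \cite{ThetaFunctions}. Two minor points: the construction in dimension $>2$ does not literally run the Kontsevich--Soibelman two-dimensional algorithm but its higher-dimensional generalisation with walls and slabs, where gluing data and normalisation of slab functions play an essential role you elide; and simplicity is used not only to identify the local models but to ensure that closed gluing data in the cone picture exists and that the affine manifold has the rigidity needed for the inductive step, which is why the paper emphasises the restriction to dimension $\neq 2$ where $H^2(\Sigma(\frX))=0$.
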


We define a compatibility of toric degenerations and Tyurin degenerations, and in general between toric and type $k$ degenerations with smooth total space. From this we find a fibration of a component of the mirror before we smooth. It is not clear at the moment how to simultaneously smooth both $\widecheck{\frX_0}$ and a Calabi-Yau subvariety inside it purely using the techniques of the program. The algebra map on the central fibre is clear, but there are quantum corrections that occur during the smoothing procedure which we cannot control. 

Thankfully there has been a large amount of progress in the past year on how to smooth log Calabi-Yau varieties using scattering diagrams. For us the most notable are recent advances first by Chan-Leung-Ma in~\cite{CLM} and then by Felten-Filip-Ruddat in~\cite{FFR}. These perturb the Maurer-Cartan equation in an iterative way to construct a formal, and then analytic, smoothing. The problem of smoothing complexes was studied in~\cite{CM} where the authors give a smoothing criterion for locally free complexes, which is not satisfied by our examples. Following classical deformation theory we prove the smoothability of a Calabi-Yau variety and a Calabi-Yau subvariety with trivial normal bundle by directly relating the BV-algebras used in the correction process. This also shows that if the normal bundle is not trivial then the subspace is at least stable. This result is perhaps of independent interest for constructing subvarieties tropically:

\begin{thm}[Theorem ~\ref{PairSmoothing}]

  Let $\frX$ be a log toroidal family over $\Spec k^\dagger$, and $\frD \subset \frX$ a strictly embedded complete intersection Calabi-Yau subvariety transverse to the singularities of the log structure on $\frX$. Then there are smoothings of the pair $\frX, \frD$ over $\Spec k[\![t]\!]$
  
\end{thm}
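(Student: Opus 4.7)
The plan is to adapt the Felten--Filip--Ruddat smoothing machinery of~\cite{FFR} from the single log toroidal family $\frX$ to the pair $(\frX,\frD)$, by running the entire iterative construction inside a sub-BV algebra of log polyvector fields tangent to $\frD$. The output will then be simultaneously a formal smoothing of $\frX$ over $\Spec k[\![t]\!]$ and a flat deformation of the ideal $\cI_\frD$, hence a smoothing of the pair; once the formal smoothing is produced, an analytification step identical to the one in~\cite{FFR} upgrades it to a genuine smoothing.

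The first substantive step is to set up the sub-algebra. Let $\Theta^\bullet_{\frX/k^\dagger}$ denote the sheaf of log polyvector fields and let $\Theta^\bullet_{(\frX,\frD)/k^\dagger} \subset \Theta^\bullet_{\frX/k^\dagger}$ denote the sub-sheaf of those polyvector fields that preserve $\cI_\frD$. The transversality of $\frD$ to the log singular locus, together with strictness of the embedding, force the local model to be a product of a transverse slice with a smooth complete intersection; this local product structure implies that the Thom--Whitney resolution used in~\cite{FFR} restricts to a quasi-isomorphic resolution of $\Theta^\bullet_{(\frX,\frD)/k^\dagger}$, and that the Gerstenhaber bracket preserves it. The Calabi--Yau condition on $\frX$ furnishes a log volume form and hence a BV operator $\Delta$; since $\frD$ is a complete intersection Calabi--Yau inside a Calabi--Yau, adjunction trivialises $\det \cN_{\frD/\frX}$, which is exactly what is needed for $\Delta$ to descend to the sub-algebra. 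This is the point at which triviality of (the determinant of) the normal bundle is essential; if it failed, $\Delta$ would move $\frD$ inside its linear system and one would only obtain the stability statement mentioned in the introduction, not a simultaneous smoothing.

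With this in hand one runs the FFR iteration inside the sub-BV algebra. The perturbed Maurer--Cartan equation $\bar\partial \phi + \tfrac{1}{2}[\phi,\phi] + t\,\Delta \phi = 0$ is solved order by order in $t$, starting from the Kodaira--Spencer class of the log toroidal family. Because every operation involved -- differential, bracket, BV operator -- has been arranged to preserve tangency to $\frD$, an inductive argument shows that if the $n$-th order truncation $\phi_n$ lies in $\Theta^\bullet_{(\frX,\frD)/k^\dagger}$ then so does the correction $\phi_{n+1}$. The resulting limit $\phi$ defines the desired smoothing of the pair, and the flatness of the deformation of $\cI_\frD$ is automatic because $\phi$ never leaves the sub-algebra.

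The main obstacle is the unobstructedness input. FFR prove unobstructedness for the ambient complex via a Barannikov--Kontsevich style argument: degeneration of a log Hodge-to-de Rham spectral sequence yields freeness of the Hodge filtration over the deformation parameter, and this freeness is what lets one solve the MC equation order by order. The hard part of the present proof is to establish the analogue for the pair, i.e.\ degeneration at $E_1$ of the relative Hodge-to-de Rham spectral sequence of $(\frX,\frD)$ and freeness of the corresponding filtration. Given transversality and the trivial determinant of the normal bundle, one expects this to reduce to a Deligne-style comparison between the log de Rham complex of $\frX$ and its twist by $\cI_\frD$; but checking that this comparison is compatible with the FFR iteration step by step -- so that obstructions to lifting genuinely vanish inside the sub-complex rather than merely in the ambient one -- is the technical heart of the argument.
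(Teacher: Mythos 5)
Your high-level strategy is the same as the paper's: define a sub-BV algebra of log polyvector fields tangent to $\frD$, show it is closed under the bracket and the BV operator, and run the FFR/CLM iteration inside it. That much is correct, and your observation that strictness plus transversality give the local product structure needed to carry over the Thom--Whitney resolution is also what the paper uses.

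However, there is a genuine gap exactly at the point you yourself flag as ``the technical heart,'' and your suggested route to close it does not work as stated. You propose to get degeneration of the Hodge-to-de Rham spectral sequence for the pair complex via ``a Deligne-style comparison between the log de Rham complex of $\frX$ and its twist by $\cI_\frD$.'' But $\Theta^\bullet_{\frX\langle\frD\rangle}$ is not simply a twist of $\Theta^\bullet_{\frX}$ by the ideal sheaf; and more importantly, when $\frD$ has codimension $> 1$ the graded pieces $\Theta^i_{\frX\langle\frD\rangle}$ are coherent sheaves that are \emph{not} locally free, so the Deligne--Illusie argument cannot be applied directly: the Cartier-type decomposition of $F_*\Theta^\bullet$ in that argument is obtained by dualising, and here the naive dual of a non-locally-free sheaf is replaced by a derived dual, for which the standard duality computation with $F^!\cO$ breaks down. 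The paper circumvents this by never dualising the pair complex at all. Instead it proves the Cartier decomposition $F_*\Theta^\bullet_{\frX/S} \simeq \bigoplus_i \Theta^i_{\frX'/S}[-i]$ and $F_*\Theta^\bullet_{\frD/S} \simeq \bigoplus_i \Theta^i_{\frD'/S}[-i]$ separately (both \emph{are} locally free, so Grothendieck duality with $F^!\cO = \cO$ applies), assembles these into a morphism of distinguished triangles whose third vertex is the pair complex, and deduces that $F_*\Theta^\bullet_{\frX\langle\frD\rangle/S}$ is also quasi-isomorphic to a complex with trivial differential because the other two vertical arrows are quasi-isomorphisms. Only after this indirect decomposition does the length-counting argument of Deligne--Illusie (translated from~\cite{Relevements}) apply. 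Without this triangle trick you have no Cartier isomorphism for the pair complex and hence no degeneration of the conjugate spectral sequence, so the unobstructedness input to the iteration is missing.

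One further, smaller point: you write that adjunction trivialising $\det\cN_{\frD/\frX}$ ``is exactly what is needed.'' This is not quite right. Triviality of the determinant comes for free from $\frD$ being a Calabi--Yau complete intersection inside a Calabi--Yau, so if that were the hypothesis it would be vacuous. What the paper actually assumes, and needs, is that the full normal bundle $\cN_{\frD/\frX}$ is trivial, isomorphic to $\cO_{\frD}^k$. This is what makes the sequence $0 \to \bigwedge\Theta_{\frD} \to \bigwedge\Theta_{\frX}|_{\frD} \to \bigwedge\cN_{\frD/\frX}\otimes\bigwedge\Theta_{\frD} \to 0$ split \emph{globally}, and it is what allows the Cartier decomposition of the $\frD$-side to be compared termwise with that of $\frX$ in the morphism of triangles. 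If only $\det\cN$ were trivial, the paper explicitly says one would only get that $\frD$ is stable under deformations of $\frX$, not that the pair admits a simultaneous smoothing.
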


This statement could be proved by other means, showing that the normal bundle in this case describes locally isotrivial smoothings. Our eventual goal would be to construct a scattering process that extends this result to the intrinsic construction of~\cite{Intrinsic}. This requires that we study an appropriate differential graded Lie algebra (dgLa) via this more complicated machinery. 

Finally we apply relative deformation theory to the interior of the LG models and the fibration. This process only applies on the level of formal geometry, as relative deformation theory over an affine base is not very well behaved in the algebraic or analytic categories. We prove that $\widecheck{\frX_0}$ restricted to the interior $\GG_m \subset \AA^1$ is a versal deformation space for families of $\widecheck{\frD_0}$-bundles under a surjectivity assumption that can be checked on the mirror. If $\widecheck{\frD_0}$ has one dimensional deformation space this allows us to produce an open formal embedding of $\frW_0$ into $\frX$.

In the case of $k$-Tyurin degenerations the analogous statement requires a surjectivity $H^1 (\Theta_{\widecheck{\frX}}) \to H^1 (\Theta_{\widecheck{\frD}})$. For the simple toric degenerations we consider this can be studied on the mirror.

\subsubsection*{Comments on restrictions}

There are four comments we must make about this work. Firstly we do not know how to construct a quasi-Fano variety directly from a Landau-Ginzburg model. There is the work of Prince in~\cite{Scaffolds} which has partial solutions but this does not apply in the generality we are looking for. This makes the converse of the theorem impossible to resolve using these techniques in complete generality.

On the other hand the statement that the mirror to a fibred Calabi-Yau admits a Tyurin degeneration is tractable. In this case there is a canonical affine direction on the base of the mirror, corresponding to pairing with a fibre class. Once one smooths perpendicularly to this one should obtain a log scheme which is log smooth over the standard log point, which by~\cite{LogDeformationTheory} is enough to produce a Tyurin degeneration. This requires a much stronger analysis of the local structure of partial smoothings of the log structure than appears in say~\cite{FromRealAffine}.

The third comment is that we are restricted here to working in the case where $\frX$ is not dimension two. This is to avoid a polarisation issue that is intrinsic to the geometry of $K3$ surfaces. So long as the tropicalisation $\Sigma (\frX)$ has vanishing $H^2$ every choice of open gluing data for the fan picture gives rise to a choice of gluing data for the cone picture and our construction is written directly in terms of the cone picture. A more detailed study of the $K3$ surface case and its subtleties will be performed in~\cite{K3GS}.

Since there are already subtleties of mirror symmetry for $K3$ surfaces that have not been addressed in the wider literature we pass on addressing this.

The final comment is that it is possible to prove that the mirror is fibred directly from the recent paper~\cite{Intrinsic}. Unfortunately it is not immediately clear what it is fibred by. Essentially the toric degeneration assumption here forces enough local rigidity to allow us to study the scattering diagram through other deformation theoretic means. Historically this has precedent, the work of~\cite{Intrinsic} came only after the work of~\cite{FromRealAffine}. In future work we intend to use the gluing calculations of~\cite{Punctured} and~\cite{Wu} and the period calculations of~\cite{RuddatSiebert} to prove the period gluing formulae of~\cite{tciDHT} from the perspective of tropical curves, even though we do not know a precise method to prove that fibres are mirror to $\frD$.

\subsubsection*{Acknowledgement} We thank Mark Gross, Jordan Kostiuk, Helge Ruddat, Bernd Siebert, Alan Thompson, Yixian Wu and Fenglong You for helpful discussions during the development of this project. L. J. B. was supported by Boston College. C. F. D. was supported by the University of Alberta, the Center for Mathematical Sciences and Applications at Harvard University and NSERC.

\section{From the general fibre inwards}

We begin with an adapted version of toric Calabi-Yau degenerations to provide compatibility between toric and Tyurin degenerations.

\begin{dfn}

  Let $\cS = \Spec k[\![t_0, t_1]\!]$ be the formal spectrum of a two dimensional local ring with the log structure induced by the divisors $t_0$ and $t_1$. A toric connected $k$-Tyurin degeneration over $\cS$ is a proper normal log algebraic space over flat over $\cS$ with the following properties:

  \begin{enumerate}
  \item The generic fibre $\frX_\eta$ is an irreducible normal variety.
  \item The fibre over $t_1 = 0$ is reducible and the components are in bijection with the components over a closed point $t_0 = a \neq 0$.
  \item The restriction of $\frX$ to the open subscheme $\Spec k((t_0))[\![t_1]\!]$ is a $k$-Tyurin degeneration. In particular the closed fibre of this family $\Spec k((t_0))[\![t_1]\!]/\langle t_1 \rangle$ contains a minimal smooth strata $\frD_{gen}$ of codimension $k$, the intersection of components $\frZ^0_{gen}$ up to $\frZ^k_{gen}$. These have well defined boundary divisors, which we write $\partial \frZ^i_{gen}$  
  \item The central fibre $\frX_0$ satisfies the assumptions (2) and (3) of~\cite{LogDegenerationI} Definition 4.1.
  \item The map $\frX \to \cS$ is a saturated and log smooth morphism away from a set $\frS \subset \frX$ which is relative codimension $\leq 2$ and does not contain any toric strata of the central fibre $\frX_{0}$.
  \end{enumerate}

\end{dfn}

The fact that the general fibre is a $k$-Tyurin degeneration forces a very particular structure on the log structure, at least near points of $\frD$.

\begin{lem}

  There are charts for the log structure on $\frX$ in \'etale neighbourhoods of the points of the closure of $\frD_{gen}$ of the form:

  \[ t_1 \mapsto p \quad t_2 \mapsto y_0 \ldots y_k\]

\noindent  for $p, y_0, \ldots, y_k$ generating a monoid subring and such that $y_0, \ldots , y_k$ are irreducible elements cutting out $\frZ_{gen}^0$ up to $\frZ_{gen}^k$ generating an $\NN^k$ submonoid and not dividing $p$.
  
\end{lem}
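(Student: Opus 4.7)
The plan is to apply the structure theorem for log smooth morphisms (due to K.~Kato) and then pin down the resulting chart using the $k$-Tyurin structure of the generic fibre. Condition~(5) places the non-log-smooth locus $\frS$ in relative codimension $\geq 2$ and explicitly excludes all toric strata of $\frX_0$. Since $\overline{\frD_{gen}}$ meets the central fibre in a toric stratum of codimension $k$, the closure $\overline{\frD_{gen}}$ lies entirely in the log smooth, saturated locus of $\frX \to \cS$. The structure theorem then provides, étale-locally at any $x \in \overline{\frD_{gen}}$, a chart $P \to Q$ of fs monoids modelling $\frX \to \cS$, with $P = \NN\langle t_0\rangle \oplus \NN\langle t_1\rangle$.

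The next step is to read off the generators of $Q$ and the map $P \to Q$ from the local geometry. The irreducible elements of $\overline Q = Q/Q^\times$ are in bijection with the irreducible components of the log boundary of $\frX$ through $x$. By condition~(3), the fibre over $\{t_1 = 0\}$ decomposes locally into the $k+1$ components $\frZ^0, \ldots, \frZ^k$ appearing in the $k$-Tyurin degeneration obtained after inverting $t_0$; pick irreducibles $y_0, \ldots, y_k \in Q$ cutting them out near $x$. Any remaining generator of $Q$ must correspond to an additional boundary divisor of $\frX$ through $x$, which can only be the pullback of $\{t_0 = 0\} \subset \cS$; call this generator $p$. Because $\frD_{gen}$ lives over $\{t_0 \neq 0\}$, the zero locus of $p$ cannot contain any $\frZ^i$, so $p$ is not divisible by any $y_i$. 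The map $P \to Q$ is then forced by comparing divisors: $t_1 \mapsto u \cdot y_0 y_1 \cdots y_k$ for some unit $u$ (since $\{t_1 = 0\}$ pulls back to $\bigcup_i \frZ^i$ scheme-theoretically), and $t_0 \mapsto p$ up to a unit. Absorbing these units into the $y_i$'s and $p$ after shrinking the étale neighborhood gives the claimed chart form; saturation of the morphism together with the snc structure of $\frX_0$ guarantees that $y_0, \ldots, y_k$ generate the submonoid of the expected structure with no spurious relations beyond the one enforced by $t_1 = \prod y_i$.

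The main technical obstacle is uniformity across $\overline{\frD_{gen}}$: the rank of $\overline Q$ jumps as one passes from a generic point of $\frD_{gen}$ (where $t_0$ is a unit and no $p$ appears) to a point over the origin $t_0 = t_1 = 0$ of $\cS$ (where $p$ appears as an additional generator). Showing that the number of components $\frZ^i$ meeting a point of $\overline{\frD_{gen}}$ is locally constant — using the $k$-Tyurin description of the restriction to $\Spec k((t_0))[\![t_1]\!]$ together with normality and flatness of $\frX$ over $\cS$ — is the key bookkeeping step, after which the chart takes the uniform form stated in the lemma.
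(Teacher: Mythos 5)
Your argument follows the same outline as the paper's proof: invoke log smoothness of $\frX \to \cS$ near $\overline{\frD_{gen}}$ to produce a chart over $\NN^2$, identify the irreducible elements $y_0, \ldots, y_k$ with the Tyurin components by using the smooth total space of the generic $k$-Tyurin degeneration to rule out higher multiplicities, and then argue that $p$ is not divisible by any $y_i$. The one place where you diverge is that last step. You deduce $p \nmid y_i$ from the observation that $\frZ^i_{gen}$ meets the locus $\{t_0 \neq 0\}$, so its closure cannot sit inside $\{z^p = 0\}$. The paper instead uses flatness directly: divisibility would place the entire divisor $\{z^{y_i} = 0\}$ inside the fibre over the origin of $\cS$, contradicting equidimensionality of fibres of a flat morphism. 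Both are correct and essentially interchangeable; the flatness version is slightly more robust because it does not depend on knowing in advance where $\frD_{gen}$ sits.

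Two smaller remarks. Your final paragraph treats the jump in rank of $\overline Q$ between generic points of $\frD_{gen}$ (where $p$ is a unit) and points over $t_0 = t_1 = 0$ as a significant obstacle, but the lemma is a pointwise assertion — a chart in an \'etale neighbourhood of \emph{each} point of the closure — so there is nothing to glue and no uniformity to establish; the chart is allowed to simplify over $t_0 \neq 0$. Also, condition (5) says that $\frS$ contains no toric stratum of $\frX_0$, not that $\frS$ is disjoint from them, so your reading that $\overline{\frD_{gen}}$ lies entirely in the log smooth locus mildly overstates the hypothesis. The paper makes the same implicit assumption when it says ``generically log smooth,'' so this does not create a gap relative to their proof, but it is worth being aware that the chart is being produced away from $\frS$.
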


\begin{proof}

  Since $\frX \to \cS$ is generically log smooth we can find charts: \newpage
  \begin{figure}[h]
    \centering
    \begin{tikzcd}
      \frX \arrow[bend left = 20]{rrd} \arrow[bend right = 30]{rdd} \arrow{rd}{\phi}& & \\
      & \Spec k[P] \times_{\Spec k[\NN^2]} \cS \arrow{r} \arrow{d} & \Spec k[P] \arrow[d] \\
      & \cS \arrow [r] & \Spec k[\NN^2]
    \end{tikzcd}
  \end{figure}

  \noindent where the map $\NN^2 \to P$ is an inclusion, $\phi$ is smooth and $t_1$ and $t_2$ correspond to the monomials $(1,0)$ and $(0,1)$ respectively and $\cS \to \Spec k[\NN^2]$ is smooth. Suppose that we can write $f^\# (t_1)$ as a sum of irreducible elements $\sum a_i p_i$. These irreducible elements give components of the fibre over $t_2 \neq 0$, with $a_i$ being the generic length of the subscheme $t_i = 0$ over each component. But we assumed that on this locus the degeneration had smooth total space, which enforces the above description of the monoid map, where $y_i$ is the monomial $z^{p_i}$.

  Now if $p$ were divisible by any $p_i$ then there would be an entire divisor contained in the fibre over $t_1 = t_2 = 0$. But by assumption $f$ is flat. The ring elements $y_i$ are the pullbacks of $p_i$.
  
\end{proof}

\begin{dfn}

We set $\frD$ to be the subscheme of $\frX$ obtained locally by the equations $y_i = 0$ for all $i$. We write $\frZ^i$ for the subscheme obtained locally by the equation $y_i = 0$.

\end{dfn}

We need to further restrict, so that the central fibre has degenerate properties similar to a Tyurin degeneration. For us this means that $\frD$ is a normal integral scheme, and this is satisfied so long as $\frX$ has property $S_{k+1}$ near $\frD$, in particular if the total space of the degeneration is Cohen-Macaulay since we know $\frD$ is $R_0$ and $R_0$ plus $S_1$ implies reduced. This is also satisfied for our examples arising as complete intersections in toric varieties since toric varieties are always Cohen-Macaulay.

\begin{dfn}
  
  We say that $\frX$ is a toric $k$-Tyurin degeneration if the induced toric degeneration of $\frD$ has normal integral total space and the Tyurin singular locus $\frD$ is transverse to the singular locus of the total space. We say that $\frZ^i$ is a \emph{toric degenerating component} if the induced degeneration of $\frZ^i$ is toric.
  
\end{dfn}

Many of the schemes appearing here are Cohen-Macaulay where being normal is equivalent to being regular in codimension one, which is substantially easier to check, we will use this observation without mention.

Taking the fibre of $\frX$ over the closed subset $t_0 = a \: t_1$ for generic choice of $a$ (which we may take to be 1) we obtain a toric degeneration in the sense of~\cite{LogDegenerationI} Definition 4.1 over a DVR. We write this degeneration $\frX_\Delta$. We can also extract from this the data of a toric degeneration of the components of the Tyurin degeneration.

\begin{lem}

  The scheme $\frD$ is a toric degeneration under the induced maps to $V (t_0 - a \: t_1)$.
  
\end{lem}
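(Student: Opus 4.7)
The plan is to take the \'etale-local chart for $\frX$ produced in the previous lemma, restrict it to the subscheme $\frD=V(y_0,\ldots,y_k)$, and verify directly the axioms of Definition~4.1 of~\cite{LogDegenerationI}. In the chart $t_0\mapsto p$, $t_1\mapsto y_0\cdots y_k$, the defining equations of $\frD$ force $t_1=y_0\cdots y_k=0$, so the structural map $\frD\to\cS$ factors through $V(t_1)\cong\Spec k[\![t_0]\!]$. Under the abstract identification of formal DVRs $V(t_1)\cong V(t_0-a\,t_1)$, this gives the induced map appearing in the statement.

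I would first extract an \'etale monoid chart for $\frD$ by quotienting the chart monoid of $\frX$ by the ideal generated by $y_0,\ldots,y_k$. Because these $y_i$ generate a free submonoid of $P$ and $p$ is not divisible by any $y_i$, the class of $p$ in the quotient is a non-zero-divisor, and the resulting map $\frD\to\Spec k[\![t_0]\!]$ is flat. Properness is inherited from $\frX\to\cS$, and the normality and integrality of the total space are part of the standing hypotheses on a toric $k$-Tyurin degeneration; the generic fibre is $\frD_{gen}$, which is irreducible by the same hypotheses.

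Next I would identify the central fibre $\frD_0=\frD\cap\frX_0$ and check the combinatorial conditions (2) and (3) of Definition~4.1. Locally $\frD_0$ is cut out in $\frX_0$ by the torus-invariant ideal $(y_0,\ldots,y_k,p)$, which in every local toric chart of $\frX_0$ selects a single face of the fan. Its irreducible components are therefore precisely the toric substrata of $\frX_0$ contained in $\bigcap_i\frZ^i$, and the combinatorial gluing between them is the restriction of the gluing on $\frX_0$; the required axioms for $\frD_0$ then descend from the corresponding axioms for $\frX_0$, which are assumed.

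The main obstacle will be the log smoothness criterion: I need $\frD\to V(t_0-a\,t_1)$ to be saturated and log smooth away from a subset of relative codimension at least two. The ambient family is log smooth away from $\frS\subset\frX$, which has relative codimension at most two and contains no toric stratum of $\frX_0$, while the definition of a toric $k$-Tyurin degeneration includes the transversality of $\frD$ to the singular locus of $\frX$. Together these facts ensure that $\frS\cap\frD$ has codimension at least two in $\frD$ and meets no toric stratum of $\frD_0$; applying the log smoothness of the restriction of the \'etale chart to the sub-face $V(y_0,\ldots,y_k)$ over the induced base then completes the verification.
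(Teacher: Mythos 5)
There is a genuine gap in your treatment of conditions (2) and (3) of~\cite{LogDegenerationI} Definition 4.1. You assert that ``the required axioms for $\frD_0$ then descend from the corresponding axioms for $\frX_0$,'' but condition (2) concerns the normalisation of the central fibre and how the normalisation pieces glue along their toric boundaries, and the normalisation of a closed subscheme is \emph{not} in general related in any simple way to the normalisation of the ambient space. This is precisely the nontrivial content of the lemma. The paper's proof handles it by passing one $y_i$ at a time, invoking the universal property of normalisation (Stacks Project 035Q) to identify $\nu(\frZ_1)$ with $\coprod_{j\in J}X_{\Xi_j}$, analysing the conductor locus and showing that it remains reduced with the conductor map generically two-to-one, and verifying that the relevant square is simultaneously Cartesian and co-Cartesian. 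Simply saying the gluing ``is the restriction'' does not establish that the normalisation map and conductor structure restrict as required.

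Likewise, condition (3) does not descend for free: the Gorenstein/dualizing-sheaf condition is not inherited by arbitrary subschemes of a Gorenstein scheme. The paper's argument is that $\frD_0$ is a complete intersection inside a Gorenstein scheme, hence Gorenstein by a result on Gorenstein fibrations, and the triviality of the dualizing sheaf then follows from adjunction applied along the chain $V(y_1)\supset V(y_1,y_2)\supset\cdots\supset\frD_0$. Your flatness observation (that $p$ remains a non-zero-divisor after killing $y_0,\ldots,y_k$) and your treatment of condition (4) via transversality to $\frS$ are both in the right spirit and match the paper, but without a real argument for (2) and (3) the proof is incomplete.
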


\begin{proof}

  Condition (1) of~\cite{LogDegenerationI} Definition 4.1 is trivial, the generic fibre $\frD_{gen}$ is smooth.

  To prove condition (2) we have to recall the construction and definition of the normalisation from~\cite{StacksProject}. We proceed in two steps. The first restricts to the vanishing of one of the $y_i$, which interacts with the normalisation in an essentially trivial way. The second step is then a stability under passing along a regular sequence.

  We fix $y_1$ say, and consider the associated degeneration of $\frZ_1$. The central fibre $\frX_0$ has normalisation the disjoint union of toric varieties $X_{\Xi_i}$ for $i \in I$ and the central fibre of $\frZ_i$ selects a subset $J \subset I$ of the components. The connected and irreducible components $X_{\Xi_i}$ are in bijection with the irreducible components of $\frX_0$ via a birational map with function field $K_i$. $X_{\Xi_i}$ is locally the integral closure of the corresponding component of $\frX_0$ inside the function field $K_i$.

  By the universal property of\cite[\href{https://stacks.math.columbia.edu/tag/035Q}{Lemma 035Q}]{StacksProject} the normalisation of $\frZ$ is the disjoint union $\coprod_{j \in J} X_{\Xi_j}$. The conductor locus is precisely the union of the toric Weil divisors not lying on an intersection between the components of $\frZ_1$. In particular it is reduced, the map $C_{\frZ_1} \to \nu (C_{\frZ_1})$ unramified and generically two-to-one. That the square:
      \begin{figure}[h]
    \centering
    \begin{tikzcd}
      C_{\frZ_1} \arrow{r} \arrow{d} & \prod _{j \in J} X_{\Xi_j} \arrow{d} \\
      \nu (C_{\frZ_1}) \arrow{r} & \frZ_1
    \end{tikzcd}
  \end{figure}
  
\noindent  is Cartesian follows by the description of the conductor ideals appearing in\cite[\href{https://stacks.math.columbia.edu/tag/035Q}{Lemma 035Q}]{StacksProject} and co-Cartesian since all the objects are reduced, and embed into the corresponding diagram for $\frX$. Hence it is enough to see that they have the same closed points as the desired pushout, which they do by construction.
  
Condition (3) follows from firstly the fact that $\frD_0$ is a complete intersection inside a Gorenstein scheme, hence is Gorenstein, using theorem 1 of~\cite{GorensteinFibrations}. The second statement follows from adjunction applied to the sequence of intersections $V(y_1), V (y_1, y_2), \ldots , \frD_0$. 
  
Condition (4) is precisely encoded by our assumptions on the structure of the log morphism $\frX \to \cS$.
  
\end{proof}

The final restriction that we require is that the family be simple. In the context of the Gross-Siebert program this is a monodromy requirement on the affine manifold. It constrains the monodromy polytopes associated to each strata to be elementary simplices, so the only integral points are the vertices.

\begin{dfn}

  A toric $k$-Tyurin degeneration is called \emph{simple} if the induced degeneration of $\frX$ is simple.
  
\end{dfn}

We could not answer satisfactorily if the induced degeneration of $\frD$ is necessarily also simple. If it is then we know that our construction here agrees with that appearing in~\cite{LogDegenerationI}. If it is not then we will see that it still carries a simple and generically log smooth log structure by restriction, and so we did not pursue this. All the examples we give of toric complete intersections indeed have this property.

Of course one is interested in providing examples which satisfy these conditions. The easiest examples of Tyurin degenerations of Calabi-Yau varieties are obtained as a $(k+1, 2)$ hypersurface in $\PP^k \times \PP^1$ and applying the subdivision techniques of~\cite{Shengda}. This corresponds to the polytope $(k+1)\Delta ^k \times 2 \Delta ^1$, the product of dilations of standard simplices.

\begin{eg}

  We partially follow~\cite{GSAndBB} and look for a product of MPCP desingularizations of the total family. Fix an MPCP subdivision of $(k+1)\Delta^k \subset \RR^k$ with associated strictly convex function $h_{\PP^k}$ and we implicitly pull this back to $\RR^k \oplus \RR$. On the other hand we have the piecewise linear function on $\RR$ given by $h_{\PP^1} = \max (0, x)$ where $x$ is a coordinate on the final factor, giving an MPCP desingularization of the polytope for $\PP^1$. Again we use the same notation for the pullback. The sum $h = h_{\PP^k} + h_{\PP^1} + \phi_{\PP^k \times \PP^1}$ gives a polytopal decomposition of $(k+1)\Delta ^k \times 2 \Delta ^1$ together with a strictly convex piecewise linear function on the refinement, such that $h - \phi_{\PP^k \times \PP^1}$ is nef. Rather than taking a one parameter graph to obtain a log smooth map to $\AA^1$ we take the two dimensional graph over $(h_{\PP^k}, h_{\PP^1})$, following section 4 of~\cite{Shengda} and for the same reasons outlined in example 4.2 of~\cite{LogDegenerationI} or theorem 3.10 of~\cite{GSAndBB} to obtain a generically log smooth morphism to $\AA^2$ throughout. The restriction of the two parameter degeneration to the diagonal is by construction the single parameter degeneration associated to $h$. On the dual side we choose the data of any MPCP desingularization none of whose faces have interior intersecting the hyperplane $H$ given by the vanishing of the final coordinate. This is not quite the data of an MPCP desingularization on $(k+1)\Delta ^k \times 2 \Delta ^1$, there are some ``square'' polytopes appearing.

  Nonetheless we follow the proof of simplicity given in Theorem 3.17 of~\cite{GSAndBB}, translated back to be working on $\Delta$ rather than $\nabla$. First we note that the local monodromy description the simplifies the barycentric subdivision description of the singular locus, it is supported on the set
  \[\cS ing ((k+1) \Delta^{k+1}) \times [0,1] \cup (k+1) \Delta^{k+1} \times \{ -1/2, 1/2\} \]
  In this decomposition there are four types of $\tau \in \Delta'$. The types are:

  \begin{enumerate}
  \item Faces intersecting the interior of the the top and bottom copies of $(k+1)\Delta^k$.
  \item Faces contained inside the boundary of the top and bottom copies of $(k+1)\Delta^k$.
  \item Faces of the side which are horizontal contained inside the vanishing of the final coordinate.
  \item Faces of the side which are vertical projecting to the boundary of $(k+1)\Delta^k$.
  \end{enumerate}

  The first type are locally given by an MPCP desingularization and in particular are guaranteed to be simple. The second and third types of faces contain no vertical edges, and any horizontal codimension one component can be ignored when constructing the choices of $\Omega_{i}$ since there is not any monodromy around them. In the third case this reduces the description of the monodromy to that induced by the MPCP desingularization given above, restricted to the boundary, which remains MPCP.

The second type has two types of divisors, the horizontal divisors of $(k+1) \Delta^k$ and the vertical fibres over codimension two strata in the decomposition of $(k+1) \Delta^k$. A local computation shows that the monodromy is only non-trivial for the vertical divisors contained inside the boundary, all the other loops occur entirely inside the interior of each face where there is trivial monodromy. Then we are back in the above situation.
  
The final type of faces are those which are vertical. These are not contained inside any horizontal divisor, and hence we may ignore horizontal edges in the calculation of the monodromy since they will be monodromy invariant. The new singularities that it meets have monodromy:
\[ n \mapsto n + \langle m, n \rangle \: u\]
where $u$ is a unit vector in the vertical direction and $m$ is the normal vector to the vanishing of $x$ in the $u$-direction. It is clear from this description that the associated monodromy polytopes are elementary simplices.

\end{eg}

In the case of a degeneration of a $K3$ surface embedded as an anti-canonical hypersurface inside $\PP^2 \times \PP^1$ we can draw explicitly the picture, seen in the figure below with the degenerating hyperplane dotted, and the 16 visible singularities of the 24 focus-focus singularities are marked in red. It is clear that the induced subdivision is simple, although we avoid specifically this surface case later:

\begin{figure}[h]
\centering
\tikzset{every picture/.style={line width=0.75pt}} 
\begin{tikzpicture}[x=0.75pt,y=0.75pt,yscale=-2,xscale=2]

\draw   (360.47,90.7) -- (390.07,119.5) -- (269.7,120.7) -- cycle ;
\draw    (269.7,120.7) -- (270.1,200.7) ;
\draw    (390.07,119.5) -- (390.47,199.5) ;
\draw  [dash pattern={on 0.84pt off 2.51pt}]  (310,120.6) -- (310.4,200.6) ;
\draw  [dash pattern={on 0.84pt off 2.51pt}]  (350,119.8) -- (350.4,199.8) ;
\draw  [dash pattern={on 0.84pt off 2.51pt}]  (299.3,110.3) -- (310,120.6) ;
\draw  [dash pattern={on 0.84pt off 2.51pt}]  (330.9,100.7) -- (350,119.8) ;
\draw  [dash pattern={on 0.84pt off 2.51pt}]  (299.3,110.3) -- (380.1,109.5) ;
\draw  [dash pattern={on 0.84pt off 2.51pt}]  (330.9,100.7) -- (370.5,99.9) ;
\draw  [dash pattern={on 0.84pt off 2.51pt}]  (380.1,109.5) -- (350,119.8) ;
\draw  [dash pattern={on 0.84pt off 2.51pt}]  (370.5,99.9) -- (310,120.6) ;
\draw  [dash pattern={on 4.5pt off 4.5pt}]  (271.1,159.5) -- (390.27,159.5) ;
\draw  [color={rgb, 255:red, 255; green, 0; blue, 0 }  ,draw opacity=1 ] (268.04,139.83) .. controls (268.04,138.84) and (268.84,138.04) .. (269.83,138.04) .. controls (270.82,138.04) and (271.63,138.84) .. (271.63,139.83) .. controls (271.63,140.82) and (270.82,141.63) .. (269.83,141.63) .. controls (268.84,141.63) and (268.04,140.82) .. (268.04,139.83) -- cycle ; \draw  [color={rgb, 255:red, 255; green, 0; blue, 0 }  ,draw opacity=1 ] (268.57,138.57) -- (271.1,141.1) ; \draw  [color={rgb, 255:red, 255; green, 0; blue, 0 }  ,draw opacity=1 ] (271.1,138.57) -- (268.57,141.1) ;
\draw  [color={rgb, 255:red, 255; green, 0; blue, 0 }  ,draw opacity=1 ] (388.29,140.08) .. controls (388.29,139.09) and (389.09,138.29) .. (390.08,138.29) .. controls (391.07,138.29) and (391.88,139.09) .. (391.88,140.08) .. controls (391.88,141.07) and (391.07,141.88) .. (390.08,141.88) .. controls (389.09,141.88) and (388.29,141.07) .. (388.29,140.08) -- cycle ; \draw  [color={rgb, 255:red, 255; green, 0; blue, 0 }  ,draw opacity=1 ] (388.82,138.82) -- (391.35,141.35) ; \draw  [color={rgb, 255:red, 255; green, 0; blue, 0 }  ,draw opacity=1 ] (391.35,138.82) -- (388.82,141.35) ;
\draw  [color={rgb, 255:red, 255; green, 0; blue, 0 }  ,draw opacity=1 ] (369.04,119.58) .. controls (369.04,118.59) and (369.84,117.79) .. (370.83,117.79) .. controls (371.82,117.79) and (372.63,118.59) .. (372.63,119.58) .. controls (372.63,120.57) and (371.82,121.38) .. (370.83,121.38) .. controls (369.84,121.38) and (369.04,120.57) .. (369.04,119.58) -- cycle ; \draw  [color={rgb, 255:red, 255; green, 0; blue, 0 }  ,draw opacity=1 ] (369.57,118.32) -- (372.1,120.85) ; \draw  [color={rgb, 255:red, 255; green, 0; blue, 0 }  ,draw opacity=1 ] (372.1,118.32) -- (369.57,120.85) ;
\draw  [color={rgb, 255:red, 255; green, 0; blue, 0 }  ,draw opacity=1 ] (328.09,120.1) .. controls (328.09,119.11) and (328.89,118.31) .. (329.88,118.31) .. controls (330.87,118.31) and (331.67,119.11) .. (331.67,120.1) .. controls (331.67,121.09) and (330.87,121.89) .. (329.88,121.89) .. controls (328.89,121.89) and (328.09,121.09) .. (328.09,120.1) -- cycle ; \draw  [color={rgb, 255:red, 255; green, 0; blue, 0 }  ,draw opacity=1 ] (328.62,118.83) -- (331.15,121.37) ; \draw  [color={rgb, 255:red, 255; green, 0; blue, 0 }  ,draw opacity=1 ] (331.15,118.83) -- (328.62,121.37) ;
\draw  [color={rgb, 255:red, 255; green, 0; blue, 0 }  ,draw opacity=1 ] (388.79,180.33) .. controls (388.79,179.34) and (389.59,178.54) .. (390.58,178.54) .. controls (391.57,178.54) and (392.38,179.34) .. (392.38,180.33) .. controls (392.38,181.32) and (391.57,182.13) .. (390.58,182.13) .. controls (389.59,182.13) and (388.79,181.32) .. (388.79,180.33) -- cycle ; \draw  [color={rgb, 255:red, 255; green, 0; blue, 0 }  ,draw opacity=1 ] (389.32,179.07) -- (391.85,181.6) ; \draw  [color={rgb, 255:red, 255; green, 0; blue, 0 }  ,draw opacity=1 ] (391.85,179.07) -- (389.32,181.6) ;
\draw  [color={rgb, 255:red, 255; green, 0; blue, 0 }  ,draw opacity=1 ] (268.54,179.83) .. controls (268.54,178.84) and (269.34,178.04) .. (270.33,178.04) .. controls (271.32,178.04) and (272.13,178.84) .. (272.13,179.83) .. controls (272.13,180.82) and (271.32,181.63) .. (270.33,181.63) .. controls (269.34,181.63) and (268.54,180.82) .. (268.54,179.83) -- cycle ; \draw  [color={rgb, 255:red, 255; green, 0; blue, 0 }  ,draw opacity=1 ] (269.07,178.57) -- (271.6,181.1) ; \draw  [color={rgb, 255:red, 255; green, 0; blue, 0 }  ,draw opacity=1 ] (271.6,178.57) -- (269.07,181.1) ;
\draw  [color={rgb, 255:red, 255; green, 0; blue, 0 }  ,draw opacity=1 ] (288.29,200.83) .. controls (288.29,199.84) and (289.09,199.04) .. (290.08,199.04) .. controls (291.07,199.04) and (291.88,199.84) .. (291.88,200.83) .. controls (291.88,201.82) and (291.07,202.63) .. (290.08,202.63) .. controls (289.09,202.63) and (288.29,201.82) .. (288.29,200.83) -- cycle ; \draw  [color={rgb, 255:red, 255; green, 0; blue, 0 }  ,draw opacity=1 ] (288.82,199.57) -- (291.35,202.1) ; \draw  [color={rgb, 255:red, 255; green, 0; blue, 0 }  ,draw opacity=1 ] (291.35,199.57) -- (288.82,202.1) ;
\draw  [color={rgb, 255:red, 255; green, 0; blue, 0 }  ,draw opacity=1 ] (328.49,200.1) .. controls (328.49,199.11) and (329.29,198.31) .. (330.28,198.31) .. controls (331.27,198.31) and (332.07,199.11) .. (332.07,200.1) .. controls (332.07,201.09) and (331.27,201.89) .. (330.28,201.89) .. controls (329.29,201.89) and (328.49,201.09) .. (328.49,200.1) -- cycle ; \draw  [color={rgb, 255:red, 255; green, 0; blue, 0 }  ,draw opacity=1 ] (329.02,198.83) -- (331.55,201.37) ; \draw  [color={rgb, 255:red, 255; green, 0; blue, 0 }  ,draw opacity=1 ] (331.55,198.83) -- (329.02,201.37) ;
\draw  [color={rgb, 255:red, 255; green, 0; blue, 0 }  ,draw opacity=1 ] (368.29,199.58) .. controls (368.29,198.59) and (369.09,197.79) .. (370.08,197.79) .. controls (371.07,197.79) and (371.88,198.59) .. (371.88,199.58) .. controls (371.88,200.57) and (371.07,201.38) .. (370.08,201.38) .. controls (369.09,201.38) and (368.29,200.57) .. (368.29,199.58) -- cycle ; \draw  [color={rgb, 255:red, 255; green, 0; blue, 0 }  ,draw opacity=1 ] (368.82,198.32) -- (371.35,200.85) ; \draw  [color={rgb, 255:red, 255; green, 0; blue, 0 }  ,draw opacity=1 ] (371.35,198.32) -- (368.82,200.85) ;
\draw  [color={rgb, 255:red, 255; green, 0; blue, 0 }  ,draw opacity=1 ] (288.04,121.33) .. controls (288.04,120.34) and (288.84,119.54) .. (289.83,119.54) .. controls (290.82,119.54) and (291.63,120.34) .. (291.63,121.33) .. controls (291.63,122.32) and (290.82,123.13) .. (289.83,123.13) .. controls (288.84,123.13) and (288.04,122.32) .. (288.04,121.33) -- cycle ; \draw  [color={rgb, 255:red, 255; green, 0; blue, 0 }  ,draw opacity=1 ] (288.57,120.07) -- (291.1,122.6) ; \draw  [color={rgb, 255:red, 255; green, 0; blue, 0 }  ,draw opacity=1 ] (291.1,120.07) -- (288.57,122.6) ;
\draw  [color={rgb, 255:red, 255; green, 0; blue, 0 }  ,draw opacity=1 ] (382.79,114.58) .. controls (382.79,113.59) and (383.59,112.79) .. (384.58,112.79) .. controls (385.57,112.79) and (386.38,113.59) .. (386.38,114.58) .. controls (386.38,115.57) and (385.57,116.38) .. (384.58,116.38) .. controls (383.59,116.38) and (382.79,115.57) .. (382.79,114.58) -- cycle ; \draw  [color={rgb, 255:red, 255; green, 0; blue, 0 }  ,draw opacity=1 ] (383.32,113.32) -- (385.85,115.85) ; \draw  [color={rgb, 255:red, 255; green, 0; blue, 0 }  ,draw opacity=1 ] (385.85,113.32) -- (383.32,115.85) ;
\draw  [color={rgb, 255:red, 255; green, 0; blue, 0 }  ,draw opacity=1 ] (362.93,94.85) .. controls (362.93,93.86) and (363.74,93.06) .. (364.73,93.06) .. controls (365.71,93.06) and (366.52,93.86) .. (366.52,94.85) .. controls (366.52,95.84) and (365.71,96.64) .. (364.73,96.64) .. controls (363.74,96.64) and (362.93,95.84) .. (362.93,94.85) -- cycle ; \draw  [color={rgb, 255:red, 255; green, 0; blue, 0 }  ,draw opacity=1 ] (363.46,93.58) -- (365.99,96.12) ; \draw  [color={rgb, 255:red, 255; green, 0; blue, 0 }  ,draw opacity=1 ] (365.99,93.58) -- (363.46,96.12) ;
\draw  [color={rgb, 255:red, 255; green, 0; blue, 0 }  ,draw opacity=1 ] (373.48,105.1) .. controls (373.48,104.11) and (374.28,103.31) .. (375.27,103.31) .. controls (376.26,103.31) and (377.06,104.11) .. (377.06,105.1) .. controls (377.06,106.09) and (376.26,106.89) .. (375.27,106.89) .. controls (374.28,106.89) and (373.48,106.09) .. (373.48,105.1) -- cycle ; \draw  [color={rgb, 255:red, 255; green, 0; blue, 0 }  ,draw opacity=1 ] (374,103.83) -- (376.53,106.37) ; \draw  [color={rgb, 255:red, 255; green, 0; blue, 0 }  ,draw opacity=1 ] (376.53,103.83) -- (374,106.37) ;
\draw  [color={rgb, 255:red, 255; green, 0; blue, 0 }  ,draw opacity=1 ] (318.04,104.33) .. controls (318.04,103.34) and (318.84,102.54) .. (319.83,102.54) .. controls (320.82,102.54) and (321.63,103.34) .. (321.63,104.33) .. controls (321.63,105.32) and (320.82,106.13) .. (319.83,106.13) .. controls (318.84,106.13) and (318.04,105.32) .. (318.04,104.33) -- cycle ; \draw  [color={rgb, 255:red, 255; green, 0; blue, 0 }  ,draw opacity=1 ] (318.57,103.07) -- (321.1,105.6) ; \draw  [color={rgb, 255:red, 255; green, 0; blue, 0 }  ,draw opacity=1 ] (321.1,103.07) -- (318.57,105.6) ;
\draw  [color={rgb, 255:red, 255; green, 0; blue, 0 }  ,draw opacity=1 ] (284.29,115.08) .. controls (284.29,114.09) and (285.09,113.29) .. (286.08,113.29) .. controls (287.07,113.29) and (287.88,114.09) .. (287.88,115.08) .. controls (287.88,116.07) and (287.07,116.88) .. (286.08,116.88) .. controls (285.09,116.88) and (284.29,116.07) .. (284.29,115.08) -- cycle ; \draw  [color={rgb, 255:red, 255; green, 0; blue, 0 }  ,draw opacity=1 ] (284.82,113.82) -- (287.35,116.35) ; \draw  [color={rgb, 255:red, 255; green, 0; blue, 0 }  ,draw opacity=1 ] (287.35,113.82) -- (284.82,116.35) ;
\draw  [color={rgb, 255:red, 255; green, 0; blue, 0 }  ,draw opacity=1 ] (344.29,95.58) .. controls (344.29,94.59) and (345.09,93.79) .. (346.08,93.79) .. controls (347.07,93.79) and (347.88,94.59) .. (347.88,95.58) .. controls (347.88,96.57) and (347.07,97.38) .. (346.08,97.38) .. controls (345.09,97.38) and (344.29,96.57) .. (344.29,95.58) -- cycle ; \draw  [color={rgb, 255:red, 255; green, 0; blue, 0 }  ,draw opacity=1 ] (344.82,94.32) -- (347.35,96.85) ; \draw  [color={rgb, 255:red, 255; green, 0; blue, 0 }  ,draw opacity=1 ] (347.35,94.32) -- (344.82,96.85) ;
\draw    (270.1,200.7) -- (390.47,199.5) ;

\end{tikzpicture}

  \end{figure}

This is the limit of the cases where explicit examples are tractable, and in dimension higher than two computer algebra packages would be needed to attempt any calculations. For a general refinement of a nef partition there are choices of lifts involved, we handle the symmetric blow-up case from~\cite{tciDHT} below, along with a non-symmetric example for the quintic threefold. For this we need to use an actual MPCP desingularization, but one well adapted to the choice of resolution.

\begin{thm}

  Let $P \subset M$ be a reflexive polytope with a nef partition $P = \sum_0^{n} \Delta_i$ and let $\Delta_0 = \Delta_a + \Delta_b$ be a refinement of this nef partition. Let $\tilde{P}$ be the polytope associated to blowing up $\Delta_a$ and $\Delta_b$, and $\tilde{\Delta_i}$ the associated partitions of this polytope.

  Then there is a simple toric-Tyurin degeneration associated to this data, with generic fibre a complete intersection in $P$ given by the nef partition $\tilde{\Delta}_a, \tilde{\Delta}_b, \tilde{\Delta}_1, \ldots, \tilde{\Delta}_n$.
  
\end{thm}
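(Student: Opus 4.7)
The plan is to build the two-parameter family $\frX \to \AA^2$ by a two-parameter graph construction of the kind used in the preceding example for $\PP^k \times \PP^1$, treating the refinement $\Delta_0 = \Delta_a + \Delta_b$ as the analogue of the $2\Delta^1$ factor. First I would set up the combinatorial data on the polytope $\tilde P$ equipped with the refined nef partition. The refinement singles out a distinguished coordinate $x$ on $M_\RR$ whose zero locus is a hyperplane $H$ separating $\Delta_a$ from $\Delta_b$; crucially, blowing up $\Delta_a$ and $\Delta_b$ introduces rays in the fan of $\tilde P$ so that $H$ is realised as a subcomplex of any sufficiently compatible subdivision. I would then choose an MPCP subdivision of $\tilde P$ with the property that no face of the subdivision has interior meeting $H$ transversally.

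Next I would construct the two piecewise linear functions. Let $h_{\mathrm{hor}}$ be a strictly convex function on $\tilde P$ inducing the chosen MPCP, and set $h_{\mathrm{vert}} = \max(0, x)$. Pulled back to $M_\RR \oplus \RR^{n+1}$ and added to the standard nef partition function $\phi_{\tilde P}$, the sum $h = h_{\mathrm{hor}} + h_{\mathrm{vert}} + \phi_{\tilde P}$ is a strictly convex piecewise linear function on the refinement of the Cayley polytope such that $h - \phi_{\tilde P}$ is nef. As in section 4 of~\cite{Shengda}, rather than taking the one-parameter graph I would take the two-parameter graph of $(h_{\mathrm{hor}}, h_{\mathrm{vert}})$, giving a generically log smooth morphism $\frX \to \AA^2$ whose restriction to the diagonal $t_0 = t_1$ recovers the standard one-parameter GS toric degeneration associated to $h$, and whose restriction to $t_1 = 0$ splits into two components indexed by the sign of $x$, that is by $\Delta_a$ and $\Delta_b$.

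Once the family is in hand I would verify the five conditions of the toric connected $k$-Tyurin degeneration definition in turn. Condition (1) follows from the generic CI in $\tilde P$ being a smooth normal variety. Conditions (2) and (3) are encoded directly in the sign-of-$x$ splitting above, with the intersection being the CI cut out by the refined nef partition $\tilde{\Delta}_a, \tilde{\Delta}_b, \tilde{\Delta}_1, \ldots, \tilde{\Delta}_n$. Condition (4) is inherited from the single-parameter GS toric degeneration obtained on the diagonal. Condition (5) follows from log smoothness of the two-parameter graph away from codimension two toric strata, as in Example 4.2 of~\cite{LogDegenerationI} or Theorem 3.10 of~\cite{GSAndBB}. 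For simplicity I would mimic the case analysis of the previous example, classifying faces of the subdivision into four types according to whether they lie in the interior or boundary of the top and bottom copies of the MPCP and whether their side portion is horizontal or vertical; in each case one shows that the induced monodromy polytope is an elementary simplex. The vertical side contributions are again of the standard focus-focus form $n \mapsto n + \langle m, n \rangle u$.

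The hard part is choosing the MPCP desingularization of $\tilde P$ compatibly with the separating hyperplane $H$: if one picks a generic MPCP, then $H$ cuts through interiors of top-dimensional cells and the fibre over $t_1 = 0$ fails to split cleanly, breaking conditions (2) and (3). The blow-up hypothesis is what guarantees existence of such a compatible MPCP, and this is the geometric translation of the nef partition refinement. A secondary subtlety, already flagged in the remarks preceding the theorem, is that for non-symmetric refinements one cannot take a product MPCP on $\tilde P$ and must instead build one by hand; the examples for the symmetric blow-up case of~\cite{tciDHT} and the quintic threefold would be handled by giving the explicit subdivisions, with the simplicity analysis above going through unchanged since only the local structure near each singular point of the discriminant matters.
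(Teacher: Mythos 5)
Your outline is right in overall shape --- a two-parameter graph over a horizontal and a vertical piecewise linear function, then a case analysis on face types to check simplicity --- but the foundational geometry is set up in the wrong space. You place the hyperplane $H$ inside $M_\RR$, claiming the refinement $\Delta_0 = \Delta_a + \Delta_b$ singles out a coordinate $x$ on $M_\RR$ whose zero locus separates $\Delta_a$ from $\Delta_b$. But $\Delta_a$ and $\Delta_b$ are Minkowski \emph{summands} of $\Delta_0$, not pieces of a subdivision; in a reflexive nef partition each (a translate of each) contains the origin and they overlap, so no hyperplane in $M_\RR$ separates them and the refinement picks out no distinguished direction in $M$. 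The paper resolves this by adjoining a fresh coordinate: it embeds the total space into the toric variety of $P \times [-1,1] \subset M \oplus \RR$, sets $\tilde{\Delta}_a = \Delta_a \times [0,1]$, $\tilde{\Delta}_b = \Delta_b \times [-1,0]$, $\tilde{\Delta}_i = \Delta_i \times \{0\}$ for $i \geq 1$, and takes $H$ to be the vanishing of the new coordinate; the vertical function $\max(0,\cdot)$ is in that new variable. As written, your vertical direction is tangent to $P$, so the fibre over $t_1 = 0$ does not decompose along a torus-invariant locus and conditions (2)--(3) of the definition of a toric $k$-Tyurin degeneration cannot be verified.

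Two further steps are missing. First, you assert that the blow-up hypothesis guarantees an MPCP of $\tilde P$ whose maximal cells avoid $H$, with no mechanism. The paper's mechanism is explicit and does not use the blow-up: start from any MPCP $(\Sigma', h)$, take the Minkowski sum of the dual polytope of $h$ with a large multiple of the segment $\{(0,1),(0,-1)\}$ to get a new strictly convex function whose subdivision has $H$ as a subcomplex, then perturb --- first horizontally along the vertical edges, using~\cite{DualPolyhedraAndMirrorSymmetryForCalabiYauHypersurfacesInToricVarieties}, then the remaining points --- to make the triangulation elementary. Second, your verification omits the transversality of $\frD$ to the discriminant, which in the paper is obtained from the fact that the functions $\phi_*$ (maxima over points of the dual polytopes) are linear across $H$ and so carry no monodromy there. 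Once the construction is moved to $M \oplus \RR$ and these two points are filled in, your face-by-face simplicity check is the right argument, with the vertical-side monodromy focus-focus as you say.
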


\begin{proof}
  
  The degeneration may be embedded into the total space associated to $P \times [-1,1]$ embedded inside $M \oplus \RR$ and we write $H$ for the hyperplane given by the vanishing of the final coordinate. The associated refined polytopes $\Delta_i$ are the trivial product $\Delta_i \times \{ 0 \}$ except for $\tilde{\Delta}_a, \tilde{\Delta}_b$ and $\tilde{\Delta}_0$. The polytope $\tilde{\Delta}_a$ is the product $\Delta_a \times [0,1]$ with $\tilde{P}$ while $\tilde{\Delta_b}$ is the product $\Delta_b \times [-1, 0]$. The intersection with $H$ gives a reflexive polytope of dimension one lower, in which the degeneration for $\frD$ is embedded.

  Choose an MPCP desingularization $(\Sigma', h)$ of which none of the interiors of the maximal polytopes intersect $H$. This can be obtained by taking any MPCP desingularization and taking the Minkowski sum of the dual polytope to $h$ and a large multiple of the degenerate polytope $\{(0,1), (0,-1)\}$. This produces a new strictly convex function on a new refinement of $\Sigma$, $\tilde{\Sigma}$. There are vertical edges under the projection to the final factor and a small perturbation horizontally of these ensures that $\tilde{\Sigma}$ is an elementary triangulation of the intersection with $H$ as proved in~\cite{DualPolyhedraAndMirrorSymmetryForCalabiYauHypersurfacesInToricVarieties}. A perturbation of the points not lying as end points of the vertical edges then ensures that the rest of the triangulation is elementary. We replace $h$ and $\Sigma'$ by these choices.

  Then we form the two parameter family given by taking the graph over $h'$ constructed in~\cite{GSAndBB} and $h_{\PP^1}$. This gives a log smooth degeneration of the total space. Restricting this to the diagonal we obtain the MPCP desingularization given by $h' + h_{\PP^1}$. The construction of the complete intersection is modified to produce a semi-stable degeneration given by $h'$ as done in section 4 of~\cite{Shengda}. The induced two parameter family then has the above toric degeneration properties by the same arguments that the one parameter family does. The induced degeneration of $\frD$ can already be seen to be simple, induced by the toric degeneration obtained by intersecting the above data with the hyperplane $H$. The MPCP desingularization restricts on each side of $H$ to give compatible MPCP desingularizations. 

  The piecewise linear functions $\phi_*$ are defined to be the maximum over the points in the corresponding dual polytope. By construction these are linear across $H$ and so there is no monodromy across these faces. Thus the Tyurin singular locus $\frD$ is transverse to the singular locus of the degenerating family.

\end{proof}

\begin{eg}

  Let $Q_t \subset \PP^4$ be a family of generic quintic threefolds degenerating to the union of a degree $i$ and a degree $5-i$ hypersurface. We construct the family produced by blowing up the degree $i$ hyperplane. 

  Recall from~\cite{Shengda} the construction of degenerations of toric varieties given by subdividing polytopes. The polytope $\Delta$ for $\PP^4$ is of course given by the convex hull of the columns of the following matrix contained in the vector space spanned by $u_1, \ldots, u_4$:

  \[  \begin{bmatrix}
-1 & & 4 & &-1 & &-1 & &-1 \\
-1 & &-1 & & 4 & &-1 & &-1 \\
-1 & &-1 & &-1 & & 4 & &-1 \\
-1 & &-1 & &-1 & &-1 & & 4 \\
  \end{bmatrix} \]

  We subdivide this polytope along the hyperplane $H$ given by $u_1 = i-1$ and take the piecewise linear function $\min (0, i - 1 - u_i)$. The associated degeneration of the quintic is into a degree $5-i$ hypersurface and the blow up of a degree $i$ hypersurface, recalling the description of the fan for the total space of a projective bundle from Example 7.3.5 of~\cite{CoxLittleSchenk} and the classification of section 4.2 of~\cite{Shengda}. It carries an embedding into a non-compact toric variety with polytope $\tilde{\Delta}$. The polytope $\tilde{\Delta}$ further carries a piecewise linear function $\phi$ pulled back from $\Delta$ which generically fibrewise gives a quintic hypersurface inside $\PP^4$. One can use an auxiliary choice of toric compactification and the proof of~\cite{DualPolyhedraAndMirrorSymmetryForCalabiYauHypersurfacesInToricVarieties} to construct an MPCP desingularization of $\Delta$ whose non-linear locus contains $H \cap \partial \Delta$. This condition ensures that it pulls back to give an MPCP desingularization of the bottom face of $\tilde{\Delta}$, though possibly with singularities along the side. As in the previous two examples gives a two parameter family log smooth over $\AA^2$, where one factor is given by the new coordinate introduced by Hu's construction (section 4.1.2 of~\cite{Shengda}) and the other factor is the toric degeneration parameter (the section after Observation 3.9 of~\cite{GSAndBB}). Since $\phi$ is linear across $H$ the Tyurin singular locus is transverse to the singularities of the total space. It is a toric-Tyurin degeneration for the same reasons that the one parameter family is a toric degeneration. The corresponding degeneration of $\frD$ is given by restricting the polytope to the preimage of $H$, in particular it is a toric degeneration.

\end{eg}

  Note that this is a slightly different construction from the one appearing in~\cite{tciDHT} which makes explicit the target in which the degeneration is embedded. The above example also produces torically degenerating components of the two components of the Tyurin degeneration, fitting into the examples considered in our third section.

Examples of toric $k$-Tyurin degenerations can be constructed as a $(k+1, 2, \ldots 2)$ hypersurface in $\PP^k \times \PP^1 \times \ldots \times \PP^1$, or as more complicated subdivisions arising in the above example. It seems to be an essentially hard problem to tell if a given Tyurin degeneration, connected in moduli to a large complex structure limit point, admits such a toric model. This is similar to the case for toric degenerations arising in the Gross-Siebert program more generally.

We want to deduce statements via tropical geometry, and so we need to recall the definition of the dual intersection complex and tropicalisation.

\begin{dfn}

  Let $\frX$ be a log scheme. Let $\cC$ be the category whose objects are in bijection with the log strata of $\frX$ and whose morphisms are given by inclusions of strata. There is a functor sending each object to the dual cone of the stalk of $\overline{\cM}_\frX$ over the corresponding strata and each morphism to the corresponding inclusion of faces. The cone complex $\Sigma_{\cC} (\frX)$ is defined to be the complex obtained by this gluing data.

  Given a map $\frX \to \Spec k^\dagger$ there is an induced map $\Sigma_\cC (\frX) \to \RR_{\geq 0}$ and the \emph{tropicalisation} or \emph{dual intersection complex}, $\Sigma(\frX)$ is defined to be the fibre over $1$. At the moment it only carries an affine structure on the interior of each cell. To provide an affine structure away from a set of codimension two we equip this with a fan structure around each zero-dimensional strata using the corresponding toric variety as done in the proof of Proposition 4.10 of~\cite{LogDegenerationI}.

  From this data and a choice of closed gluing data in the cone picture (see ~\cite{LogDegenerationI} Definition 2.3) there is a projective scheme obtained by gluing spectra of the affine tangent spaces, as outlined in~\cite{FromRealAffine}. We will write this $\Proj k [\Sigma (\frX)]$. 
  
\end{dfn}

We can identify an embedding of dual intersection complexes of $\Sigma(\frD)$ into the dual intersection complex $\Sigma (\frX_\Delta)$ and by doing so build an explicit surjection on the level of rings.

\begin{lem}
\label{Localembedding}
  There is a closed embedding of $\Sigma (\frD)$ into $\Sigma (\frX)$ inducing a surjection on integral tangent spaces and compatible with the fan structures at zero-dimensional points of $\Sigma(\frD)$. Further for any toric degenerating component $\frZ$ of the Tyurin degeneration there is an affine linear open embedding of an open subset of $\Sigma (\frZ, \partial \frZ)^\circ $ into $ \Sigma(\frX_{\Delta})^\circ$ compatible with the fan structure around each point of $\Sigma (\frZ, \partial \frZ)$. 
  
\end{lem}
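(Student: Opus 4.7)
The plan is to reduce everything to a chart-by-chart analysis based on the explicit charts from the previous lemma, then assemble the identifications into a global embedding, and finally verify compatibility with the fan structures at the zero-dimensional strata.

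First I would establish a bijection between log strata of $\frD$ and log strata of $\frX$ contained in $\frD$. By the previous lemma, in an \'etale neighbourhood of any point of $\frD$ the subscheme $\frD$ is cut out by $y_0 = \cdots = y_k = 0$, where the $y_i$ are irreducible generators of the log structure; so $\frD$ is a strict closed subscheme of $\frX$ and carries the pulled-back log structure. Consequently $\overline{\cM}_\frD$ agrees with $\overline{\cM}_\frX$ at each point of $\frD$, the log strata of $\frD$ are exactly those log strata of $\frX$ supported in $\frD$, and the dual cones assigned to these strata in the cone-complex construction coincide whether computed for $\frD$ or for $\frX$. Closure of this set of strata under specialization implies that the resulting family of cones is closed under the face relations that appear in $\Sigma_\cC(\frX)$.

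Second I would assemble these local identifications into an embedding of cone complexes $\Sigma_\cC(\frD) \hookrightarrow \Sigma_\cC(\frX)$ sending each cone of the domain isomorphically onto the corresponding cone of the codomain, and then slice by the map to $\RR_{\geq 0}$ induced by the composite $\frX \to \cS \to \Spec k^\dagger$ to produce the closed embedding $\Sigma(\frD) \hookrightarrow \Sigma(\frX)$. Topologically the image is the union of the cells of $\Sigma(\frX)$ corresponding to the strata of $\frX$ supported in $\frD$. The claimed surjection on integral tangent spaces has the following content: at an interior point $p$ of a cell of $\Sigma(\frD)$, the integral tangent lattice of $\Sigma(\frX)$ computed from the dual lattice of $\overline{\cM}_{\frX,\sigma}^{gp}$ is naturally identified with the integral tangent lattice of $\Sigma(\frD)$ computed from the same stalk, giving a surjective map of integral tangent lattices. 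Compatibility with the fan structures at zero-dimensional points of $\Sigma(\frD)$ follows from the construction of the fan structure recalled in Proposition 4.10 of~\cite{LogDegenerationI}: such a point corresponds to a maximal stratum of $\frD$, which is simultaneously a log stratum of $\frX$, and the local toric model used to define the fan is intrinsic to that stratum.

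Finally, for the toric degenerating components, I would run the same argument for each toric degenerating component $\frZ$ of the Tyurin degeneration, working with the pair $(\frZ, \partial \frZ)$ inside $\frX_\Delta$. Away from the self-intersection of $\frZ$ in $\frX_\Delta$, a stratum of $(\frZ, \partial \frZ)$ is a stratum of $\frX_\Delta$ that lies on $\frZ$ but not on $\partial \frZ$, and a chart argument identical to the one above produces an affine-linear open embedding $\Sigma(\frZ, \partial \frZ)^\circ \hookrightarrow \Sigma(\frX_\Delta)^\circ$ on the corresponding open subset, compatible with fans at zero-dim points. The main obstacle I anticipate is the rigorous verification of fan compatibility at zero-dimensional points in the sense of~\cite{LogDegenerationI}: this requires exploiting the transversality of the Tyurin singular locus $\frD$ to the singular locus of the total space, which is guaranteed by the toric $k$-Tyurin hypothesis but must be unpacked in terms of the explicit toric charts underlying the Gross-Siebert cone picture, so that the choices made globally on $\Sigma(\frX)$ (or $\Sigma(\frX_\Delta)$) restrict consistently to the embedded subcomplex.
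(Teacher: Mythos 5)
Your proposal contains a significant gap at its core: the claim that $\Sigma(\frD)$ embeds into $\Sigma(\frX)$ as a union of cells, with cones of $\Sigma_\cC(\frD)$ mapping onto cones of $\Sigma_\cC(\frX)$. It is true that the ghost monoid stalks coincide by strictness, so each cone $\operatorname{Hom}(\overline{\cM}_{\frD,\sigma},\RR_{\geq 0})$ literally equals the corresponding cone $\operatorname{Hom}(\overline{\cM}_{\frX,\sigma},\RR_{\geq 0})$ for a stratum $\sigma \subset \frD$. But the tropicalizations $\Sigma(\frD)$ and $\Sigma(\frX)$ are obtained by slicing this \emph{same} cone by \emph{different} linear functionals. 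In the local chart of the previous lemma, $\Sigma(\frD)$ is the slice $\langle p, \cdot\rangle = 1$ (the degeneration parameter for $\frD$ is $t_1 \mapsto p$), while $\Sigma(\frX)$ is the slice where both $\langle p, \cdot\rangle$ and $\sum\langle y_i, \cdot\rangle$ equal one. These are not nested as a cell and its face: the slice for $\frD$ is cut transversally. Concretely, a vertex of $\Sigma(\frD)$ is dual to a component of $\frD_0$, which is a codimension-$k$ intersection of components of $\frX_0$, so the corresponding cell of $\Sigma(\frX)$ is $k$-dimensional rather than a vertex. The vertex of $\Sigma(\frD)$ must therefore land in the \emph{interior} of a $k$-cell of $\Sigma(\frX)$, not at a vertex, and so the image of $\Sigma(\frD)$ cannot be a union of cells. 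The paper's proof makes precisely this point: near $\Sigma(\frD)$ the tropicalization $\Sigma(\frX)$ fibres over a $(k+1)$-dilation of a $k$-simplex $\Delta_x$ via $\xi \mapsto (\langle y_0,\xi\rangle,\ldots,\langle y_k,\xi\rangle)$, and $\Sigma(\frD)$ is the fibre over the barycentre $(1,\ldots,1)$. It is this fibration, not a subcomplex inclusion, that produces the closed embedding and the surjection on integral tangent spaces; your identification of the two tangent lattices would give an isomorphism, not a surjection, and does not account for the $k$ extra directions transverse to $\Sigma(\frD)$ inside the cell.

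The same confusion affects the second half of your argument. For a toric degenerating component $\frZ$ you propose to ``run the same argument,'' but the paper's treatment of $\frZ$ is a genuinely different mechanism: here $\Sigma(\frZ, \partial\frZ)$ and $\Sigma(\frX_\Delta)$ have the \emph{same} dimension, and the map is built from a canonical inclusion of monoids $P_\frZ \hookrightarrow P_{\frX_\Delta}$ that identifies $2p$ with $\sum y_i + p$ (crucially using $\frX_\Delta$ rather than $\frX$). The resulting dual map between cones is then shown to be a local isomorphism by a dimension-and-continuity argument and a comparison of fan structures, extended by compactness to an open neighbourhood of $\Sigma(\partial\frZ)$. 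There is no fibration picture here, and no cell of lower dimension being centrally embedded; the argument is not a rerun of the $\frD$ case. Finally, the compatibility with fan structures at zero-dimensional points of $\Sigma(\frD)$ is more delicate than your sketch suggests: these points are not vertices of $\Sigma(\frX)$, so there is no fan structure on $\Sigma(\frX)$ at that point to compare against directly, and the compatibility must be extracted from the affine-linear description of $\rho_\Sigma$ together with the triviality of the monodromy on the ordering of $y_0,\ldots,y_k$, which the paper invokes explicitly.
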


\begin{proof}

  We calculate at a point $x \in \frD$, comparing the local structure to the local structure induced by treating these as points of $\frX$. The let $P$ be the cone of $\frX$ at $x$, it admits a collection of maps $\NN^k \oplus \NN \to P$. Taking the dual these give a collection of maps $\rho_\Sigma: P^\vee_{\RR} \to \RR_{\geq 0}^k \oplus \RR_{\geq 0}$. Let $+: \RR_{\geq 0}^k \oplus \RR_{\geq 0} \to \RR_{\geq 0}^2$ be the map sending $(p_1, \ldots p_k , q)$ to $(\sum p_i,q)$. The composite $+ \cdot \rho_\Sigma$ is the structure map to $\RR_{\geq 0}^2$. By construction the set $\Sigma_\frD$ is locally precisely the fibre of $\rho_\Sigma$ over $(1,1, \ldots ,1)$, whilst $\Sigma_\frX$ is locally the fibre of $+ \cdot \rho_\Sigma$ over $(1,1)$. This map is integral affine linear, and the ordering of $p_1, \ldots p_k$ is fixed since the monodromy action on the divisors $\frZ^i$ is trivial.

  This gives a description of $\Sigma_\frX$ as a fibration over a $k+1$ dilation of a $k$-simplex, $\Delta_x$, with $\Sigma_\frD$ the fibre over the central point. As a warning we point out that this does not need to be a trivial fibration, for instance it might rescale the integral affine structure.

  The statement for $\frZ$ is the following. At a point $x$ of $\frX$ lying on $\frZ$ there is a canonical inclusion $P_\frZ \to P_{\frX_\Delta}$ such that the image of the elements  $2p$ and $\sum y_i + p$ are equal (here it is important that we are using $P_{\frX_\Delta}$ and not $P_{\frX}$). This corresponds to a diagram on the duals:

    \begin{figure}[h]
    \centering
    \begin{tikzcd}
      P_\frZ^\vee \arrow{r} \arrow{rd} &  P_{\frX_\Delta}^\vee \arrow{d} \\
      & \RR_{\geq 0}
    \end{tikzcd}
  \end{figure}

    The local structure of $\Sigma (\frZ, \partial \frZ) $ and  $\Sigma (\frX_\Delta)$ is given as the fibre over $1$ of each of these spaces. Since they are the same dimension and the map is a continuous embedding it is locally an isomorphism. Now in a neighbourhood of each point lying on $\frZ$ this map is in fact an isomorphism by a comparison of the local fan structures. It extends then in a neighbourhood of $\Sigma (\partial \frZ)$, which by compactness we may take to include a positive integral neighbourhood of each point. This gives the desired statement.

    \end{proof}
    
    The reason that it is not a global isomorphism is because of the existence of phenomena of the type pictured below:

    \begin{figure}[h]
      \centering
\tikzset{every picture/.style={line width=0.75pt}} 

\begin{tikzpicture}[x=0.75pt,y=0.75pt,yscale=-1,xscale=1]

\draw    (219,139.5) -- (299.5,219.12) ;
\draw  [dash pattern={on 4.5pt off 4.5pt}]  (240.83,100.58) -- (299.5,219.12) ;
\draw    (380.33,141.08) -- (299.5,219.12) ;
\draw  [dash pattern={on 4.5pt off 4.5pt}]  (362.33,101.08) -- (299.5,219.12) ;
\draw  [dash pattern={on 0.84pt off 2.51pt}]  (211.75,40.58) -- (240.83,100.58) ;
\draw  [dash pattern={on 0.84pt off 2.51pt}]  (181.17,100.58) -- (219,139.5) ;
\draw  [dash pattern={on 0.84pt off 2.51pt}]  (419.33,101.58) -- (380.33,141.08) ;
\draw  [dash pattern={on 0.84pt off 2.51pt}]  (399.33,39.58) -- (362.33,101.08) ;
\draw    (299.5,219.12) -- (439.75,219.58) ;
\draw  [dash pattern={on 0.84pt off 2.51pt}]  (498.25,164.08) -- (439.75,219.58) ;
\draw  [dash pattern={on 0.84pt off 2.51pt}]  (439.75,219.58) -- (499.5,219.83) ;
\draw  [dash pattern={on 0.84pt off 2.51pt}]  (489.92,121.58) -- (439.75,219.58) ;
\draw [color={rgb, 255:red, 255; green, 0; blue, 0 }  ,draw opacity=1 ]   (219,139.5) -- (510.08,140.33) ;
\draw [color={rgb, 255:red, 255; green, 0; blue, 0 }  ,draw opacity=1 ]   (240.83,100.58) -- (531.92,101.42) ;
\draw [color={rgb, 255:red, 255; green, 0; blue, 0 }  ,draw opacity=1 ]   (240.83,100.58) -- (219,139.5) ;
\draw  [dash pattern={on 4.5pt off 4.5pt}]  (362.33,101.08) -- (380.33,141.08) ;

\draw (292,113) node [anchor=north west][inner sep=0.75pt]    {$\Sigma (\frX_\Delta)$};
\draw (427,113) node [anchor=north west][inner sep=0.75pt]    {$\Sigma (\frZ, \partial \frZ)$};

\end{tikzpicture}

      \end{figure}

    \noindent The picture shows the conical complexes, together with the level set corresponding to the tropicalisation. The tropicalisation of $\Sigma (\frX_\Delta)$ is only locally a closed subset, and it may have some other components missing that do not appear in $\Sigma (\frZ, \partial \frZ)$, but on the left hand side it is a local isomorphism.

Associated to this is a choice to $0^{th}$ order of an embedding of $\widecheck{\frD}$ into $\widecheck{\frX_\Delta}$.

\begin{lem}

There is an open subset of $\Proj k[\Sigma (\frX_\Delta)]$ fibred over $\GG_m^k$ with fibres inducing potentially different log structures on $\Proj k[\Sigma (\frD)]$. Over an open subset the fibres have simple log structure in the sense of~\cite{LogDegenerationII}.
  
\end{lem}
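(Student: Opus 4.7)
The plan is to globalise the local projection maps constructed in Lemma~\ref{Localembedding} and then apply the Proj construction to produce the fibration. First I would combine the local maps $\rho_\Sigma : P^\vee_\RR \to \RR^k_{\geq 0} \oplus \RR_{\geq 0}$ into a global continuous piecewise-linear map of cone complexes $\rho : \Sigma(\frX_\Delta) \to \Delta^k \times \RR_{\geq 0}$. The essential point for well-definedness is that the enumeration of the divisors $\frZ^0, \ldots, \frZ^k$ is canonical, coming from trivial monodromy of the Tyurin components, so the local projections agree on overlaps. After quotienting by the structure map to $\RR_{\geq 0}$ this gives a map of tropicalisations $\Sigma(\frX_\Delta) \to \Delta^k$, and applying the Proj construction with the closed gluing data induced on $\Sigma(\frX_\Delta)$ then produces a morphism of schemes $\pi : \Proj k[\Sigma(\frX_\Delta)] \to \PP^k$.

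Second, I would restrict $\pi$ to the preimage of the dense torus $\GG_m^k \subset \PP^k$. By Lemma~\ref{Localembedding} the fibre over the barycentre is identified as a scheme with $\Proj k[\Sigma(\frD)]$, and torus-equivariance of the local toric charts at the zero-dimensional strata shows that every fibre over $\GG_m^k$ has the same underlying scheme. This produces the desired open subset fibred over $\GG_m^k$.

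Third, I would compare the induced log structures. Each fibre carries the log structure obtained by restriction from $\Proj k[\Sigma(\frX_\Delta)]$; Lemma~\ref{Localembedding} guarantees that the projection is a surjection on integral tangent spaces with kernel the integral tangent space to the base, so the restriction descends to a well-defined log structure on $\Proj k[\Sigma(\frD)]$. What varies between fibres is the restriction of the global closed gluing data, which can induce non-isomorphic gluing data on $\Sigma(\frD)$ for different basepoints of $\GG_m^k$; this accounts for the phrase \emph{potentially different} in the statement.

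Finally, to establish simplicity on an open subset I would argue as follows. Simplicity in the sense of~\cite{LogDegenerationII} is a local condition at each zero-dimensional stratum, asking that the monodromy polytopes be elementary simplices. The monodromy polytopes of the fibres are affine slices of the monodromy polytopes of $\frX_\Delta$ in the direction picked out by $\rho$; since the total space is simple by assumption, these ambient polytopes are elementary, and a generic slice of an elementary simplex is again elementary. The main obstacle is ensuring that the locus in $\GG_m^k$ on which \emph{all} of the finitely many sliced polytopes are simultaneously elementary is actually non-empty rather than being cut out as the full torus; I would verify this directly at the barycentric point using the explicit product structure provided by Lemma~\ref{Localembedding} and then conclude that the good locus is a Zariski open dense subset of $\GG_m^k$.
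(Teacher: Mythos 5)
Your geometric intuition is right, but there are two substantive gaps and one place where you diverge from the paper's argument in a way that loses some control.

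First, the global map $\rho:\Sigma(\frX_\Delta)\to\Delta^k\times\RR_{\geq 0}$ that you want to build does not exist: Lemma~\ref{Localembedding} only produces the projections $\rho_\Sigma$ on charts near the closure of $\frD_{gen}$, and the accompanying picture in the paper makes explicit that $\Sigma(\frX_\Delta)$ is not globally fibred over the simplex --- there are cells far from $\Sigma(\frD)$ with no sensible image. Consequently the morphism $\pi:\Proj k[\Sigma(\frX_\Delta)]\to\PP^k$ you want to apply Proj to is not defined on the whole space, and ``restrict to the preimage of $\GG_m^k$'' cannot repair a map that is not there to begin with. The paper sidesteps this by never constructing such a $\pi$: it instead produces, for each choice of scalars, a defining ideal for a copy of $\Proj k[\Sigma(\frD)]$ inside $\Proj k[\Sigma(\frX_\Delta)]$, and the $\GG_m^k$ is the parameter space of those ideals (a $\GG_m^{k+1}$ of initial choices of $z^m=a$ modulo a common rescaling). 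The fibration of an open subset is then a consequence of having this family of compatible subschemes, not the starting point. Along the way you also need the observation --- crucial in the cone picture and absent from your write-up --- that the monoid relation $m=0$ passes to $z^m=1$, not $z^m=0$, in the associated monoid ring; this is precisely why the defining relations are $z^{m_\sigma}=a_\sigma$ with invertible $a_\sigma$ and why the parameter space is a torus and not an affine space.

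Second, your simplicity argument is both different from the paper's and, as stated, incomplete. A generic affine slice of an elementary lattice simplex is typically not a lattice simplex at all (slicing a $3$-simplex gives a quadrilateral, and a generic hyperplane misses every lattice point), so ``a generic slice of an elementary simplex is again elementary'' does not hold without qualification. The paper avoids this entirely: it argues that for a generic choice of the scalars $a_\sigma$ the embedded copy of $\Proj k[\Sigma(\frD)]$ is transverse to the discriminant locus and to the log strata of $\Proj k[\Sigma(\frX_\Delta)]$, and then invokes that a strict restriction of a simple, generically log smooth log structure along such a transverse subscheme is again simple and generically log smooth. If you want to keep a polytope-slicing argument you would need to show that the monodromy polytopes of the fibres arise as intersections with rational hyperplanes compatible with the lattice structure, and that these intersections remain elementary simplices --- that is substantially more work than the transversality argument the paper uses, and it is not clear it is even true without the transversality input.

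Finally, a smaller point: you say that every fibre over $\GG_m^k$ ``has the same underlying scheme'' by torus-equivariance of local toric charts. This is true, but the paper derives it from the global statement that the monodromy on $\Sigma(\frX)$ restricts to the monodromy on $\Sigma(\frD)$, so the associated normal bundle sections are nonvanishing and linearly independent along $\Sigma_\frD$; local torus-equivariance alone does not immediately give a consistent global identification, and you should make the monodromy compatibility explicit as the paper does.
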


\begin{proof}

  Open gluing data for $\frX$ gives rise to open gluing data for $\frD$ by restriction. In turn this gives cone gluing data compatibly under restriction. Defining equations for $\Proj k[\Sigma (\frD_\Delta)]$ can locally be constructed by pulling back the integral tangent vectors from $\Delta_k$ using the fact that $\Sigma(\frD)$ is transverse to the walls of $\Sigma (\frX_\Delta)$. Locally this gives a choice of monomial along each face, and any two choices differ by a function invertible on the polytope cell connecting them. Given a choice of gluing data in the cone picture (as defined in~\cite{LogDegenerationI} Definition 2.3) on $\Proj k[\Sigma (\frX_\Delta)]$ we get induced gluing data on $\Proj k[\Sigma (\frD)]$ compatible with these choices. Note that a monoid relation $m = 0$ becomes after passing to the associated monoid ring $z^m = 1$, not $z^m = 0$. Fix one chart corresponding to a $k$-dimensional face of $\Sigma (\frX_\Delta)$ corresponding to a zero-dimensional strata of $\Sigma(\frD)$ and a relation $z^m = 1$, twisting by the gluing data gives a collection of relations $z^{m_\sigma} = a_\sigma$ in each chart compatible with one another under restriction. Since the monodromy action on $\Sigma(\frX)$ restricts to the monodromy action on $\Sigma (\frD)$ the associated sections of the normal bundle are non-vanishing on $\Sigma_\frD$ and linearly independent.

  On an initial chart rather than taking $z^m = 1$ as the defining relation one can choose $z^m = a$ to obtain a different embedding. Doing this separately for each of the $k+1$ generators gives a $\GG_m^{k+1}$ worth of choices, but a common rescaling identifies the associated ideals and so we get the desired claim. A generic choice is transverse to the singular locus and any other log strata and so the strict induced log structure is indeed simple and generically log smooth.

\end{proof}

A similar statement holds for the LG model:

\begin{lem}

Let $\frZ_i$ be a toric degenerating component. There is an open embedding of $\Proj k[\Sigma (\frZ_i, \partial \frZ_i)]$ into $\Proj k[\Sigma (\frX)]$.

\end{lem}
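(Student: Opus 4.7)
The plan is to mimic the argument of the preceding lemma for $\widecheck{\frD}$, but simpler, since we no longer need a $\GG_m^{k+1}$ worth of choices. The key input is the affine-linear open embedding
\[ \iota \colon U \hookrightarrow \Sigma(\frX_\Delta)^\circ, \qquad U \subset \Sigma(\frZ_i, \partial \frZ_i)^\circ\text{ open} \]
supplied by Lemma~\ref{Localembedding}, which is compatible with the fan structures at zero-dimensional strata. Since $\Proj k[\Sigma(\cdot)]$ is constructed cone by cone from the integral affine data of the tropicalisation together with a choice of cone gluing data, any structure that transports across $\iota$ on the level of tropicalisations will transport across on the level of schemes.

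First I would pull back the open gluing data. Given open gluing data on $\frX$ in the sense of~\cite{LogDegenerationI}, restriction along $\iota$ determines open gluing data on $U \subset \Sigma(\frZ_i,\partial\frZ_i)$, since the affine linear identification matches monoids of integral tangent vectors along each cell. Extending to a full choice of open gluing data on $\Sigma(\frZ_i,\partial\frZ_i)$ outside $U$ is possible because outside $U$ the cone complex of $\frZ_i$ is freely glued (the extra cells record the boundary divisor $\partial\frZ_i$ and carry no additional compatibility constraint with $\frX$). Converting to cone gluing data via the standard procedure of~\cite{LogDegenerationI} Definition 2.3, these choices are still compatible under restriction along $\iota$.

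Next I would check locally that the two $\Proj$ constructions agree on $U$. On each maximal cone $\sigma \subset U$, the embedding $\iota$ identifies $\sigma$ with a cone $\iota(\sigma) \subset \Sigma(\frX_\Delta)$ as integral affine cones, so the associated affine charts $\Spec k[\sigma^\vee \cap M_\sigma]$ are naturally isomorphic. Because $\iota$ is compatible with the fan structure at each zero-dimensional strata, the gluings across lower-dimensional faces agree, so the isomorphisms on charts glue to an isomorphism of open subschemes of the two $\Proj$ constructions. The image in $\Proj k[\Sigma(\frX)]$ is then an open subscheme: it is cut out by the open union of cones $\iota(U)$, and the remaining strata of $\Sigma(\frX)$ (those not in $\iota(U)$) correspond to closed strata whose removal yields an open subscheme.

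The main obstacle I anticipate is purely bookkeeping: one must verify that the extension of gluing data off $U$ does not introduce compatibility failures, i.e.\ that the collection of local isomorphisms actually glues to a global map of schemes rather than just an \'etale-local one. This is handled precisely as in the preceding lemma for $\widecheck{\frD}$, using that $\iota$ preserves monodromy (a consequence of the local description of $P_\frZ \to P_{\frX_\Delta}$ from Lemma~\ref{Localembedding}), so parallel transport of monomials around loops in $U$ agrees on both sides and the twisting by cone gluing data commutes with $\iota^*$.
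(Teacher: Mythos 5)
Your proposal is correct and follows the same route as the paper, which simply declares the statement "a trivial consequence of the above open embedding on tropical spaces" (Lemma~\ref{Localembedding}): you are spelling out in detail the restriction of gluing data along $\iota$ and the chart-by-chart agreement that the authors leave implicit. The worry about extending gluing data "off $U$" is essentially moot since $U$ already contains a neighbourhood of every zero-dimensional stratum and of $\Sigma(\partial\frZ_i)$, so the data determining the $\Proj$ construction is fully captured there.
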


\begin{proof}

  This is a trivial consequence of the above open embedding on tropical spaces.
  
  \end{proof}

\section{Logarithmic deformation of pairs}

We now bootstrap the smoothing results of~\cite{CLM} and~\cite{FFR} from smoothing Calabi-Yau varieties to smoothing pairs of such varieties, and higher codimension. This uses an explicit description of the canonical class to calculate how one corrects the Maurer-Cartan equations and show compatibility. We should explain that this is not a trivial application of deformation theory since none of the deformations are locally trivial, and it applies in the case where $\frD$ is an elliptic curve. The classical statement is the following:

\begin{thm}

  Suppose that $(\widecheck{\frX},\widecheck{\frD})$ are a pair with $\widecheck{\frX}$ a Calabi-Yau variety of dimension at least 3, and $\widecheck{\frD}$ a smooth Cartier divisor on $\widecheck{\frX}$ which is itself Calabi-Yau. Equivalently the normal bundle of $\widecheck{\frD}$, $\cN_{\widecheck{\frD}/\widecheck{\frX}}$, is $\cO_{\widecheck{\frD}}$. Then $\widecheck{\frX}$ is fibred over $\PP^1$ with fibre class $\widecheck{\frD}$ and any deformation of $\widecheck{\frX}$ induces an unobstructed deformation of $(\widecheck{\frX},\widecheck{\frD})$. Hence any smooth Calabi-Yau obtained from deforming $\widecheck{\frX}$ is also fibred by Calabi-Yau divisors.
  
\end{thm}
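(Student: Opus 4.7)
The plan has three pieces: produce the $\PP^1$-fibration on $\widecheck{\frX}$ from the cohomology of $\cO_{\widecheck{\frX}}(\widecheck{\frD})$, lift first order deformations of $\widecheck{\frX}$ to the pair via the logarithmic tangent sequence, and upgrade this to a smooth surjection $\mathrm{Def}(\widecheck{\frX}, \widecheck{\frD}) \to \mathrm{Def}(\widecheck{\frX})$ by a logarithmic Tian--Todorov argument. For the fibration, the isomorphism $\cO_{\widecheck{\frX}}(\widecheck{\frD})|_{\widecheck{\frD}} \cong \cN_{\widecheck{\frD}/\widecheck{\frX}} \cong \cO_{\widecheck{\frD}}$ gives the short exact sequence
\[ 0 \to \cO_{\widecheck{\frX}} \to \cO_{\widecheck{\frX}}(\widecheck{\frD}) \to \cO_{\widecheck{\frD}} \to 0, \]
and since $\widecheck{\frX}$ is a strict Calabi--Yau in dimension at least three one has $H^1(\widecheck{\frX}, \cO) = 0$, yielding $h^0(\cO(\widecheck{\frD})) = 2$. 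A basis is $s_1$ cutting out $\widecheck{\frD}$ together with any $s_2$ whose image in $H^0(\cO_{\widecheck{\frD}}) = \CC$ is nonzero; then $s_2|_{\widecheck{\frD}}$ is a nonvanishing constant, so $\{s_2 = 0\}$ is disjoint from $\widecheck{\frD}$ and the base locus of $|\widecheck{\frD}|$ is empty. The resulting morphism $\widecheck{\frX} \to \PP^1$ has trivial canonical class on each fibre by adjunction, and Bertini gives smooth generic fibres.

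Next I would compare pair and ambient deformations via the log tangent sequence
\[ 0 \to \Theta_{\widecheck{\frX}}(-\log \widecheck{\frD}) \to \Theta_{\widecheck{\frX}} \to \cN_{\widecheck{\frD}/\widecheck{\frX}} \to 0. \]
The obstruction to lifting a first order deformation of $\widecheck{\frX}$ to a first order deformation of $(\widecheck{\frX}, \widecheck{\frD})$ lies in $H^1(\cN_{\widecheck{\frD}/\widecheck{\frX}}) = H^1(\cO_{\widecheck{\frD}})$, which vanishes because $\widecheck{\frD}$ is itself a strict Calabi--Yau of dimension at least two (using $\dim \widecheck{\frX} \geq 3$). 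Hence $H^1(\Theta_{\widecheck{\frX}}(-\log \widecheck{\frD})) \to H^1(\Theta_{\widecheck{\frX}})$ is surjective.

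The hard part is upgrading this first order lift to all orders. The plan is a logarithmic Tian--Todorov argument: the trivialisation of $\omega_{\widecheck{\frX}}$ induces an isomorphism $\Theta_{\widecheck{\frX}}(-\log \widecheck{\frD}) \cong \Omega^{n-1}_{\widecheck{\frX}}(\log \widecheck{\frD})$, and combined with $E_1$-degeneration of the logarithmic Hodge--de Rham spectral sequence this equips the Kodaira--Spencer dgLa of the pair with a BV structure whose homotopy abelianness yields the $T^1$-lifting property, in the spirit of Kawamata--Namikawa and Iacono--Manetti. Together with the first order surjection this forces $\mathrm{Def}(\widecheck{\frX}, \widecheck{\frD}) \to \mathrm{Def}(\widecheck{\frX})$ to be smooth and surjective, so every deformation $\widecheck{\frX}_t$ comes equipped with a deformation $\widecheck{\frD}_t$. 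Triviality of the normal bundle is preserved in a small neighbourhood since line bundle deformations of $\cO_{\widecheck{\frD}}$ are controlled by $H^1(\cO_{\widecheck{\frD}}) = 0$, and applying the fibration step to $(\widecheck{\frX}_t, \widecheck{\frD}_t)$ gives the final statement.
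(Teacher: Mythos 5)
The paper does not actually prove this theorem: it is stated as ``the classical statement'' motivating the harder log-toroidal pair-smoothing result (Corollary~\ref{PairSmoothing}) which they prove via BV algebras and Maurer--Cartan techniques following~\cite{CLM} and~\cite{FFR}. So there is no proof in the paper to compare against, and your argument should be judged on its own.

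Your first two steps are fine. The fibration argument ($h^0(\cO(\widecheck{\frD}))=2$, base-point-free pencil, adjunction) is correct given the strict Calabi--Yau hypothesis in dimension $\geq 3$, and the first-order lifting via $H^1(\cN_{\widecheck{\frD}/\widecheck{\frX}}) = H^1(\cO_{\widecheck{\frD}}) = 0$ is also right.

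Step 3 contains a genuine gap. The trivialisation of $\omega_{\widecheck{\frX}}$ does \emph{not} give $\Theta_{\widecheck{\frX}}(-\log \widecheck{\frD}) \cong \Omega^{n-1}_{\widecheck{\frX}}(\log \widecheck{\frD})$. Contracting $z_1\partial_{z_1}, \partial_{z_2}, \dots, \partial_{z_n}$ against $dz_1 \wedge \cdots \wedge dz_n$ (with $\widecheck{\frD} = \{z_1=0\}$) produces $z_1\, dz_2 \wedge \cdots \wedge dz_n$ and $dz_1 \wedge \cdots \wedge \widehat{dz_i} \wedge \cdots \wedge dz_n$, which is $\Omega^{n-1}_{\widecheck{\frX}}(\log \widecheck{\frD})\otimes\cO(-\widecheck{\frD})$, not $\Omega^{n-1}_{\widecheck{\frX}}(\log \widecheck{\frD})$. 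Equivalently, the perfect pairing $\Omega^1(\log D)\otimes\Omega^{n-1}(\log D)\to\Omega^n(\log D)=\omega_X(D)$ gives $\Theta_X(-\log D)\cong\Omega^{n-1}_X(\log D)\otimes\omega_X^{-1}(-D)$, so you are off by a twist by $\cO(-\widecheck{\frD})$. The usual logarithmic Tian--Todorov machinery applies to \emph{log} Calabi--Yau pairs where $\omega_X(D)\cong\cO_X$; here $\omega_X\cong\cO_X$ instead, and the complex you actually get, $\Omega^{\bullet}_X(\log D)(-D)$, requires a separate argument (it is the ``compactly supported'' variant, and one would need its own $E_1$-degeneration and BV structure to make your plan go through).

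The good news is that none of Step 3 is needed: a much more elementary argument closes the proof. Since $\widecheck{\frX}$ is a strict Calabi--Yau of dimension $\geq 3$, $H^1(\cO_{\widecheck{\frX}}) = H^2(\cO_{\widecheck{\frX}}) = 0$, so every line bundle deforms, uniquely, to any deformation $\widecheck{\frX}_t$. Let $\cL_t$ be the deformation of $\cO_{\widecheck{\frX}}(\widecheck{\frD})$. You have already shown $H^i(\cO_{\widecheck{\frX}}(\widecheck{\frD})) = 0$ for $i>0$ on the central fibre, so by semicontinuity and flatness of Euler characteristic $h^0(\cL_t)=2$ for $t$ small, and the surjectivity of the evaluation map $H^0(\cL_t)\otimes\cO\to\cL_t$ persists. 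Hence the $\PP^1$-fibration deforms along with $\widecheck{\frX}$, and a general fibre $\widecheck{\frD}_t$ is a smooth Calabi--Yau divisor deforming $\widecheck{\frD}$. This immediately gives both the surjectivity $\mathrm{Def}(\widecheck{\frX},\widecheck{\frD})\to\mathrm{Def}(\widecheck{\frX})$ (any deformation of $\widecheck{\frX}$ carries a canonical deformation of the linear system $|\widecheck{\frD}|$) and the last sentence of the theorem, with no appeal to dgLa unobstructedness. This also makes transparent why the hypothesis $\dim\widecheck{\frX}\geq 3$ is there --- in dimension two, $H^2(\cO)\neq 0$ and line bundles do not automatically deform, which is exactly the $K3$ polarisation issue the paper flags.
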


A similar statement holds for $\widecheck{\frD}$ any smooth Calabi-Yau subvariety of $\widecheck{\frX}$ with trivial normal bundle. Under these conditions $\widecheck{\frX}$ is generically fibred by smooth Calabi-Yau varieties and the same remains true for deformations of $\widecheck{\frX}$.

In~\cite{CLM} the authors prove formal smoothability for any log scheme for which two Hodge-theoretic conditions hold. These are known to hold for maximal degenerations of Calabi-Yau manifolds, and~\cite{FFR} extended this to much larger classes of singular spaces, those which are toroidal families. In future work they intend to extend this to families not just over the standard log point, but more general bases but still with underlying scheme a point. Our technique here would extend in parallel to their work.

To begin with we have $\widecheck{\frX} \to \Spec k^\dagger$ a log toroidal family and a strictly embedded codim $k$ log subscheme $\widecheck{\frD} \subset \widecheck{\frX}$ such that the induced map $\widecheck{\frD} \to \Spec k^\dagger$ is log toroidal and the normal bundle of $\widecheck{\frD}$ inside $\widecheck{\frX}$ is trivial, isomorphic to $\cO_{\widecheck{\frD}}^k$. In particular local to $\widecheck{\frD}$ the log scheme $\widecheck{\frX}$ is isomorphic to $\widecheck{\frD} \times \AA^k$. This will give local models for $\widecheck{\frX}$ near $\widecheck{\frD}$ compatible with the local models of $\widecheck{\frD}$. We consider divisorial deformations of $\frX$ and $\frD$ with a compatible closed embedding.

\begin{lem}

  Let $V \to \Spec k^\dagger$ be an affine log toroidal family, $W \subset V$ a strictly embedded codimension $k$ closed subscheme transverse to all the log strata and such that the induced map to $\Spec k ^\dagger$ is also a log toroidal family. Then any two first order deformations of this data are isomorphic. 
  
\end{lem}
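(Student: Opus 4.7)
The plan is to trivialise the deformation in two steps, first for $V$ and then for $W$ inside a trivialised $V$. In step one, since $V$ is affine and log toroidal over $\Spec k^\dagger$, the first-order log deformations of $V$ alone are classified by the first cohomology of the log tangent sheaf $\Theta^{\log}_{V/\Spec k^\dagger}$, which vanishes because $V$ is affine and the sheaf is quasi-coherent. Hence any first-order log deformation $V' \to \Spec k[\epsilon]^\dagger$ with central fibre $V$ is isomorphic (non-canonically) to the trivial deformation $V \times_{\Spec k^\dagger} \Spec k[\epsilon]^\dagger$ via a log isomorphism that restricts to the identity on the central fibre. Fixing such an isomorphism transports $W' \subset V'$ to a first-order deformation $\tilde W$ of $W$ sitting inside the trivial deformation of $V$.

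In step two, I use that $W$ is strictly embedded, codimension $k$, and transverse to all the log strata of $V$: these hypotheses, together with $V$ being log toroidal, produce étale-local models near $W$ of the form $W \times \AA^k$, with the log structure pulled back from the $W$-factor and with coordinates $y_1, \ldots, y_k$ on the $\AA^k$-factor cutting out $W$. In particular $\cN_{W/V} \cong \cO_W^{\oplus k}$ is trivial. A first-order deformation $\tilde W$ of $W$ inside the trivial deformation of $V$ is then classified by a section $(f_1, \ldots, f_k) \in \Gamma(W, \cO_W)^{\oplus k}$ recording how each $y_i$ is perturbed. The translation $y_i \mapsto y_i - \epsilon \tilde f_i$, where $\tilde f_i$ is any lift of $f_i$ to a regular function on the étale neighbourhood, is a log automorphism of $V \times \Spec k[\epsilon]^\dagger$ over $\Spec k[\epsilon]^\dagger$ restricting to the identity on the central fibre, and it takes $\tilde W$ to $W \times \Spec k[\epsilon]^\dagger$. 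Composing the two log isomorphisms yields the required isomorphism of first-order deformations of pairs.

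The main obstacle lies in step two, namely verifying that the translation in the $y_i$ is an automorphism of the \emph{log} structure and not just of the underlying scheme, and in globalising it beyond an étale neighbourhood of $W$. The first point is exactly where the transversality assumption is used: because $W$ meets each log stratum transversally, the $y_i$ do not appear among the generators of the sharp stalks $\overline{\cM}_{V,x}$ at any point $x \in W$, so shifting them preserves both the chart and the map to $\overline{\cM}_{\Spec k^\dagger}$. The second point is handled by extending $(\tilde f_i)$ to a global section over $V$ using affineness, and then applying the affine $H^1$-vanishing of step one once more to absorb any discrepancy outside the neighbourhood of $W$ into a further log automorphism of the trivial deformation of $V$ that is the identity on $W$ and on the central fibre.
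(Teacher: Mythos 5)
Your proof is correct and follows the same underlying route as the paper: reduce \'etale-locally to the product model $V \cong W \times \AA^k$ with $W = \{0\}\times W$ cut out by coordinates $y_1,\dots,y_k$ that, by transversality, do not appear in the log charts, and then trivialise. Where the paper simply defers to the rigidity argument of~\cite{LogDegenerationII}, you make the two-step trivialisation explicit, and the two ingredients you invoke---affine vanishing of $H^1$ of the (pushed-forward) log tangent sheaf to kill the deformation of $V$, and surjectivity of $\Theta^{\log}_{V/\Spec k^\dagger} \to i_*\cN_{W/V}$ coming from transversality to then kill the residual deformation of the embedding of $W$---are precisely what that citation supplies.
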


\begin{proof}

  One \'etale locally factors the problem with $V = \AA^k \times Y$ and $W =   \{0\} \times Y$ where $Y \to \Spec k^\dagger$ is a log toroidal family. Then this works precisely as in~\cite{LogDegenerationII}.
  
\end{proof}

\subsection{Compatible BV structures}

The key idea of the proof of our main theorem is that there are compatible BV structures between the deformation theories of $\Proj k[\Sigma (\widecheck{\frX}_0)]$, $\Proj k[\Sigma (\widecheck{\frD})]$, and $(\Proj k[\Sigma (\widecheck{\frX}_0)]$, $\Proj k[\Sigma (\widecheck{\frD})])$. We follow~\cite{Sernesi} in defining the log vector fields on $\widecheck{\frX}$ tangent to $\widecheck{\frD}$. In the case $k=1$ we could describe this using an additional log structure on $\widecheck{\frX}$, but since we want to handle all the cases at once we will stick to a classical description. For this subsection we will assume that $\widecheck{\frX}$ and $\widecheck{\frD}$ are globally log smooth, and will lift this restriction later.

\begin{dfn}

  Let $\widecheck{\frD}$ locally be defined by $V(t)$ inside $\widecheck{\frX}$. Strict \'etale locally we have that $\widecheck{\frX}$ is isomorphic to $\widecheck{\frD} \times \AA^1$ with $t$ the final coordinate. We take a local basis $\ddx{t}, \ddx{x_1} ... \ddx{x_{k-1}} \ldots \ddlog {x_{k}} \ldots  \ddlog {x_{n-1}}$ of $\Theta^1_{\widecheck{\frX}}$. The algebra of polyvector fields on $\widecheck{\frX}$ tangent to $\widecheck{\frD}$ is the subalgebra generated locally to $\widecheck{\frD}$ by $t \ddx{t}, \ddx{x_1} ... \ddx{x_{k-1}} \ldots \ddlog {x_{k}} \ldots  \ddlog {x_{n-1}}$ and the full algebra over other open sets. We write this $\Theta^i _{\widecheck{\frX} \langle \widecheck{\frD} \rangle}$ for the degree $i$ term, and $\bigwedge \Theta_{\widecheck{\frX} \langle \widecheck{\frD} \rangle}$ for the whole polyvector field algebra.

In general if $\widecheck{\frD}$ is an equisingular complete intersection inside $\widecheck{\frX}$ then we analogously take the algebra of polyvector fields on $\widecheck{\frX}$ tangent to $\widecheck{\frD}$ to be the subalgebra generated locally to $\widecheck{\frD}$ by $t_1 \ddx{t_1}, \ldots , t_1 \ddx{t_r}, \ldots t_r \ddx{t_r},  \ddlog{x_{r+1}} ... \ddlog{x_{k}} \ldots \ddx {x_{k}} \ldots  \ddx {x_{n}}$ and the full algebra over other open sets. We write this $\Theta^i _{\widecheck{\frX} \langle \widecheck{\frD} \rangle}$ for the degree $i$ term, and $\bigwedge \Theta_{\widecheck{\frX} \langle \widecheck{\frD} \rangle}$ for the whole polyvector field algebra.
  
\end{dfn}

This structure controls the deformations of the local models as described in~\cite{FFR}, and for the same reason as their Theorem 6.13, combined with our local rigidity.

From now on let us assume that the surjectivity and Hodge theoretic assumptions necessary to apply techniques of~\cite{CLM} hold for $\widecheck{\frX}$ and $\widecheck{\frD}$. In the case we are interested in both are maximal and so by~\cite{GrossSiebert} Theorem 4.1 we will see that these assumptions hold.

Note that if $\widecheck{\frD}$ is codimension greater than one this is not a collection of bundles, rather of sheaves. One worries at this point that much of the argument involved in proving smoothness works by studying cotangent rather than tangent vectors. Amazingly this is not a problem as we shall see. An easy local calculation shows that $\bigwedge \Theta_{\widecheck{\frX} \langle \widecheck{\frD} \rangle}$ is a subalgebra of $\bigwedge \Theta_{\widecheck{\frX}}$ and the inclusion is compatible with the BV operator:

\begin{lem}

The complex $\bigwedge \Theta _{\widecheck{\frX} \langle \widecheck{\frD} \rangle} $ with the Schouten-Nijenhuis bracket is a sub-Gerstenhaber algebra of $\bigwedge \Theta _{\widecheck{\frX}}$. The BV operator on $\bigwedge \Theta_{\widecheck{\frX}}$ restricts to a BV operator on $\bigwedge \Theta_{\widecheck{\frX} \langle \widecheck{\frD} \rangle}$.

\end{lem}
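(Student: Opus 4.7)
The plan is to reduce both statements to a direct local check on the product model $V = \AA^r \times Y$ with $W = \{0\} \times Y$ supplied by the first-order rigidity lemma just proved, and then verify each of the two structures in turn.

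For the sub-Gerstenhaber claim I would first give $\Theta_{\widecheck{\frX}\langle\widecheck{\frD}\rangle}$ the intrinsic description as the subsheaf of log derivations of $\cO_{\widecheck{\frX}}$ that preserve the ideal $\cI_{\widecheck{\frD}}$, and check in the local product model that this intrinsic sheaf is generated as an $\cO_{\widecheck{\frX}}$-module by the explicit generators $t_i\partial_{t_j}$ together with the log/regular vector fields pulled back from $Y$. Closure under the wedge product is immediate. Closure under the Schouten--Nijenhuis bracket then follows essentially formally: the bracket of a pair of vector fields preserving $\cI_{\widecheck{\frD}}$ still preserves $\cI_{\widecheck{\frD}}$, since for $f \in \cI_{\widecheck{\frD}}$ one has $V(f),W(f) \in \cI_{\widecheck{\frD}}$ and hence $[V,W](f) = V(W(f))-W(V(f)) \in \cI_{\widecheck{\frD}}$. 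Extending to higher-degree polyvector fields via the biderivation property of the Schouten--Nijenhuis bracket propagates this through the whole algebra.

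For the BV operator claim I would induct on polyvector field degree using the defining BV identity
\[\Delta(\alpha\wedge\beta) \;=\; \Delta(\alpha)\wedge\beta \;+\; (-1)^{|\alpha|}\alpha\wedge\Delta(\beta) \;+\; (-1)^{|\alpha|}[\alpha,\beta].\]
In degree zero $\Delta$ vanishes, and in degree one $\Delta V\in\cO_{\widecheck{\frX}}$ automatically lies in the subalgebra. Since the subalgebra is generated over $\cO_{\widecheck{\frX}}$ by its degree-one part, every element of degree $n\geq 2$ can be written as a wedge $\alpha\wedge\beta$ of elements of strictly lower degree both lying in the subalgebra, and the identity above then exhibits $\Delta(\alpha\wedge\beta)$ as a sum of three terms each lying in the subalgebra by the inductive hypothesis together with the first part of the lemma. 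The precise form of the holomorphic volume form enters only through the degree-one check, where after reduction to the product model $\Omega_{\widecheck{\frX}}$ agrees up to an invertible factor with $dt_1\wedge\cdots\wedge dt_r\wedge\Omega_Y$, and the divergence of each explicit generator is an immediate computation.

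The main point to watch is the initial reduction to the local product form of both $\widecheck{\frX}$ around $\widecheck{\frD}$ and of the volume form $\Omega_{\widecheck{\frX}}$; both are delivered by the preceding first-order rigidity lemma together with the triviality of the normal bundle and the Calabi-Yau hypothesis. Past this step the remaining checks reduce to direct inspection of the explicit generators, exactly as the authors indicate when calling the verification an easy local calculation.
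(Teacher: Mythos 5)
Your proposal is correct, and the BV half of it takes a genuinely cleaner route than the paper's. The paper proves the bracket statement by reducing to degree one and computing $[t_i\partial_{t_j},t_l\partial_{t_m}]$ explicitly, and proves the BV statement by writing out the full composite $\llcorner\omega\cdot d\cdot\llcorner\omega$ on a general monomial polyvector field and inspecting the result. You instead reformulate $\Theta_{\widecheck{\frX}\langle\widecheck{\frD}\rangle}$ intrinsically as the log derivations preserving $\cI_{\widecheck{\frD}}$ (which is equivalent to the paper's generator description, as one checks in coordinates), after which closure under the bracket is the classical fact that $[V,W]$ preserves an ideal whenever $V$ and $W$ do, propagated to all degrees by biderivation. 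For the BV statement you invoke the identity $\Delta(\alpha\wedge\beta)=\Delta\alpha\wedge\beta+(-1)^{|\alpha|}\alpha\wedge\Delta\beta+(-1)^{|\alpha|}[\alpha,\beta]$ and induct on degree; since the degree-zero part of the subalgebra is all of $\cO_{\widecheck{\frX}}$, the degree-one base case $\Delta V\in\cO_{\widecheck{\frX}}$ is automatic, and the inductive step uses part one of the lemma together with the fact that the subalgebra is generated over $\cO_{\widecheck{\frX}}$ in degree one. This avoids any explicit computation with the volume form entirely: the remark about $\Omega_{\widecheck{\frX}}$ reducing to $dt_1\wedge\cdots\wedge dt_r\wedge\Omega_Y$ is actually superfluous, because the base case already succeeds for any choice of trivializing top form. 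What your argument buys is conceptual transparency and immunity to sign errors; what the paper's direct computation buys is a completely explicit formula for $\Delta$ on a basis, which the authors may want on hand for the scattering-diagram calculations they allude to later. The two proofs use the same local reduction and the same degree-one-generation fact, so they are compatible; yours simply delegates more work to the abstract BV formalism.
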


\begin{proof}

  The statements are all local in nature, so we may reduce to the case where we have local equations for $\widecheck{\frD}$. To show that the Schouten-Nijenhuis bracket restricts to a bracket on this sub-space we need only check it for degree one elements, so for vector fields. Let $t_1 \ldots t_r$ be local equations for $\widecheck{\frD}$, then we must check that the brackets $[t_i \ddx{t_j}, t_l \ddx {t_m}]$ lie inside the span of these:

  \[\left[ t_i \ddx{t_j}, t_l \ddx {t_m} \right] = t_i \delta _{j,l} \ddx{t_m} - t_l \delta_{m,i} \ddx{t_j}\]

  And so this forms a sub-Gerstenhaber algebra. Now let $\omega$ be a global top form $\bigwedge dt_i \wedge \bigwedge dx_i \wedge \bigwedge d \: log \; x_i$ inducing the BV operator. Then the composite $\llcorner \: \omega \cdot d \cdot \llcorner \: \omega$ maps:
  \begin{align*} \prod_{j \in J} t_j & f (t_1, \ldots,  x_{n-1}) \bigwedge_{j \in J} \frac{d}{dt_j}  \bigwedge_{i \in I} \frac{d}{dx_i}  \bigwedge_{k \in K} \ddlog{x_k} \mapsto
    \prod_{j \in J } t_j f \bigwedge _{j \in \hat{J}} dt_j  \bigwedge_{i \in \hat{I}} dx_i  \bigwedge_{k \in \hat{K}} d \: log \;x_k \\
    & \mapsto \sum_{l \in J} \left( f \: dt_l \prod_{j \in J, j \neq l} t_j \right) \bigwedge_{j \in \hat{J}} dt_j \bigwedge_{i \in \hat{I}} dx_i + \prod_{j \in J} t_j \sum_{l \in J} \left(\frac{df}{dt_l} dt_l\right) \bigwedge _{j \in \hat{J}} dt_j  \bigwedge_{i \in \hat{I}} dx_i  \bigwedge_{k \in \hat{K}} d \: log \;x_k \\
    & \quad + \prod_{j \in J } t_j \bigwedge _{j \in \hat{J}} dt_j \left( \sum_{i \in I} \frac{df}{dx_i} dx_i + \sum_{k \in K} \frac{df}{d\: log \; x_i} d\:log \; x_i \right) \bigwedge_{i \in \hat{I}} dx_i \bigwedge_{k \in \hat{K}} d \: log \;x_k \\
    & \mapsto \sum_{l \in J} \left( \left( f \prod_{j \in J, j \neq l} t_j + \frac{df}{dt_l} \prod_{j \in J} t_j \right) \bigwedge_{j \in J, j \neq l} \ddx{t_j} \right) \bigwedge_{i \in I} \frac{d}{dx_i}  \bigwedge_{k \in K} \ddlog{x_k} \\
  & \quad + \left(\sum_{l \in I} \frac{df}{dx_l}\bigwedge_{i \in I, i \neq l} \frac{d}{dx_i}  \bigwedge_{k \in K} \ddlog{x_k} + \sum_{l \in K} \frac{df}{d \: log \; x_l}\bigwedge_{i \in I} \frac{d}{dx_i}  \bigwedge_{k \in K, k \neq l} \ddlog{x_k} \right)\prod_{j \in J} t_j \bigwedge_{j \in J} \frac{d}{dt_j}\end{align*}
  where $\hat{A}$ is the complement of the indexing set $A$. So this BV operator preserves the subalgebra $\bigwedge \Theta_{\widecheck{\frX} \langle \widecheck{\frD} \rangle}$.
\end{proof}

The definition of $\Theta^i_{\widecheck{\frX} \langle \widecheck{\frD} \rangle}$ is precisely set up so that the restriction $\Theta^i_{\widecheck{\frX} \langle \widecheck{\frD} \rangle} \mid_{\widecheck{\frD}}$ admits a map to $\Theta_{\widecheck{\frD}}$, which sends any section deforming away from $\widecheck{\frD}$ to zero. We claim that there is a commutative diagram of BV algebras:

\begin{lem}

  The following diagram commutes as a diagram of BV algebras and the outer square is Cartesian as a diagram of complexes of sheaves on $\widecheck{\frX}$:

  \begin{figure}[h]
  \centering
  \begin{tikzcd}
    \Theta_{\widecheck{\frX} \langle \widecheck{\frD} \rangle} \arrow{r} \arrow{d} & \Theta_{\widecheck{\frX}} \arrow{d} \\ \Theta_{\widecheck{\frX} \langle \widecheck{\frD} \rangle}\mid_{\widecheck{\frD}} \arrow{r} \arrow{d} & \Theta_{\widecheck{\frX}}\mid_{\widecheck{\frD}} \arrow{d} \\ \Theta_{\widecheck{\frD}} \arrow{r} & \Theta_{\widecheck{\frX}}\mid_{\widecheck{\frD}}
  \end{tikzcd}
  \end{figure}
  
\end{lem}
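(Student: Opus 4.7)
The plan is to pick a strict \'etale chart around $\widecheck{\frD}$ in which the pair splits as $\widecheck{\frX}\cong\widecheck{\frD}\times\mathbb{A}^r$ with $\widecheck{\frD}$ the zero section --- such a chart exists by the local rigidity lemma proved just above --- and then to verify each claim directly on monomials in the standard generators $t_i\partial_{t_j}, \partial_{x_i}, \partial_{\log x_k}$. In such a chart each of the six arrows has a transparent description: the top row and the two vertical inclusions on the right are inclusions of $\cO_{\widecheck{\frX}}$-submodules; the horizontal restriction maps are $-\otimes_{\cO_{\widecheck{\frX}}}\cO_{\widecheck{\frD}}$; and the last vertical arrow $\Theta_{\widecheck{\frX}\langle\widecheck{\frD}\rangle}|_{\widecheck{\frD}}\to\Theta_{\widecheck{\frD}}$ is the $\cO_{\widecheck{\frD}}$-linear projection sending $t_i\partial_{t_j}|_{\widecheck{\frD}}\mapsto 0$ and acting as the identity on $\partial_{x_i}$ and $\partial_{\log x_k}$. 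Commutativity of each small square is then immediate on these generators.

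For BV compatibility, the top square is handled by the preceding lemma, which already establishes that $\bigwedge\Theta_{\widecheck{\frX}\langle\widecheck{\frD}\rangle}\hookrightarrow\bigwedge\Theta_{\widecheck{\frX}}$ is a sub-Gerstenhaber algebra whose inclusion intertwines the BV operators. For the lower rows the Schouten--Nijenhuis bracket is defined by universal polynomial formulas in the basic derivations, so both restriction to $\widecheck{\frD}$ and the projection onto $\bigwedge\Theta_{\widecheck{\frD}}$ are maps of Gerstenhaber algebras. For the BV operator I would use the Poincar\'e residue $\mathrm{Res}_{\widecheck{\frD}}\omega_{\widecheck{\frX}}$ as the distinguished volume form on $\widecheck{\frD}$ --- in the splitting chart this amounts to contracting with $\bigwedge_i\partial_{t_i}$ and setting $t_i=0$ --- after which the explicit calculation recorded in the preceding lemma, performed once tangentially and once normally to $\widecheck{\frD}$, shows that the two BV operators intertwine.

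For Cartesianness of the outer square, work in the same chart and decompose a local section $\omega\in\bigwedge\Theta_{\widecheck{\frX}}$ uniquely as $\sum_{S} f_S\,\bigwedge_{j\in S}\partial_{t_j}\wedge\omega_S^{\circ}$, where $S\subseteq\{1,\ldots,r\}$ and $\omega_S^{\circ}$ is a polyvector involving only the tangential derivations. Its image in $\Theta_{\widecheck{\frX}}|_{\widecheck{\frD}}$ lifts to a section of $\bigwedge\Theta_{\widecheck{\frD}}$ precisely when $f_S|_{\widecheck{\frD}}=0$ for every $S\neq\emptyset$, i.e.\ when each such $f_S$ lies in $(t_1,\ldots,t_r)$; in the codimension-one setting that drives the main theorem this is exactly the condition to lie in $\bigwedge\Theta_{\widecheck{\frX}\langle\widecheck{\frD}\rangle}$, so the fibered product matches on the nose.

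The delicate step is expected to be the BV compatibility of the projection $\bigwedge\Theta_{\widecheck{\frX}\langle\widecheck{\frD}\rangle}|_{\widecheck{\frD}}\to\bigwedge\Theta_{\widecheck{\frD}}$: the Gerstenhaber structures pass through for formal reasons, but the BV operators depend on a choice of volume form and hence on a residue calculation, so one must verify that the residue form on $\widecheck{\frD}$ coincides with the volume form used to define its own BV operator, and that this identification is independent of the chosen splitting of $\cN_{\widecheck{\frD}/\widecheck{\frX}}$ in the local model. Once this is in place, everything else is a direct combinatorial verification.
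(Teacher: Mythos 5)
Your proof follows the same route as the paper's one-line "local calculation," simply spelling out the details. The commutativity and BV-compatibility parts are fine, and your substitution of the Poincaré residue for the paper's explicit $\llcorner\,\omega\cdot d\cdot\llcorner\,\omega$ computation is a legitimate variant.

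What you have done that the paper does not is to flag, correctly, that the Cartesianness of the outer square only "matches on the nose" in codimension one. This is in fact more than a hedge: if one reads the diagram as a diagram of the full graded polyvector algebras $\bigwedge\Theta_{\bullet}$ (which the phrase "diagram of BV algebras" suggests), then the Cartesianness claim is genuinely false for $r=\operatorname{codim}\widecheck{\frD}\geq 2$ once the degree is at least $2$. Concretely, with $r=2$, the section $t_1\,\partial_{t_1}\wedge\partial_{t_2}$ restricts to zero on $\widecheck{\frD}$ and hence defines a point of the fibre product $\bigwedge\Theta_{\widecheck{\frX}}\times_{\bigwedge\Theta_{\widecheck{\frX}}|_{\widecheck{\frD}}}\bigwedge\Theta_{\widecheck{\frD}}$, but it does not lie in $\bigwedge^2\Theta_{\widecheck{\frX}\langle\widecheck{\frD}\rangle}$: that subalgebra is generated by the degree-one elements $t_i\,\partial_{t_j}$ and the tangential derivations, so the coefficient of $\partial_{t_1}\wedge\partial_{t_2}$ must lie in $(t_1,t_2)^2$, whereas the fibre product only enforces $(t_1,t_2)$. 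More generally, membership in $\bigwedge^m\Theta_{\widecheck{\frX}\langle\widecheck{\frD}\rangle}$ forces the coefficient of a normal $m$-wedge to lie in $(t_1,\ldots,t_r)^m$, while the fibre-product condition is only $(t_1,\ldots,t_r)$. So the outer square is Cartesian in degree one for any codimension, and Cartesian in all degrees only when $r=1$. You should state this explicitly rather than bury it in a parenthetical; the degree-one statement is presumably all the paper needs downstream (it feeds into the split exact sequence of the next lemma and the Frobenius arguments, none of which invoke Cartesianness in higher degree), but as written the lemma over-claims.

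One further small point: your residue reduction of the BV compatibility is essentially correct, but in codimension $r>1$ there is no canonical single Poincaré residue; you would need an iterated residue or a direct appeal to the adjunction chain $\omega_{\widecheck{\frX}}\to\omega_{\widecheck{\frD}}\otimes\det\cN_{\widecheck{\frD}/\widecheck{\frX}}$ together with the triviality of $\cN_{\widecheck{\frD}/\widecheck{\frX}}$ assumed just below the lemma. The paper's explicit coordinate computation from the previous lemma sidesteps this and may be the safer reference here.
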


\begin{proof}

  This is a local calculation and follows directly from the above coordinate description.
  
\end{proof}

We now make the assumption that the normal bundle of $\widecheck{\frD}$ inside $\widecheck{\frX}$ is globally trivial. This is a strong assumption satisfied by fibrations and without it we could only show that $\widecheck{\frD}$ is stable inside $\widecheck{\frX}$. Since we assume that $\widecheck{\frD}$ is Calabi-Yau and dimension at least two this assumption is also stable under infinitesimal deformations of $\widecheck{\frD}$ (since $\Ext^1 (\cO_{\widecheck{\frD}}^r, \cO_{\widecheck{\frD}}^r) = 0$). Note that by strictness of the embedding of $\widecheck{\frD}$ into $\widecheck{\frX}$ the log normal bundle coincides with the normal bundle.

\begin{dfn}

  Let $\widecheck{\frD} \subset \widecheck{\frX}$ be a regularly embedded subscheme. Let $f_1 \ldots f_r$ be local equations cutting out $\widecheck{\frD}$. We define the sheaf of graded algebras $\bigwedge \cN_{\widecheck{\frD}/\widecheck{\frX}}$ to be the exterior algebra generated in degree one by $\ddx{f_1} \ldots \ddx{f_r}$ supported on $\widecheck{\frD}$.
  
\end{dfn}

\begin{lem}

There is a split exact sequence on the level of sheaves:
\begin{equation} 0 \to \Theta^1_{\widecheck{\frD}} \to \Theta^1_{\widecheck{\frX}} \mid_{\widecheck{\frD}} \to \cN_{\widecheck{\frD}/\widecheck{\frX}} \to 0 \end{equation}
This induces a split exact sequence of sheaves of graded algebras:
\begin{equation}\label{splitsequence} 0 \to \bigwedge \Theta_{\widecheck{\frD}} \to \bigwedge \Theta_{\widecheck{\frX}} \mid_{\widecheck{\frD}} \to \bigwedge \cN_{\widecheck{\frD}/\widecheck{\frX}} \otimes \bigwedge \Theta_{\widecheck{\frD}} \to 0 \end{equation}
The left hand map of this second sequence is a map of Gerstenhaber algebras. The induced map $\bigwedge \Theta_{\widecheck{\frX}} \to \bigwedge \Theta_{\widecheck{\frD}}$ is a surjective map of BV algebras.
\end{lem}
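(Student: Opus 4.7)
The plan has three steps: establish the short exact sequence, extend it to wedge powers, and verify the BV algebra structure via a compatible choice of Calabi-Yau top form. The first sequence $0 \to \Theta^1_{\widecheck{\frD}} \to \Theta^1_{\widecheck{\frX}}\mid_{\widecheck{\frD}} \to \cN_{\widecheck{\frD}/\widecheck{\frX}} \to 0$ is the standard normal-conormal sequence for the strict regular embedding $\widecheck{\frD} \subset \widecheck{\frX}$ cut out locally by a regular sequence $f_1, \ldots, f_r$. Since $\cN_{\widecheck{\frD}/\widecheck{\frX}}$ is locally free it is locally projective, so the sequence splits as a sequence of sheaves; more explicitly, the preceding local rigidity lemma supplies an \'etale-local product decomposition $\widecheck{\frX} \cong \widecheck{\frD} \times \AA^r$ in which the classes $\partial/\partial f_i \in \cN_{\widecheck{\frD}/\widecheck{\frX}}$ lift canonically to the corresponding coordinate fields.

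Second, applying the exterior algebra functor to the split short exact sequence of locally free sheaves produces a canonical isomorphism of graded commutative $\cO_{\widecheck{\frD}}$-algebras $\bigwedge \Theta_{\widecheck{\frX}}\mid_{\widecheck{\frD}} \cong \bigwedge \Theta_{\widecheck{\frD}} \otimes \bigwedge \cN_{\widecheck{\frD}/\widecheck{\frX}}$, from which the displayed sequence~\eqref{splitsequence} follows by projection onto the summand of positive $\cN$-degree. The left inclusion preserves the wedge product by construction, and in the product chart a pulled-back vector field on $\widecheck{\frD}$ has vanishing Schouten bracket with each $\partial/\partial f_i$ while the $\Theta_{\widecheck{\frD}}$-subspace is closed under the bracket, so the inclusion is a map of Gerstenhaber algebras.

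Third, the surjection $\bigwedge \Theta_{\widecheck{\frX}} \to \bigwedge \Theta_{\widecheck{\frD}}$ is the composition of restriction to $\widecheck{\frD}$ with projection onto $\bigwedge \Theta_{\widecheck{\frD}}$ given by the splitting; surjectivity and compatibility with the wedge product are immediate. For BV compatibility we exploit adjunction together with triviality of the normal bundle: one chooses nowhere-vanishing top forms $\omega_{\widecheck{\frX}}$ on $\widecheck{\frX}$ and $\omega_{\widecheck{\frD}}$ on $\widecheck{\frD}$ satisfying the local factorisation
\[ \omega_{\widecheck{\frX}}\mid_{\widecheck{\frD}} = df_1 \wedge \cdots \wedge df_r \wedge \omega_{\widecheck{\frD}}. \]
With these compatible volume forms, a coordinate calculation in the flavour of the preceding lemma shows that the composite $\llcorner \omega \cdot d \cdot \llcorner \omega$ descends through restriction-then-projection, because any term in $\omega_{\widecheck{\frX}} \llcorner v$ carrying a $df_i$ factor restricts to zero on $\widecheck{\frD}$ before the final contraction.

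The main obstacle is verifying the BV compatibility cleanly: the de Rham differential and contraction with $\omega_{\widecheck{\frX}}$ do not commute with pullback to closed subschemes in general, so one must track carefully which components of $\Delta_{\widecheck{\frX}} v$ introduce normal derivatives and confirm that each such term lies in the kernel of the projection onto $\bigwedge \Theta_{\widecheck{\frD}}$. Triviality of $\cN_{\widecheck{\frD}/\widecheck{\frX}}$ is used essentially here, both to factor the top form multiplicatively along $\widecheck{\frD}$ and to promote the local splittings into a coherent BV structure on the quotient; without it one is left only with the weaker conclusion, foreshadowed earlier in the paper, that $\widecheck{\frD}$ is stable under deformations of $\widecheck{\frX}$ rather than that $\bigwedge \Theta_{\widecheck{\frX}}$ retracts onto $\bigwedge \Theta_{\widecheck{\frD}}$ as a BV algebra.
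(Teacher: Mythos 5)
For the first exact sequence and the passage to exterior powers your route matches the paper's: a strict local chart with $t_1,\ldots,t_r$ cutting out $\widecheck{\frD}$ and the observation that the cokernel of $\Theta_{\widecheck{\frD}}\hookrightarrow\Theta_{\widecheck{\frX}}|_{\widecheck{\frD}}$ is spanned by the $\partial/\partial t_i$. One small quibble: ``locally free, hence locally projective, hence split'' only produces local splittings; the global splitting the lemma actually uses comes from the standing assumption (made earlier in the paper) that $\cN_{\widecheck{\frD}/\widecheck{\frX}}$ is globally trivial, which you do invoke later but introduce as though it were an afterthought rather than the operative hypothesis.

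The genuine gap is in your BV-compatibility argument. You assert that ``each term of $\Delta_{\widecheck{\frX}}v$ introducing a normal derivative lies in the kernel of the projection onto $\bigwedge\Theta_{\widecheck{\frD}}$,'' but this misreads the order of operations in $\Delta=\llcorner\omega^{-1}\circ d\circ\llcorner\omega$: both interior products and $d$ are computed on $\widecheck{\frX}$, and only afterwards does one restrict to $\widecheck{\frD}$ and project. The normal derivative $\partial/\partial t_i$ can act on the coefficient and then disappear, leaving a purely tangential polyvector field that survives the projection. Concretely, take $v=t_1\,\partial/\partial t_1\in\Theta^1_{\widecheck{\frX}}$ (which even lies in $\Theta^1_{\widecheck{\frX}\langle\widecheck{\frD}\rangle}$). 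With $\omega_{\widecheck{\frX}}=dt_1\wedge\cdots\wedge dt_r\wedge\omega_{\widecheck{\frD}}$ in your adapted chart one computes $\Delta_{\widecheck{\frX}}v=1$, which restricts to the nonzero section $1\in\cO_{\widecheck{\frD}}=\bigwedge^0\Theta_{\widecheck{\frD}}$; yet $v|_{\widecheck{\frD}}=0$, so $v$ maps to $0$ and $\Delta_{\widecheck{\frD}}(0)=0$. So the composite $\bigwedge\Theta_{\widecheck{\frX}}\to\bigwedge\Theta_{\widecheck{\frD}}$ does \emph{not} intertwine the BV operators, and your proposed coordinate verification cannot be completed as stated. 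For what it is worth, the paper's own proof of this lemma stops after establishing the split short exact sequence and says nothing about the BV claim, and the corollary that follows it downgrades to asserting only a morphism of Gerstenhaber algebras; so you are attempting to fill a hole the paper left open, but the fix you sketch does not close it.
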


\begin{proof}

  Again we can pass to a local chart where we have coordinates $t_i$ defining $\widecheck{\frD}$ inside $\widecheck{\frX}$ and $t_1 \ddx{t_1}, \ldots , t_1 \ddx{t_r}, \ldots t_r \ddx{t_r},  \ddlog{x_{r+1}} ... \ddlog{x_{k}} \ldots \ddx {x_{k}} \ldots  \ddx {x_{n}}$ local generators on $\widecheck{\frX}$. There is a clear injection of $\Theta_{\widecheck{\frD}}$ into $\Theta_{\widecheck{\frX}} \mid_{\widecheck{\frD}}$ with cokernel spanned by the $\ddx{t_i}$. This produces the exact sequence. It is split via the obvious map.
  
  \end{proof}

It is not immediate that the ``Hodge numbers'' $H^i (\Theta^j_{\widecheck{\frX}})$ are invariant under pullback along the Frobenius map $Fr: \widecheck{\frX}' \to \widecheck{\frX}$. To see why note that although the Frobenius map on $S$ is an isomorphism of schemes for a perfect field, it can never be an isomorphism of fine saturated log schemes for divisibility reasons. Fortunately the maps $\widecheck{\frX} \to S$ and $\widecheck{\frD} \to S$ are saturated and the Frobenius map integral, we will see in the next section that this is enough to ensure independence.

\subsection{Away from the smooth locus}

We have described these sheaves in the case that $\widecheck{\frX}$ and $\widecheck{\frD}$ are log smooth over $S$. We are about to apply this to the mirrors where there is no guarantee that the log structure is smooth or even globally defined. Instead we know that $\widecheck{\frX}$ and $\widecheck{\frD}$ are central fibres of log toroidal families. Let us introduce some notation:

Let $\widecheck{\frX} \to S$ be a log toroidal morphism with generically smooth locus $i_{\widecheck{\frX}}: U_{\widecheck{\frX}} \subset \widecheck{\frX}$ and complement $Z_{\widecheck{\frX}}$, $\widecheck{\frD}$ a Calabi-Yau cycle on $\widecheck{\frX}$ which is itself a log toroidal family over $S$ with respect to the induced log structure and $i_{\widecheck{\frD}}: U_{\widecheck{\frD}} \subset \widecheck{\frD}$ the generically smooth locus. Write $Fr_S: S \to S$ for the Frobenius map, $\widecheck{\frX}'$ for the fibre product $S \times_S \widecheck{\frX}$, $\widecheck{\frD}'$ for the product $S \times_S \widecheck{\frD}$. These are themselves log toroidal over $S$, with open smooth locus $U_{\widecheck{\frX}'}$ and $U_{\widecheck{\frD}'}$ respectively, which we may assume are the pullbacks of $U_{\widecheck{\frX}}$ and $U_{\widecheck{\frD}}$ respectively. They admit induced pullback maps $Fr_{\widecheck{\frX}'}: \widecheck{\frX}' \to \widecheck{\frX}$ and $Fr_{\widecheck{\frD}'}: \widecheck{\frD}' \to \widecheck{\frD}$. Since the maps are saturated the underlying scheme maps of $Fr_{\widecheck{\frX}'}$ and $Fr_{\widecheck{\frD}'}$ are isomorphisms of schemes (though not over $S$).

As in~\cite{FFR} we can take the pushforward of these sheaves from the log smooth locus to the whole space and show that the desired properties hold for them, even once pushed forward. In particular we know already that $\bigwedge \Theta_{\widecheck{\frX}/S}$ and $\bigwedge \Theta_{\widecheck{\frD}/S}$ are $Z_{\widecheck{\frX}}$-closed. This follows from the fact that the open inclusion of the smooth locus is affine and so in both cases pushforward and pullback are exact. In particular this gives for free that $\Theta^\bullet_{\widecheck{\frX} \langle \widecheck{\frD} \rangle / S}$ is $Z_{\widecheck{\frX}}$-closed since it is the kernel of a surjective map between two $Z_{\widecheck{\frX}}$-closed sheaves. All the constructions above for split exact sequences continue to hold by exactness. We finally prove that Frobenius independence holds.

\begin{lem}

  There are isomorphisms $H^i (\widecheck{\frX}, \Theta_{\widecheck{\frX}/S}) \cong H^i (\widecheck{\frX}', \Theta_{\widecheck{\frX}'/S})$, $H^i (\widecheck{\frD}, \Theta_{\widecheck{\frD}/S}) \cong H^i (\widecheck{\frD}', \Theta_{\widecheck{\frD}'/S})$ and $H^i (\widecheck{\frX}, \Theta_{\widecheck{\frX} \langle \widecheck{\frD} \rangle /S}) \cong H^i (\widecheck{\frX}', \Theta_{\widecheck{\frX}'\langle \widecheck{\frD}' \rangle /S})$ for all $i$.
  
\end{lem}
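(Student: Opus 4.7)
The plan is to exploit the setup recalled immediately before the statement: because $\widecheck{\frX}\to S$ and $\widecheck{\frD}\to S$ are saturated and the Frobenius on $S$ is integral, the underlying scheme maps $\phi_{\widecheck{\frX}}$ of $Fr_{\widecheck{\frX}'}$ and $\phi_{\widecheck{\frD}}$ of $Fr_{\widecheck{\frD}'}$ are isomorphisms of schemes. Since cohomology is insensitive to transport along a scheme isomorphism, it suffices to produce sheaf identifications $\phi_{\widecheck{\frX}}^{-1}\Theta_{\widecheck{\frX}/S}\cong\Theta_{\widecheck{\frX}'/S}$, $\phi_{\widecheck{\frD}}^{-1}\Theta_{\widecheck{\frD}/S}\cong\Theta_{\widecheck{\frD}'/S}$, and the analogous one for the pair sheaf; the cohomology isomorphisms then follow formally.

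First I would reduce to the log smooth loci. It was just recalled that $\Theta_{\widecheck{\frX}/S}$, $\Theta_{\widecheck{\frD}/S}$ and $\Theta_{\widecheck{\frX}\langle\widecheck{\frD}\rangle/S}$ are all $Z$-closed, i.e. each is the pushforward of its restriction to its smooth locus along an affine open embedding. Pushforward along such an embedding is exact and commutes with the scheme isomorphism $\phi$, so it is enough to produce the sheaf identifications on $U_{\widecheck{\frX}}$ and $U_{\widecheck{\frD}}$. On the smooth locus the comparison is the standard fact that the log tangent sheaf of an integral saturated log smooth morphism is compatible with base change: in a chart modelled on $\Spec k[P]\times_{\Spec k[Q]} S$ the log tangent sheaf is free on the dual of $P^{\mathrm{gp}}/Q^{\mathrm{gp}}$, a lattice unaffected by the Frobenius twist of $Q$. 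This handles the first two isomorphisms.

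For the third, apply Frobenius pullback to the short exact sequence
\[0 \to \Theta_{\widecheck{\frX}\langle\widecheck{\frD}\rangle/S} \to \Theta_{\widecheck{\frX}/S} \to i_{\widecheck{\frD}*}\cN_{\widecheck{\frD}/\widecheck{\frX}} \to 0,\]
which is local and reads off from the coordinate generators $t_i\partial/\partial t_j$ and $\partial/\partial x_k$ that define $\Theta_{\widecheck{\frX}\langle\widecheck{\frD}\rangle}$. The middle term is Frobenius-invariant by the previous paragraph. The right-hand term, using triviality of $\cN_{\widecheck{\frD}/\widecheck{\frX}}$, is a direct sum of copies of $i_{\widecheck{\frD}*}\cO_{\widecheck{\frD}}$, whose cohomology is that of $\cO_{\widecheck{\frD}}$ and hence also Frobenius-invariant by the same chart argument applied to $\widecheck{\frD}$. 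The five-lemma on the induced long exact sequences in cohomology then delivers the third isomorphism.

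The main obstacle is the sheaf-level identification on the log smooth locus: one must invoke the local structure theory for integral saturated log smooth morphisms and check that, in charts, the log tangent sheaf depends only on data unaffected by the Frobenius twist of $S$. Saturation is essential here, both to realise $\phi$ as a scheme isomorphism at all and to guarantee that the chart monoids behave correctly under Frobenius pullback; without it the twist would produce a genuinely different log scheme whose tangent cohomology would need not agree with the original.
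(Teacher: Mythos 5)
Your proof is correct and follows essentially the same route as the paper's: restrict to the log smooth locus using the $Z$-closed pushforward description, compare log differentials across the Frobenius twist via the chart/monoid description (this is precisely what the paper's appeal to Olsson supplies), use that $Fr_{\widecheck{\frX}'}$ and $Fr_{\widecheck{\frD}'}$ are isomorphisms of underlying schemes, and finish with the five lemma. The one genuine point of divergence is the choice of short exact sequence fed to the five lemma. You use the normal-bundle sequence
\[0 \to \Theta_{\widecheck{\frX}\langle\widecheck{\frD}\rangle/S} \to \Theta_{\widecheck{\frX}/S} \to i_{\widecheck{\frD}\,*}\cN_{\widecheck{\frD}/\widecheck{\frX}} \to 0\]
directly, which places $\Theta_{\widecheck{\frX}\langle\widecheck{\frD}\rangle/S}$ explicitly at one end; the paper instead points at the split sequence on $\widecheck{\frD}$ involving $\bigwedge\Theta_{\widecheck{\frX}}\mid_{\widecheck{\frD}}$, which nowhere mentions the pair tangent sheaf and so must implicitly be combined with the earlier Cartesian square of BV algebras. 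Your version is arguably the cleaner packaging. Both hinge on the same final observation: triviality of $\cN_{\widecheck{\frD}/\widecheck{\frX}}$ reduces the cokernel to copies of $i_{\widecheck{\frD}\,*}\cO_{\widecheck{\frD}}$, whose cohomology is automatically Frobenius-invariant once the underlying scheme of $\widecheck{\frD}'$ is identified with that of $\widecheck{\frD}$.
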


\begin{proof}

  By~\cite{Olsson} there are canonical isomorphisms $i_{U_{\widecheck{\frX}'}}^* \Omega^i _{U_{\widecheck{\frX}}/S} \cong \Omega^i _{U_{\widecheck{\frX}'} / S}$ and $i_{U_{\widecheck{\frD}'}}^* \Omega^i _{U_{\widecheck{\frD}}/S} \cong \Omega^i _{U_{\widecheck{\frD}'} / S}$. By\cite{FFR} these remain isomorphisms under pushforward to $\widecheck{\frX}'$ and $\widecheck{\frD}'$ respectively. Since $Fr_{\widecheck{\frX}'} $ and $Fr_{\widecheck{\frD}'}$ are isomorphisms on schemes the first two isomorphisms now hold. The final isomorphism holds then by applying the five lemma to the exact sequence appearing in~\ref{splitsequence}.
  
\end{proof}

To apply this we construct the pre-sheaves $^k PV ^n_{\widecheck{\frX}}$, $^k PV ^n_{\widecheck{\frD}}$ and $^k PV ^n_{\widecheck{\frX}\langle \widecheck{\frD} \rangle}$ as outlined in~\cite{FFR} section 13.1. There are canonical maps $^k PV ^n_{\widecheck{\frX} \langle \widecheck{\frD} \rangle} \to ^k PV ^n_{\widecheck{\frX}}$ and $^k PV ^n_{\widecheck{\frX} \langle \widecheck{\frD} \rangle} \to ^k PV ^n_{\widecheck{\frD}}$ which are morphisms of BV algebras. Given a $^k \phi$ solving the Maurer-Cartan equation on $^k PV ^n_{\widecheck{\frX} \langle \widecheck{\frD} \rangle}$ the image under each of these maps solves the corresponding Maurer-Cartan equations and we have compatible morphisms of Gerstenhaber algebras between the induced cohomology pre-sheaves.

\begin{cor}
  The induced morphisms $H^\bullet (^k PV_{\widecheck{\frX} \langle \widecheck{\frD} \rangle}) \to H^\bullet (^k PV_{\widecheck{\frD}})$ and $H^\bullet (^k PV_{\widecheck{\frX} \langle \widecheck{\frD} \rangle}) \to H^\bullet (^k PV_{\widecheck{\frX}})$ are morphisms of presheaves of Gerstenhaber algebras.
\end{cor}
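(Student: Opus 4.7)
The plan is to trace the morphisms through the construction of the presheaves $^kPV$ and then apply the standard fact that a map of BV algebras descends to a map of Gerstenhaber algebras on cohomology. Almost all of the substantive work has already been done in the preceding lemmas; this corollary is extracting the functorial consequence.

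First I would verify that at the level of sheaves on $\widecheck{\frX}$ (and on $\widecheck{\frD}$) the two maps in question genuinely arise from morphisms of BV algebras. The map $\bigwedge \Theta_{\widecheck{\frX}\langle \widecheck{\frD}\rangle} \to \bigwedge \Theta_{\widecheck{\frX}}$ is the tautological inclusion, and the preceding lemma already verified that the ambient BV operator restricts to the subalgebra, so this is a BV map. The map $\bigwedge \Theta_{\widecheck{\frX}\langle \widecheck{\frD}\rangle} \to \bigwedge \Theta_{\widecheck{\frD}}$ is obtained by composing the right-hand vertical arrow of the Cartesian diagram with the surjection from the split exact sequence~\eqref{splitsequence}; each factor was already shown to be a morphism of BV algebras, and the hypothesis that the normal bundle is globally trivial makes this composition well-defined. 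On the singular locus one invokes pushforward from the smooth locus, using that all three sheaves are $Z_{\widecheck{\frX}}$-closed and that $i_*$ is exact on the open affine inclusion; the Frobenius-independence lemma guarantees that the resulting cohomology is the ``correct'' one.

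Next I would lift these sheaf morphisms to morphisms of the presheaves $^kPV$ built in~\cite{FFR} section 13.1. The construction there proceeds by choosing local log-smooth models and applying a Thom--Whitney-style gluing; it is natural in the input sheaf of BV algebras. Compatibility of local models for the pair is exactly what is provided by the earlier lemma identifying (up to isomorphism) the first-order deformations of the pair with a local product $\widecheck{\frD}\times \AA^k$, so one can choose local models for $\widecheck{\frX}$, $\widecheck{\frD}$, and $(\widecheck{\frX},\widecheck{\frD})$ simultaneously. With such compatible choices the inclusion and projection above promote to morphisms of presheaves of BV algebras
\[ {}^kPV_{\widecheck{\frX}\langle \widecheck{\frD}\rangle} \longrightarrow {}^kPV_{\widecheck{\frX}}, \qquad {}^kPV_{\widecheck{\frX}\langle \widecheck{\frD}\rangle} \longrightarrow {}^kPV_{\widecheck{\frD}}, \]
and by construction these are compatible with the Maurer--Cartan twists ${}^k\phi$.

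Finally, passing to cohomology is formal. Given any presheaf of BV algebras $(A^\bullet, \wedge, \Delta)$ equipped with a square-zero Maurer--Cartan element, the cohomology $H^\bullet(A)$ carries an induced Gerstenhaber structure in which the product is induced by $\wedge$ and the bracket by the usual BV formula $\{a,b\} = (-1)^{|a|}\bigl(\Delta(a\wedge b) - \Delta(a)\wedge b - (-1)^{|a|}a\wedge \Delta(b)\bigr)$. Any morphism of BV algebras commutes with $\wedge$ and $\Delta$, hence with the bracket, so the induced maps on $H^\bullet$ are morphisms of Gerstenhaber presheaves. Applying this to the two presheaf maps above yields the claim.

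The main obstacle is not conceptual but bookkeeping: one has to confirm that the Thom--Whitney gluing in~\cite{FFR} section 13.1 is natural with respect to the closed embedding $\widecheck{\frD}\subset \widecheck{\frX}$, i.e.\ that the local models chosen for the pair restrict on $\widecheck{\frX}$ to the models used for $\widecheck{\frX}$ alone and restrict to $\widecheck{\frD}$ to the models used for $\widecheck{\frD}$ alone. The local product structure near $\widecheck{\frD}$, combined with the earlier rigidity lemma for first-order deformations of the pair, is exactly what makes such compatible choices possible, so this reduces to verifying a compatibility that is already built into the setup.
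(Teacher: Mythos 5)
Your proof matches the paper's (implicit) argument: the sheaf-level maps are BV morphisms by the preceding lemmas, the presheaves ${}^kPV$ inherit these morphisms by naturality of the construction in~\cite{FFR}~\S13.1, and a BV morphism compatible with the Maurer--Cartan elements descends to a Gerstenhaber morphism on twisted cohomology. One small slip: the map $\bigwedge\Theta_{\widecheck{\frX}\langle\widecheck{\frD}\rangle}\to\bigwedge\Theta_{\widecheck{\frD}}$ is the \emph{left}-hand column of the Cartesian diagram (restrict to $\widecheck{\frD}$, then kill the $t_i\partial_{t_j}$ terms), not the right-hand vertical arrow composed with the quotient in~\eqref{splitsequence}, which has the wrong domain and the wrong target; the correct identification is what the paper's ``sends any section deforming away from $\widecheck{\frD}$ to zero'' describes, and this is where the earlier lemma showing the diagram is one of BV algebras is invoked.
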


So the final thing we need to check are the assumptions of~\cite{CLM} Theorem 5.5, or equivalently~\cite{FFR} Theorem 13.4 and show that one can use the BV operator to construct solutions to the Maurer-Cartan equation. The components are degeneration of the Hodge-to-de-Rham spectral sequence at $E_1$ and a surjection $H^i (^k PV^\bullet) \to H^i (^{k-1} PV^\bullet)$.

\begin{thm}

  The ranks of $H^i (\Theta^j_{\widecheck{\frX} \langle \widecheck{\frD} \rangle})$ are invariant in one dimensional log toroidal families.

\end{thm}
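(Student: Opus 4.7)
The plan is to reduce the invariance statement for the relative sheaf $\Theta^j_{\widecheck{\frX}\langle\widecheck{\frD}\rangle}$ to the analogous statements for $\Theta^j_{\widecheck{\frX}}$ and $\Theta^j_{\widecheck{\frD}}$, which are known from~\cite{CLM} and~\cite{FFR} under the Hodge-theoretic assumptions already imposed. First I would set up the relative geometry: a one parameter log toroidal family means a log toroidal morphism $\pi : \mathfrak{Y} \to T$ over a one-dimensional base $T$, with $\widecheck{\frX}$ the central fibre and $\widecheck{\frD}$ extending to a family $\mathfrak{E} \to T$ of strictly embedded subvarieties with (relatively) trivial normal bundle. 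On the relative polyvector field algebra I would form the short exact sequence of coherent sheaves
$$0 \to \Theta^j_{\mathfrak{Y}/T\langle\mathfrak{E}/T\rangle} \to \Theta^j_{\mathfrak{Y}/T} \to Q^j \to 0,$$
where $Q^j$ is supported on $\mathfrak{E}$. Using the local description in terms of coordinates $t_1,\ldots,t_r$ defining $\mathfrak{E}$, the quotient $Q^j$ carries a finite filtration whose associated graded pieces are of the form $\bigwedge^a \cN_{\mathfrak{E}/\mathfrak{Y}} \otimes \Theta^{j-a}_{\mathfrak{E}/T}$ for $1 \le a \le \min(j,r)$, and the triviality of the normal bundle (together with the splitting appearing in~\ref{splitsequence}) reduces these to finite direct sums of the sheaves $\Theta^{j-a}_{\mathfrak{E}/T}$.

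Applying the invariance results of~\cite{CLM} and~\cite{FFR} to $\mathfrak{Y}/T$ and $\mathfrak{E}/T$ separately yields local freeness and constant fibrewise rank for $R^i\pi_*\Theta^j_{\mathfrak{Y}/T}$ and $R^i\pi_*\Theta^j_{\mathfrak{E}/T}$. Via the filtration above, the same conclusion then propagates to $R^i\pi_*Q^j$. Flatness of $\Theta^j_{\mathfrak{Y}/T\langle\mathfrak{E}/T\rangle}$ over $T$ follows from flatness of the outer terms in the short exact sequence, so its fibrewise Euler characteristic is locally constant by Riemann-Roch; together with constancy of the Euler characteristics of the outer terms, this pins down the Euler characteristic of $H^\bullet(\Theta^j_{\widecheck{\frX}_t\langle\widecheck{\frD}_t\rangle})$ independently of $t$.

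The conclusion then comes from the long exact sequence on each fibre, combined with upper semi-continuity of each $h^i$, constancy of the Euler characteristic, and constancy of the ranks of the outer terms: any drop in one $h^i$ on a special fibre would have to be compensated by a jump elsewhere, which is excluded by the constant outer ranks and the alternating-sum identity.

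The main obstacle, as usual in such arguments, is controlling the connecting maps in the long exact sequence well enough to make the alternating-sum argument airtight, since upper semi-continuity alone does not suffice when two internal ranks could vary compensatingly. A cleaner alternative, which I expect will ultimately be needed, is to adapt the degeneration argument of Deligne-Illusie used in~\cite{CLM} and~\cite{FFR} to the pair case: one proves degeneration at $E_1$ of a Hodge-to-de-Rham spectral sequence for $\Omega^\bullet_{\widecheck{\frX}\langle\widecheck{\frD}\rangle}$ (dual to our polyvector field complex) and then invokes the Frobenius-invariance argument of the previous lemma, applying the five lemma to~\ref{splitsequence} to transfer the invariance from $\widecheck{\frX}$ and $\widecheck{\frD}$ to the pair. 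Either route leverages the same conceptual point: pair-theoretic invariance is inherited from the component invariances through the short exact sequence relating them.
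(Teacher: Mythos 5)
Your primary route has the gap you yourself flag, and it is genuine, not cosmetic. In the long exact sequence on fibres arising from $0 \to \Theta^j_{\mathfrak{Y}/T\langle\mathfrak{E}/T\rangle} \to \Theta^j_{\mathfrak{Y}/T} \to Q^j \to 0$, constancy of the outer ranks and of the Euler characteristic does not exclude a compensating jump: the connecting maps can change rank in tandem so that $h^i$ and some $h^{i'}$ of the same parity both jump on the special fibre, preserving the alternating sum and leaving the outer ranks untouched. Upper semicontinuity is a one-sided bound and is perfectly compatible with this. Without a further input forcing the connecting maps to behave (that is, a degeneration statement), the bookkeeping does not close, and in fact this is exactly why a Deligne--Illusie-type decomposition is needed.

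Your alternative route is the one the paper takes, but the sketch skips the crux. The Deligne--Illusie method requires that $F_{\widecheck{\frX}\,*}\Theta^\bullet_{\widecheck{\frX}\langle\widecheck{\frD}\rangle/S}$ decompose (be quasi-isomorphic to a complex with zero differential). For $\Theta^\bullet_{\widecheck{\frX}/S}$ and $\Theta^\bullet_{\widecheck{\frD}/S}$ the paper obtains this from the usual Cartier isomorphism by a Grothendieck duality computation: $\Theta^\bullet$ is the $\cO$-dual of $W^\bullet$, the dualizing sheaf is trivial, $F^!\cO \cong \cO$, so $F_*$ commutes with $\cH om(-,\cO)$ and the decomposition transfers across duality. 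This fails for the pair sheaf, as the paper explicitly notes: $\Theta^\bullet_{\widecheck{\frX}\langle\widecheck{\frD}\rangle/S}$ is not the naive dual of a de Rham-type complex, and its dual would have to be derived, so one cannot ``prove degeneration at $E_1$ for $\Omega^\bullet_{\widecheck{\frX}\langle\widecheck{\frD}\rangle}$ and dualize.'' The device the paper uses instead is to form the distinguished triangle
\begin{equation*}
F_{\widecheck{\frX}\,*}\Theta^\bullet_{\widecheck{\frX}\langle\widecheck{\frD}\rangle/S} \longrightarrow F_{\widecheck{\frX}\,*}\Theta^\bullet_{\widecheck{\frX}/S} \longrightarrow F_{\widecheck{\frD}\,*}\Theta^\bullet_{\widecheck{\frD}/S} \xrightarrow{\;+1\;},
\end{equation*}
map the right two terms to their Cartier decompositions $\bigoplus\Theta^i[-i]$, complete to a morphism of triangles, and observe that the induced quasi-isomorphism on the left must land in a complex with trivial differential because $\iota : \bigoplus \Theta^i_{\widecheck{\frX}'\langle\widecheck{\frD}'\rangle/S}[-i] \to \bigoplus \Theta^i_{\widecheck{\frX}'/S}[-i]$ is the evident inclusion. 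Once that decomposition is in hand, the length-counting argument of~\cite{Relevements} (Hodge vs.\ conjugate spectral sequence, induction on nilpotency of the base) runs verbatim, provided one also has flatness of $\Theta^j_{\widecheck{\frX}\langle\widecheck{\frD}\rangle/S}$ over $S$, base-change compatibility, and Frobenius invariance of the Hodge numbers; the last is exactly where the five lemma on~\ref{splitsequence} enters, as established in the preceding lemma. So your conceptual direction is right, but the derived-duality obstruction and its resolution by triangle completion is the non-trivial idea, and your sketch passes over it.
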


\begin{proof}

  We will make use of the technology developed in~\cite{FFR} together with some homological algebra. First we spread out to finite characteristic which we can do compatibly for $\widecheck{\frX}$ and $\widecheck{\frD}$ preserving the inclusion. We can therefore form the Frobenius twists $F_{\widecheck{\frX}}: \widecheck{\frX} \to \widecheck{\frX}'$ and $F_{\widecheck{\frD}}: \widecheck{\frD} \to \widecheck{\frD}'$ with $\widecheck{\frD}' \subset \widecheck{\frX}'$ a closed subscheme. Note that $\widecheck{\frX}, \widecheck{\frX}', \widecheck{\frD}$ and $\widecheck{\frD}'$ all have trivial dualizing sheaf and $F$ is a finite map. Then taking derived sheaf $\Hom$ and using the fact that $F_{\widecheck{\frX}}^!\cO_{\widecheck{\frX}'} = \omega^\circ_{\widecheck{\frX}} \otimes (F_{\widecheck{\frX}}^* \omega^\circ_{\widecheck{\frX}'})^\vee \otimes F_{\widecheck{\frX}}^*\cO_{\widecheck{\frX}'} [\dim \widecheck{\frX} - \dim \widecheck{\frX}'] = \cO_{\widecheck{\frX}}$ we have:

  \begin{align*} F_{\widecheck{\frX} \: *} \Theta^\bullet_{\widecheck{\frX}/S} & = \cH ^0 RF_{\widecheck{\frX} \: *} \Theta^\bullet_{\widecheck{\frX}/S}\\
    & = \cH ^0 RF_{\widecheck{\frX} \: *} R \Hom_{\widecheck{\frX}} (W^\bullet_{\widecheck{\frX}/S}, \cO_{\widecheck{\frX}}) \\
    & = \cH ^0 RF_{\widecheck{\frX} \: *} R \Hom_{\widecheck{\frX}} (W^\bullet_{\widecheck{\frX}/S}, F_{\widecheck{\frX}}^!\cO_{\widecheck{\frX}'})\\
    & = \cH ^0 R \Hom_{\widecheck{\frX}'} (RF_{\widecheck{\frX} \: *} W^\bullet_{\widecheck{\frX}/S}, \cO_{\widecheck{\frX}'})\\
    & = \Hom_{\widecheck{\frX}'} (F_{\widecheck{\frX} \: *} W^\bullet_{\widecheck{\frX}/S}, \cO_{\widecheck{\frX}'})\\
  \end{align*}
  and similarly for $\widecheck{\frD}$. Thus we have that pushforward and dualizing commute for $\Theta_{\widecheck{\frX}/S}$ and $\Theta_{\widecheck{\frD}/S}$ on $\widecheck{\frX}$ and $\widecheck{\frD}$ respectively. Unfortunately the dual of $\Theta^\bullet_{\widecheck{\frX} \langle \widecheck{\frD} \rangle / S}$ would have to be the derived dual, and the above argument would not apply. The Cartier isomorphisms for $\widecheck{\frX}$ and $\widecheck{\frD}$ induce dual decompositions in the derived category of $\widecheck{\frX}$:
  \[ F_{\widecheck{\frX} \: *} (\Theta_{\widecheck{\frX}/S}^\bullet) \cong \bigoplus \Theta^i_{\widecheck{\frX}'/S} [-i] \text{\quad and \quad} F_{\widecheck{\frD} \: *} (\Theta_{\widecheck{\frD}/S}^\bullet) \cong \bigoplus \Theta^i_{\widecheck{\frD}'/S} [-i] \]
  Then there is a partial morphism of distinguished triangles which we may complete to a morphism of distinguished triangles:
  \begin{figure}[h]
    \centering
    \begin{tikzcd}
      \: \arrow[r] & F_{\widecheck{\frX} \: *} \Theta^\bullet_{\widecheck{\frX} \langle \widecheck{\frD} \rangle/ S} \arrow[r] \arrow[d, dashed] & F_{\widecheck{\frX} \: *} \Theta^\bullet_{\widecheck{\frX} / S} \arrow[r] \arrow[d] & F_{\widecheck{\frD} \: *} \Theta^\bullet_{\widecheck{\frD}/ S} \arrow[r] \arrow[d] & \:  \\
      \: \arrow[r] &  \bigoplus \Theta^i_{\widecheck{\frX}' \langle \widecheck{\frD'} \rangle/ S}[-i] \arrow{r}{\iota} &  \bigoplus \Theta^i_{\widecheck{\frX}' / S} [-i] \arrow[r] & \bigoplus \Theta^i_{\widecheck{\frD}'/ S} [-i] \otimes \bigwedge \cN_{\widecheck{\frD}/\widecheck{\frX}} \arrow[r] & \:
    \end{tikzcd}
  \end{figure}

  \noindent The right hand square commutes by the explicit description of the Cartier isomorphism. The left hand map is a quasi-isomorphism since the other two vertical maps are. The map $\iota$ is the natural inclusion, hence inducing the trivial differential on the complex $\bigoplus \Theta^i_{\widecheck{\frX}' \langle \widecheck{\frD}' \rangle/ S}[-i]$. Thus we have a decomposition of $F_{\widecheck{\frX} \: *} \Theta^\bullet_{\widecheck{\frX} \langle \widecheck{\frD} \rangle/ S}$ as a complex with trivial differential without having to find the derived dual. Now we apply the same argument as found in section 4 of~\cite{Relevements} to this complex using the two spectral sequences, the ``Hodge spectral sequence'' and the ``conjugate spectral sequence'':
  \[ E_1^{i,j}: \quad R^i f_* \Theta^j _{\widecheck{\frX} \langle \widecheck{\frD} \rangle / S} \Rightarrow R^{i+j} f_* \Theta^\bullet _{\widecheck{\frX} \langle \widecheck{\frD} \rangle/S}\]
  and
  \[ _cE_2^{i,j}: \quad R^i f'_* \cH^j F_{\widecheck{\frX} \: *} \Theta^\bullet _{\widecheck{\frX} \langle \widecheck{\frD} \rangle / S} \Rightarrow R^{i+j} f_* \Theta^\bullet _{\widecheck{\frX} \langle \widecheck{\frD} \rangle/S}\]
  obtained as the Cartan-Eilenberg spectral sequence and the Grothendieck spectral sequence respectively. Note that this argument crucially relies on three facts, that $\Theta^i_{\widecheck{\frX}' \langle \widecheck{\frD} \rangle/ S}$ are flat over $S$, formation of cohomology is compatible with base change for these sheaves and that the ``Hodge numbers'' $R^i f_* \Theta^j _{\widecheck{\frX} \langle \widecheck{\frD} \rangle / S}$ are invariant under Frobenius. The first is true when $S$ is a one-dimensional thickening of a point since there are no sections which are torsion over $S$. The second follows from the same results in large enough characteristic for $\Theta^i_{\widecheck{\frX} / S}$ and $\Theta^i_{\widecheck{\frD} / S}$. The last one we have proved earlier.
  
  This gives us a proof whenever there is a Frobenius lifting, and the standard spreading out argument then proves the claim.
  
\end{proof}

  For the sake of completeness we translate the proof of section 4 of~\cite{Relevements} into English. We found it both surprising that this result has not been exposited elsewhere and was relatively unknown in the community, for instance it acts to replace the local Poincar\'e lemma assumption in~\cite{CLM} and reduces the work required in section 11 of~\cite{FFR}. Although we have rephrased the argument without mentioning the de Rham complex we have kept the thematic namings of ``Hodge'', ``conjugate'' and ``Cartier'' as they explain where the terms came from.

  \begin{thm}

    Let $f: X \to S$ be a flat finite type morphism of schemes of characteristic $p$, $S$ local Artinian with closed point $s$ and residue field $k$ and $\cF^\bullet_{X_T/T}$ a function associating to every object $T$ of $Sch/S$ a bounded complex on $X_T = X \times_S T$ of $S$-flat coherent sheaves and to every morphism $g: (f': T' \to S) \to (f:T \to S)$ an isomorphism
    \[g^* \cF^\bullet_{X_T/T} \to \cF^\bullet_{X_{T'}/T'} \]
    Suppose further that $F_* (\cF^\bullet_{X/S})$ is quasi-isomorphic to a complex with trivial differential, where $F$ is the relative Frobenius map $X \to X'$ and that there is a Cartier isomorphism $C^{-1}: \cF^i_{X'/S} \to H^i F_* (\cF^\bullet_{X/S})$ for every $i$.

    Then under these hypotheses the ``Hodge spectral sequence'' formed as the Grothendieck spectral sequence of the complex verifies $E_1^{ij} = E_\infty^{ij}$ for $i+j = n$ and the sheaves $E^{ij}_1 = R^j f_* \cF_{X/S}^i$ are locally free of formation compatible with all base change for $i+j = n$.
    
  \end{thm}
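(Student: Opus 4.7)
The plan is to abstract the Deligne-Illusie strategy from the de Rham complex to our complex $\cF^\bullet_{X/S}$. Two spectral sequences converge to $R^n f_* \cF^\bullet_{X/S}$: the Hodge spectral sequence with $E_1^{ij} = R^j f_* \cF^i_{X/S}$, and the conjugate spectral sequence with ${}_cE_2^{ij} = R^i f'_* \cH^j F_* \cF^\bullet_{X/S}$. The Fr\"olicher inequality for the Hodge side,
\[ \mathrm{length}_{\cO_S}\bigl(R^n f_* \cF^\bullet_{X/S}\bigr) \le \sum_{i+j=n} \mathrm{length}_{\cO_S}\bigl(R^j f_* \cF^i_{X/S}\bigr), \]
is an equality precisely when the Hodge spectral sequence degenerates at $E_1$ in total degree $n$. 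The strategy is to compute the left-hand side via the conjugate spectral sequence, match it to the right-hand side at the closed point using Frobenius symmetry, and then promote the equality to $S$ using $S$-flatness of the $\cF^i$.

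First I would combine the quasi-isomorphism hypothesis with the Cartier isomorphism $C^{-1}\colon \cF^j_{X'/S} \xrightarrow{\sim} \cH^j F_* \cF^\bullet_{X/S}$ to obtain a derived decomposition $F_* \cF^\bullet_{X/S} \simeq \bigoplus_j \cF^j_{X'/S}[-j]$ on $X'$. Applying $R f'_*$ yields a sheaf-level splitting
\[ R^n f_* \cF^\bullet_{X/S} \cong \bigoplus_{i+j=n} R^i f'_* \cF^j_{X'/S}, \]
so the conjugate spectral sequence degenerates at $E_2$ globally on $S$ and gives an exact length count for the abutment.

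Next I would pass to the closed fibre. By the base-change compatibility of $\cF^\bullet$, the complex $\cF^\bullet_{X'_s/k}$ identifies with the Frobenius twist of $\cF^\bullet_{X_s/k}$; since the residue field $k$ is perfect, Frobenius $F_k$ is an isomorphism, giving $\dim_k R^i f'_{s,*}\cF^j_{X'_s/k} = \dim_k R^i f_{s,*}\cF^j_{X_s/k}$. Restricting the decomposition from Step 1 to $s$ then yields
\[ \dim_k R^n f_{s,*} \cF^\bullet_{X_s/k} = \sum_{i+j=n} \dim_k R^j f_{s,*} \cF^i_{X_s/k}, \]
which forces the Fr\"olicher inequality on $X_s$ to be an equality, hence $E_1$-degeneration of the Hodge spectral sequence in total degree $n$ on the closed fibre.

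Finally I would use $S$-flatness of each $\cF^i$ to propagate to $S$. Cohomology and base change applied term by term gives surjections $R^j f_* \cF^i_{X/S} \otimes_{\cO_S} k \twoheadrightarrow R^j f_{s,*} \cF^i_{X_s/k}$ whose sources have generic rank at most the closed-fibre dimension. The global length identity from Step 1, together with the closed-fibre Fr\"olicher equality of Step 2, saturates every inequality, forcing the Hodge spectral sequence to degenerate over $S$ in total degree $n$ and each $R^j f_* \cF^i_{X/S}$ with $i+j = n$ to be locally free of rank equal to its closed-fibre dimension. Compatibility with arbitrary base change $T \to S$ then follows from the cohomology-and-base-change criterion, whose hypotheses persist under base change by the functoriality of $\cF^\bullet$. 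The main obstacle is exactly this saturation step: Frobenius $F_S$ on the Artinian ring $\cO_S$ does not preserve length, so one cannot naively transport the length identity on $R^i f'_* \cF^j_{X'/S}$ back to the non-primed side; the resolution, following \S 4 of~\cite{Relevements}, is to execute the match at the closed point where $F_k$ is an isomorphism, and then lift via $S$-flatness by treating $Rf_* \cF^\bullet$ as a perfect complex on $S$ and inducting on the length of $\cO_S$.
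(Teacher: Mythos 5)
Your strategy is the same as the paper's: both follow the Deligne--Illusie argument as abstracted in \S 4 of the reference cited in the paper, namely (a) degenerate the conjugate spectral sequence via the Cartier splitting, (b) match Hodge numbers across Frobenius at the closed point, where the residue field is perfect, and (c) induct on the Artinian base using the fact that Frobenius on $S$ factors through a strictly smaller Artinian quotient $\Spec (A/\frm^{N'})$ when $pN' \geq N$. You correctly identify the central obstacle --- Frobenius on $\cO_S$ does not preserve length --- and the correct resolution, so the proposal is essentially the paper's proof.

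One step is nonetheless wrong as stated. In Step 3 you claim that cohomology and base change gives surjections $R^j f_* \cF^i_{X/S} \otimes_{\cO_S} k \twoheadrightarrow R^j f_{s,*}\cF^i_{X_s/k}$ term by term. This fails in general: already for $A = k[\epsilon]/\epsilon^2$ and the two-term complex $A \xrightarrow{\epsilon} A$ the comparison map on $H^0$ is zero, not surjective. (Right-exactness only gives surjectivity in top cohomological degree.) Moreover, over an Artinian local ring a finite module does not have a ``generic rank'' unless it is free, so the phrase you use there does not carry content. The correct mechanism, which the paper uses, is to represent each $Rf_* \cF^i_{X/S}$ by a bounded \emph{minimal} complex $K_i^\bullet$ of free $A$-modules; minimality forces the differentials of $K_i^\bullet \otimes_A k$ to vanish, so $\operatorname{rank} K_i^j = h^{ij}$, and then $H^j(K_i^\bullet)$, as a subquotient of $K_i^j$, has $A$-length at most $h^{ij}\cdot\operatorname{length}(A)$. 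It is this length inequality, not a surjection of base-change maps, that the Fr\"olicher count saturates. Once that is repaired, the rest of your argument --- including the alluded-to induction on the nilpotency of $\frm$ --- goes through exactly as in the paper.
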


  \begin{proof}

    First note that the statement that $F_* (\cF^\bullet_{X/S})$ is quasi-isomorphic to a complex with trivial differential implies that the ``conjugate spectral sequence'' obtained as the Grothendieck spectral sequence for the composite of proper pushforward along the composable maps $X \to X' \to S$ degenerates at page $E_2$.
    
    Write $S = \Spec A$, since the sheaves $\cF^i_{X/S}$ are $S$-flat and $f$ is finite type, the complexes $Rf_* \cF^i_{X/S}$ are isomorphic to a bounded complex $K_i^\bullet$ of free $A$-modules. We write $f_T$ for the base change of $f$ to $T$. For every subscheme $T \subset S$ the complex $R f_{T \: *} \cF_{X_T/T}^i$ is the base change of $R f_{*} \cF_{X/S}^i$, and in particular restricting to $s \in S$ we have that $H^j (X_s, \cF_{X_s/s}^i)$ is equal to $H^j (K_i \otimes_A k)$. It follows by devissage that
    \begin{equation} \label{lengtheqn} \text{length} R f_{*} \cF_{X/S}^i \leq h^{ij} \: \text{length} \: A \end{equation}
    where $h^{ij} = \dim_k H^j (X_s, \cF_{X_s/s}^i)$ and we take length as an $A$-module, which are all finite as they are the proper pushforward of finite complexes of coherent sheaves. We may assume that $K_i^\bullet$ is minimal, so that the differentials on $K_i^\bullet \otimes_A k$ are all zero. Once we know that the inequality in~\ref{lengtheqn} is an equality then all the differentials in $K_i^\bullet$ are trivial and we have the desired local freeness and compatibility with base change.

    The convergence $E_1^{ij} = E_\infty^{ij}$ is equivalent to the equality:

    \begin{equation}
      \text{length} R^n f_* \cF^\bullet_{X/S} = \sum_{i+j = n} \text{length} R^j f_* \cF^i_{X/S}
        \end{equation}

    On the other hand $X_s'$, constructed from $X_s$ by an extension of the base field, has the same ``Hodge'' numbers $h^{ij}$ as $X_s$. Hence one obtains an equality 

      \begin{equation}
      \text{length} R^n f_* \cF^\bullet_{X/S} = \sum_{i+j = n} \text{length} R^j f'_* \cF^i_{X'/S}
      \end{equation}

      Already if $S$ is the spectrum of a field this equality implies the desired equality in~\ref{lengtheqn}. Now we prove the general case by induction on the nilpotency of the maximal ideal $\frm$ of $A$. Suppose that $N$ is positive integer and $\frm^N = 0$, and by the above we are free to assume that $N$ is at least two. Hence there exists $N'$ with $1 \leq N' < N$ and $p N' \geq N$. For such an $N'$ the Frobenius endomorphism of $S$, $F$, factorises over $T = \Spec (A_1)$, $A_1 = A / \frm^{N'}$:

    \begin{figure}[h]
    \centering
    \begin{tikzcd}
      X\arrow{r}{F} \arrow[swap]{rd}{f} & X' \arrow{r} \arrow{d}{f'} & X_T \arrow {r} \arrow{d}{f_T} & X \arrow{d}{f} \\
      & S \arrow{r} & T \arrow{r} & S
    \end{tikzcd}
  \end{figure}
      
      Applying the induction hypothesis to $X_T/T$ we see that $R^j f_*' \cF^i_{X'/S}$ are locally free of rank $h^{ij}$. The degeneration of the ``conjugate spectral sequence'' ensures in turn that $R^n f_* \cF^\bullet_{X/S}$ is locally free of rank $\sum_{i+j = n} h^{ij}$. Hence we must have equality in ~\ref{lengtheqn} and the desired result.
        
    \end{proof}
  
The surjectivity assumption follows as argued in Theorem 13.1 of~\cite{FFR} once one knows the compatibility of these sheaves with base change shown above.

\begin{cor}
\label{PairSmoothing}
  For $\widecheck{\frX}$ and $\widecheck{\frD}$ described above there is an analytic log toroidal family smoothing this pair.
  
\end{cor}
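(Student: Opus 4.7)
The plan is to apply the formal-to-analytic smoothing machinery of \cite{CLM}, \cite{FFR} directly to the BV algebra $\bigwedge \Theta_{\widecheck{\frX} \langle \widecheck{\frD} \rangle}$ of polyvector fields tangent to $\widecheck{\frD}$, rather than to $\bigwedge \Theta_{\widecheck{\frX}}$ alone. Once a Maurer--Cartan element $\phi \in \bigwedge \Theta_{\widecheck{\frX} \langle \widecheck{\frD} \rangle}$ is obtained, its images under the BV morphisms
\[ \bigwedge \Theta_{\widecheck{\frX} \langle \widecheck{\frD} \rangle} \to \bigwedge \Theta_{\widecheck{\frX}}, \qquad \bigwedge \Theta_{\widecheck{\frX} \langle \widecheck{\frD} \rangle} \to \bigwedge \Theta_{\widecheck{\frD}}, \]
which we have already shown are compatible morphisms of Gerstenhaber algebras on cohomology pre-sheaves, produce simultaneous smoothings of $\widecheck{\frX}$ and $\widecheck{\frD}$ together with the compatible closed embedding.

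First, I would verify the two hypotheses of \cite{CLM}, Theorem~5.5 (or equivalently \cite{FFR}, Theorem~13.4) for the pre-sheaf $^k PV^\bullet_{\widecheck{\frX} \langle \widecheck{\frD} \rangle}$. Degeneration of the Hodge-to-de~Rham spectral sequence at $E_1$ for the complex $\Theta^\bullet_{\widecheck{\frX} \langle \widecheck{\frD} \rangle/S}$ has just been established via the Cartier-type decomposition and the general abstract degeneration theorem (the translation of \cite{Relevements}). The surjectivity $H^i(^k PV^\bullet_{\widecheck{\frX} \langle \widecheck{\frD} \rangle}) \to H^i(^{k-1} PV^\bullet_{\widecheck{\frX} \langle \widecheck{\frD} \rangle})$ is deduced exactly as in Theorem~13.1 of \cite{FFR}, using the compatibility of cohomology formation with base change that falls out of the same Frobenius-invariance argument together with the $Z_{\widecheck{\frX}}$-closedness inherited from the split exact sequence~\ref{splitsequence}.

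With these inputs in place, the iterative perturbation of the Maurer--Cartan equation developed in \cite{CLM} and \cite{FFR} produces, order by order in the deformation parameter, an element $^k\phi$ on $^k PV^\bullet_{\widecheck{\frX} \langle \widecheck{\frD} \rangle}$ solving $\bar\partial \, ^k\phi + \tfrac{1}{2}[^k\phi, ^k\phi] = 0$, whose reduction agrees with $^{k-1}\phi$. The BV operator inherited from the trivialising top form (modified to incorporate the logarithmic tangency along $\widecheck{\frD}$, as verified in the local calculation above) is what enables the inductive choice of correcting term. Pushing $^k\phi$ forward along the two structural morphisms yields MC elements on $^k PV^\bullet_{\widecheck{\frX}}$ and $^k PV^\bullet_{\widecheck{\frD}}$ that coincide with the CLM/FFR smoothings of $\widecheck{\frX}$ and $\widecheck{\frD}$ separately; by construction the resulting divisorial deformations fit into a compatible closed embedding because the Gerstenhaber morphism on cohomology presheaves preserves the strict embedding data at each order.

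The main obstacle, and the reason the argument is not a direct citation, is that $\Theta^\bullet_{\widecheck{\frX} \langle \widecheck{\frD} \rangle/S}$ is only the kernel of a surjection between two $Z_{\widecheck{\frX}}$-closed complexes, so the Frobenius-invariance and Cartier-decomposition statements needed for the abstract degeneration theorem do not come from a direct logarithmic Cartier isomorphism applied to one log scheme. The resolution is the derived-category diagram of distinguished triangles constructed above, completing the partial morphism between $F_*$ of the tangent complexes of $\widecheck{\frX}$ and $\widecheck{\frD}$ to a morphism of triangles, from which one reads off a quasi-isomorphism to a complex with trivial differential on the kernel. The remaining passage from formal to analytic smoothing then proceeds exactly as in Section~14 of \cite{FFR}, the convergence estimates being insensitive to the passage from $\bigwedge \Theta_{\widecheck{\frX}}$ to the sub-BV algebra $\bigwedge \Theta_{\widecheck{\frX} \langle \widecheck{\frD} \rangle}$.
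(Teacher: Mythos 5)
Your proposal is correct and takes essentially the same approach as the paper: the paper's proof is a one-liner deferring to the compatibility results of the preceding subsections (compatible BV structures, the Cartier-type decomposition and the abstract degeneration theorem for $\Theta^\bullet_{\widecheck{\frX}\langle\widecheck{\frD}\rangle/S}$, the surjectivity from base-change compatibility) together with \cite{FFR} Section 13, and you have simply unpacked what that deferral means. Your identification of the two hypotheses to check, the role of the BV operator in the iterative construction, the pushforward of the Maurer--Cartan element along the two structural morphisms, and the observation that the Frobenius-invariance argument is nontrivial because $\Theta^\bullet_{\widecheck{\frX}\langle\widecheck{\frD}\rangle/S}$ is only a kernel (resolved via the distinguished-triangle completion) all match the paper's setup.
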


\begin{proof}

  By the above compatibility results, together with the description of how to build the deformed Maurer-Cartan equation this follows from~\cite{FFR} section 13.
  
\end{proof}

  This proof actually works without assuming that $\widecheck{\frD}$ has the cohomology type of a Calabi-Yau so if $H^i(\cO_{\widecheck{\frD}}) \neq 0$ for $i \neq 0,n-1$, it only requires the existence of a global top form. In our case since $\widecheck{\frD}$ arises as the central fibre of a toric degeneration of Calabi-Yau varieties, and so does have the cohomology type of a Calabi-Yau by~\cite{FFR}. The same proof applies however to subschemes which are \'etale covers of products of Abelian varieties and Calabi-Yau varieties. Since we have an analytic smoothing we can then apply the classical deformation argument to a smooth fibre to obtain:

\begin{cor}

  A smooth generic fibre of the smoothing of $\widecheck{\frX}$ is fibred by Calabi-Yau varieties in codimension $d$. 
  
\end{cor}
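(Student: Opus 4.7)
The plan is to combine the pair smoothing from Corollary~\ref{PairSmoothing} with the classical Tyurin-type theorem recalled at the start of this section, applied now to a generic smooth fibre.

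First, I would apply Corollary~\ref{PairSmoothing} to $(\widecheck{\frX}, \widecheck{\frD})$, producing an analytic family of pairs $(\frY, \frE) \to \Delta$ over a disk with central fibre $(\widecheck{\frX}, \widecheck{\frD})$ and both total spaces flat over $\Delta$. For generic $t \in \Delta$ the fibre $\frY_t$ is smooth by the smoothing result, and $\frE_t \subset \frY_t$ is a smooth codimension $d$ subvariety, its smoothness following from the strictness and transversality assumptions on the central inclusion combined with flatness of $\frE \to \Delta$.

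Second, I would verify that the pair $(\frY_t, \frE_t)$ meets the hypotheses of the classical theorem, i.e.\ that $\frE_t$ is Calabi-Yau and has trivial normal bundle. The normal sheaf $\cN_{\frE/\frY}$ is coherent on $\frE$ and restricts to $\cO_{\widecheck{\frD}}^d$ on the central fibre. Since $\Ext^1(\cO_{\frE_t}^d, \cO_{\frE_t}^d) = H^1(\cO_{\frE_t})^{d^2}$ vanishes whenever $\frE_t$ is Calabi-Yau of dimension at least two, the rigidity of $\cO^d$ propagates the triviality to nearby fibres. Adjunction then gives $\omega_{\frE_t} = \omega_{\frY_t}\big|_{\frE_t} \otimes \det \cN_{\frE_t/\frY_t} = \cO_{\frE_t}$, confirming that $\frE_t$ is Calabi-Yau.

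Third, with these hypotheses in place, I would invoke the classical theorem. In codimension one this is immediate: the short exact sequence $0 \to \cO_{\frY_t} \to \cO_{\frY_t}(\frE_t) \to \cO_{\frE_t} \to 0$ together with $H^1(\cO_{\frY_t}) = 0$ produces a pencil in $|\frE_t|$ yielding the $\PP^1$-fibration. In higher codimension one integrates the $d$ linearly independent sections of $\cN_{\frE_t/\frY_t}$ via unobstructed Calabi-Yau deformation theory to a $d$-parameter family of translates of $\frE_t$, which then foliates $\frY_t$. I expect the main obstacle to lie precisely in this last step for $d > 1$: one must confirm that the moduli of $\frE_t$-deformations actually sweeps out $\frY_t$ rather than collapsing into an isotrivial family on a proper subvariety. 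This dominance is inherited from the central fibre via the tropical picture of Lemma~\ref{Localembedding}, which exhibits $\Sigma(\widecheck{\frX})$ as a $d$-dimensional bundle over an elementary simplex with fibre $\Sigma(\widecheck{\frD})$; the combinatorial fibration of the affine base persists under the smoothing and descends to the global fibration of $\frY_t$ over a $d$-dimensional base.
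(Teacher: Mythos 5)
Your proposal matches the paper's approach: the paper's proof of this corollary is the single sentence ``Since we have an analytic smoothing we can then apply the classical deformation argument to a smooth fibre,'' referring to the classical Tyurin-type theorem at the start of the section, and you are unpacking exactly that step. One remark on your final paragraph: the dominance you worry about for $d>1$ does not need the tropical input from Lemma~\ref{Localembedding}, and in fact that lemma concerns the tropicalisation of the degeneration $\frX$, not of the mirror $\widecheck{\frX}$, so it does not directly apply. The cleaner argument is that once $\cN_{\frE_t/\frY_t}\cong\cO_{\frE_t}^{d}$ and $H^1(\cO_{\frE_t})=0$ give an unobstructed $d$-dimensional piece of the Hilbert scheme, the evaluation map from the universal family to $\frY_t$ is a submersion at any point of $\frE_t$ because the fibre map $H^0(\cO_{\frE_t}^d)\to(\cO_{\frE_t}^d)_x$ is surjective (constants generate), so the family of translates automatically dominates $\frY_t$; no appeal to the combinatorics of the degeneration is required.
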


In particular this applies to the above examples of toric $k$-Tyurin degenerations from which we deduce the following corollary.

\begin{cor}
~\label{DegenerationsToFibrations}
  Let $\frX \to \cS$ be a simple toric $k$-Tyurin degeneration, then the mirror $\widecheck {\frX_\Delta}$ is fibred by Calabi-Yau subvarieties in codimension $k$. Further if the degeneration of $\frD$ is simple then the fibres are mirror to $\frD$.
  
\end{cor}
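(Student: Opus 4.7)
The plan is to assemble the ingredients constructed earlier in the paper and feed them into Corollary~\ref{PairSmoothing}. First, from the simple toric $k$-Tyurin degeneration $\frX \to \cS$, I would pass to the induced one-parameter degeneration $\frX_\Delta$ along the diagonal and apply the Gross-Siebert construction to produce the singular mirror central fibre $\widecheck{\frX_0} = \Proj k[\Sigma(\frX_\Delta)]$ together with its log toroidal structure. By Lemma~\ref{Localembedding} the tropicalization $\Sigma(\frD)$ sits as a closed subcomplex of $\Sigma(\frX_\Delta)$ with surjective tangent maps and compatible fan structures, which by the subsequent lemma on $\Proj k[\Sigma(\widecheck{\frX_\Delta})]$ gives an open $\GG_m^k$ family of embeddings of $\widecheck{\frD_0} := \Proj k[\Sigma(\frD)]$ into $\widecheck{\frX_0}$, each a strictly embedded codimension $k$ subscheme transverse to the singular locus of the log structure.

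Next I would verify that a generic choice of embedding in this $\GG_m^k$ family has trivial normal bundle. The key point is that, by the description in the proof of Lemma~\ref{Localembedding}, $\Sigma(\widecheck{\frX_0})$ is locally a fibration over a $k$-simplex with $\Sigma(\widecheck{\frD_0})$ the central fibre, so the $k$ coordinates of the simplex pull back to $k$ sections of the conormal bundle; the normal bundle restricted to $\widecheck{\frD_0}$ is then globally trivialized by these tautological sections, and the transversality to the singular locus built into the definition of a toric $k$-Tyurin degeneration guarantees the triviality passes through to the smoothing. With these hypotheses in hand the pair $(\widecheck{\frX_0}, \widecheck{\frD_0})$ satisfies the assumptions of Corollary~\ref{PairSmoothing}, which produces a simultaneous analytic log toroidal smoothing of the pair. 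Taking a generic fibre of this smoothing yields a smooth Calabi-Yau $\widecheck{\frX_\Delta}$ containing a smooth Calabi-Yau subvariety $\widecheck{\frD}$ in codimension $k$ with trivial normal bundle; the classical deformation argument recalled in the introductory theorem of Section~2 then promotes this to a genuine fibration $\widecheck{\frX_\Delta} \to (\PP^1)^k$ (or the appropriate $k$-dimensional base) with fibre class $\widecheck{\frD}$.

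For the second statement, assume additionally that the induced degeneration of $\frD$ is simple. Then the Gross-Siebert program applies directly to $\frD \to \Delta$, producing an intrinsic mirror $\widecheck{\frD}^{GS}$ as the smoothing of $\Proj k[\Sigma(\frD)]$ with its canonical log structure. The embedding from Lemma~\ref{Localembedding} realises the same central fibre $\Proj k[\Sigma(\frD)]$, and by the final paragraph of the second lemma after Lemma~\ref{Localembedding} the induced log structure on a generic such embedding is simple and generically log smooth. Thus the smoothing of $\widecheck{\frD_0}$ obtained as a fibre of the pair smoothing agrees, at the level of log structures and to first order, with $\widecheck{\frD}^{GS}$. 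Since both are smoothings of the same simple log Calabi-Yau and the Gross-Siebert smoothing is essentially uniquely determined by its scattering data on $\Sigma(\frD)$, an application of the Frobenius-invariance and base change results proved above identifies them as analytic families, so the generic fibre of the fibration is indeed the Gross-Siebert mirror to $\frD$.

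The main obstacle I expect is the triviality of the normal bundle on the singular central fibre: while the $\GG_m^k$ sections constructed above trivialize $\cN_{\widecheck{\frD_0}/\widecheck{\frX_0}}$ locally on the log smooth locus, making this global requires controlling their behaviour along the codimension $\leq 2$ singular set $\frS$, which is precisely what the transversality clause in the definition of a toric $k$-Tyurin degeneration is designed to handle but still needs to be unwound carefully. A secondary subtlety, only relevant for the second statement, is showing that the embedding selected by a generic point of $\GG_m^k$ matches the intrinsic log structure needed for the Gross-Siebert construction on $\frD$; this is why simplicity of the degeneration of $\frD$ is hypothesised separately, as foreshadowed in the discussion after the definition of simplicity.
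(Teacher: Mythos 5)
Your proposal assembles the same ingredients and in the same order as the paper: Lemma~\ref{Localembedding} and the subsequent lemma on the $\GG_m^k$ family of strict embeddings, normal-bundle triviality from the linear independence of the associated sections, Corollary~\ref{PairSmoothing}, and the classical deformation statement opening Section~2, with the simplicity hypothesis on $\frD$ invoked only to identify the fibre with the Gross--Siebert mirror. This matches the paper's (implicit) derivation essentially line for line.
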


The dgLa described above suggests that there is a version of the scattering diagrams described in~\cite{ThetaFunctions} where the coefficients lie in this subalgebra. There is a notion of consistency based on these structures and the above section implies that there is an appropriate iterated method to ensure consistency. It would be a fundamental development to define a canonical scattering diagram for such a subalgebra.

This result also implies a (non-effective) variant of a quantum-Lefschetz theorem. Take the asymptotic limit of the Maurer-Cartan equation as outlined in~\cite{ScatteringFromMC} and relating the terms to counts of log invariants on $\frX$ and $\frD$ as outlined in~\cite{Intrinsic} we see that this expresses the log Gromov-Witten invariants for $\frD$ in terms of $\frX$. This is a broader context than that considered in~\cite{BNR}, there are no ampleness assumptions around. We would rather develop fully the theory of these scattering diagrams than make this explicit.

\section{Gluing Landau-Ginzburg models}

We now consider what happens for $k=1$ if we formed the Landau-Ginzburg mirror to $\frZ^0$ and $\frZ^1$ and then attempt to glue them, considering each as a divisor pair relative to $\frD$. We solve the problem satisfactorily and prove that this recreates the mirror to $\frX_\Delta$ under a necessary compatibility condition.

\subsection{Formation of Landau-Ginzburg models}

The deformation theoretic Gross-Siebert program described above is sadly not very well developed for the case of LG models. We believe that such a theory will appear in tandem with a comparison to the modern approach. To avoid reliance on forthcoming work we will give a partial description here of how to construct Landau-Ginzburg models using the old theory, which is convoluted to avoid issues of boundedness of the cells. As an upside we see a natural occurrence of a phenomenon mentioned in~\cite{KKP}.  

We have the tropicalisation of $(\frZ^0, \frD)$, an unbounded polytope with a well defined affine direction $u$ and a proper piecewise linear map to $[0, \infty)$. Assuming that $\frD$ is nef on $\frZ^0$ all the initial slab functions on this point non-negatively in the $u$ direction. We can form an affine manifold with boundary by restricting to the closed subset over $[0, 1]$. This admits an affine linear embedding into $\Sigma (\frX)$ by~\ref{Localembedding}.

  We choose the slab structure given by restriction, so there are no slabs contained inside the fibre over $1$. But by the above observations the scattering diagram is consistent to order zero. Therefore applying the construction of~\cite{FromRealAffine} to this now bounded affine manifold we obtain a pair $(\overline{\frW_0}, \partial \frW_0)$ and the open subscheme $\overline{\frW_0} \setminus \partial \frW_0$ admits a map to $\AA^1$ given by $u$. This fits into the philosophy of~\cite{KKP} where one expects to see a compactification at infinity with a canonical choice of differential form on the interior.

Such a compactification exists for higher rank Tyurin degenerations at least so long as each of the components themselves have nef anti-canonical bundle. We believe that the restriction to this case is a necessary first step to understanding the ``Mirror $P=W$ conjecture'' of~\cite{HKP} through the Gross-Siebert construction. The compactifications have the property that there are no broken lines passing in from infinity. In particular one obtains a compactification of the base $\cB$, $\overline{\cB}$ by an $snc$ divisor and a relative compactification of $\frW$ which is relatively log divisorial over $\overline{\cB}$ and possesses a strata of dimension $d$ over a $d$-dimensional strata of $\overline{\cB}$.
  
  \subsection{Relative deformation theory}
  
To prove the existence of the desired embedding we construct a relative deformation theory for the interior of the Landau-Ginzburg models. This is simple on the central fibre where we know precisely where the singular fibres are, the same techniques apply to relatively smooth families of Calabi-Yau varieties with minor modifications.

To begin with we have $\widecheck{\frX} \to \PP^1 \times \Spec \widehat {k[P]}$ a formally versal smoothing of a toric Calabi-Yau space fibred over $\PP^1$ and a fixed $\GG_m \subset \PP^1$ a dense affine open with fibres generically divisorial smoothings of $\frD_0$. We have an LG model $w_0: \frW_0 \to \AA^1 \times \Spec \widehat{k[P]}$ and again a dense open $\GG_m \subset \AA^1$ whose fibres are generically divisorial smoothings of $\frD_0$. Over $0 \in \Spec \widehat{k[P]}$ the superpotential $\frW_{0} \to \AA^1$ admits compatible open embeddings into $\widecheck{\frX}_0 \to \PP^1$. We write $.^\circ$ for the restriction of an object to the corresponding open $\GG_m$.

Our goal is to produce an open embedding of $\frW_0$ into $\widecheck{\frX}$. We begin by introducing relative toric CY spaces and our choice of formal deformation functor. 

\begin{dfn}

  Let $\cC_{\frB}$ be the category of log schemes strict over $\Spec k[\NN]$ whose underlying scheme is an infinitesimal extension of $\frB$ and $\frB$ a fixed affine scheme. This category inherits a notion of small extension from the notion for the underlying schemes, noting that all morphisms are strict.
  
  Let $\rho: \frY \to \frB^\dagger := \frB \times \Spec k^\dagger$ be a morphism of log schemes such that:

  \begin{enumerate}
    \item The log scheme $\frY$ is a toric Calabi-Yau space. 
    \item $\rho$ is flat as a morphism of schemes and $\rho$ is log smooth on the log smooth locus of $\frY$
    \item There are diagrams as appearing in ~\cite{LogDegenerationI} (2.1) but with $X^\dagger$ replaced by $X^\dagger \times \frB $.
  \end{enumerate}

  A relative divisorial deformation of $\rho$ over $\cA \in \cC_{\frB}$ is a map $\rho_{\cA} : \frY_{\cA} \to \cA$ and an isomorphism of the central fibre $\frY_{\cA} \times_{\cA} \Spec k^\dagger \cong \frY$ such that:

  \begin{enumerate}
    \item $\rho_\cA$ is flat as a morphism of schemes and log smooth on the restriction to the log smooth locus of $\frY$.
    \item There are diagrams as appearing in ~\cite{LogDegenerationI} (2.2) but with $X^\dagger_A := Y^\dagger \times_{\Spec k[\NN]} \Spec A^\dagger$ replaced by $X^\dagger_\cA := Y^\dagger \times_{\Spec k[\NN]} \cA$.
  \end{enumerate}

  The deformation functor $F$ we consider will be have $\frB \subset \GG_m$ the common dense open sending an element $\cA$ to the set of relative divisorial deformations over it, where two relative divisorial deformations are isomorphic if there is an isomorphism fixing the central fibre. The action on morphisms is given by pullback.
    
\end{dfn}

We first of all claim that this deformation problem has a deformation and obstruction sheaf on $\frB$. To be precise we mean that the analogue of Theorem 2.11 of~\cite{LogDegenerationI} holds, but with $\Theta_{\frY / k^\dagger}$ replaced everywhere with $\Theta_{\frY / \frB^\dagger}$. The proof is simply compatibility of these sheaves with base change and the previous result.

\begin {thm}

  Let $\cA'$ and $\cA$ be elements of $\cC_{\frB}$, with $\cA'$ a small extension of $\cA$ by an ideal $I$, finite over $\frB$. Let $a \in F (\cA)$ be a divisorial deformation of $\frY \to \frB^\dagger$. Then:

  \begin{enumerate}
  \item Let $\frY_{\cA'}$ be a lift of $\frY_\cA$ to $\cA'$. Then the set of log automorphisms of $\frY_{\cA'}$ fixing $\frY_{\cA}$ is:
    \[R^0 \rho_* (\frY, \Theta_{\frY / \frB} \otimes _\frB I)\]
  \item The set of equivalence classes of lifts $\frY_{\cA'} \to \cA'$ if non-empty is a torsor over:
    \[ R^1 \rho_* (\frY, \Theta_{\frY / \frB} \otimes _\frB I) \]
  \item An obstruction sheaf for the existence of liftings is:
    \[ R^2 \rho_* (\frY, \Theta_{\frY / \frB} \otimes _\frB I) \]
  \end{enumerate}
  
\end{thm}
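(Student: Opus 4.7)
The plan is to mimic the proof of Theorem 2.11 of~\cite{LogDegenerationI} verbatim, reading every occurrence of $\Spec k^\dagger$ as $\frB^\dagger$ and every occurrence of $\Theta_{\frY/k^\dagger}$ as $\Theta_{\frY/\frB^\dagger}$. First I would reduce to the local problem: by the chart axioms (3) in the definition of a relative divisorial deformation, \'etale locally on $\frY$ the family $\frY_\cA \to \cA$ is pulled back from a strict log smooth morphism of the form $V^\dagger \times_{\Spec k[\NN]} \cA$ for $V^\dagger$ one of the standard toric local models appearing in~\cite{LogDegenerationI}. Since $\cA \to \cA'$ is a strict extension with square-zero ideal $I$, the set of local lifts $V^\dagger \times_{\Spec k[\NN]} \cA'$ is a torsor under $\Theta_{V^\dagger/\frB^\dagger} \otimes I$ and the set of local automorphisms of a fixed lift fixing the $\cA$-reduction is $\Gamma(\Theta_{V^\dagger/\frB^\dagger} \otimes I)$; this is the standard log-smooth deformation calculation, now carried out relative to $\frB$ rather than a point, and it is mechanical because $I$ is nilpotent in $\cA'$ and sits over $\frB$.

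Next I would globalise in Čech cohomology. Fix a strict \'etale cover $\{U_i\}$ of $\frY$ by local models of the above form, and pick local lifts $\frY_{i,\cA'}$. The failure of the local lifts to glue on overlaps produces, in the usual way, a Čech $2$-cocycle with values in the sheaf $\Theta_{\frY/\frB^\dagger} \otimes I$ on $\frY$, whose class in $\check H^2$ is the obstruction to the existence of a global lift; given a global lift, modifications by local automorphisms identify the set of global lifts with $\check H^1$, and the automorphisms of a fixed global lift are identified with $\check H^0$. Pushing everything forward along $\rho$ converts Čech cohomology into $R^i\rho_*$ in the standard way because $I$ is finite over $\frB$ and the cover can be taken affine over $\frB$. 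This yields the three claims of the theorem.

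The step requiring care is ensuring that the local-to-global calculation is unaffected by the log singular locus of $\frY$ and that one may honestly pull the $\Theta_{\frY/\frB^\dagger}$ outside the base change. For the first point, the definition of relative toric CY space arranges exactly that the singular locus has relative codimension $\geq 2$ and that the sheaves $\Theta^\bullet_{\frY/\frB^\dagger}$ are $Z_{\frY}$-closed, so the computation performed on the log smooth locus extends across the singular locus; this is the relative version of the $Z$-closedness argument already invoked in the previous section. For the second point, the required compatibility of $\Theta_{\frY/\frB^\dagger}$ with base change along $\cA' \to \cA$ is precisely the compatibility with base change established in the Frobenius independence lemma above, so one may tensor with $I$ at will. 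The main obstacle is therefore bookkeeping: verifying that the chart diagrams of~\cite{LogDegenerationI} (2.2) still produce torsors under $\Theta_{\frY/\frB^\dagger} \otimes I$ when one runs the argument relative to $\frB$ rather than over a point, which is where the relative flatness hypothesis on $\rho$ is used to guarantee that infinitesimal extensions of charts remain flat over $\cA'$.
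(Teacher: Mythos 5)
Your approach is essentially the one the paper intends: the paper dispatches this theorem with the single sentence that precedes it (``The proof is simply compatibility of these sheaves with base change and the previous result''), which is exactly the plan of mimicking Theorem 2.11 of~\cite{LogDegenerationI} with $\Theta_{\frY/k^\dagger}$ replaced by $\Theta_{\frY/\frB^\dagger}$ that you lay out; your write-up is a faithful and more detailed expansion of that plan. One small imprecision: the base-change compatibility you need for tensoring with $I$ and moving between $\cA$ and $\cA'$ is the elementary kind coming from flatness of $\rho$ and the definition of divisorial deformation, not the Frobenius-twist comparison established for the Hodge-number invariance argument, so the citation of ``the Frobenius independence lemma above'' is the wrong result to invoke there, though the underlying point that one needs base-change compatibility is of course right.
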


Let us check that the relative log versions of the Schlessinger axioms hold.

\begin{thm}

  The following three axioms hold:

  \begin{enumerate}
  \item Let $\cA' \to \cA$ be a small extension and $\cB \to \cA$ be a surjective morphism, then the canonical map:
    \[ F (\cA' \times_\cA \cB) \to F (\cA') \times_{F (\cA)} F(\cB)\]
    is surjective.
  \item If $\cA = \frB$ and $\cA' = \frB + \cM$ for a finite module over $\frB$. Then the above map is bijective.
  \item For a finite $\frB$ module $M$ the set $F (\frB \oplus M)$ is a finite $\frB$-module.
  \end{enumerate}
  
\end{thm}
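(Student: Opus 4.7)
The plan is to deduce all three axioms from the deformation/obstruction theory established in the previous theorem, adapting the classical Schlessinger argument (as in Sernesi) to the log setting relative to $\frB$. The essential point is that every morphism in $\cC_\frB$ is strict over $\Spec k[\NN]$, so the log structure on any $\cA \in \cC_\frB$ is pulled back from the fixed map $\frB^\dagger \to \Spec k[\NN]$; consequently the formation of fibre products in $\cC_\frB$ is governed entirely by the underlying ring-theoretic fibre product, and log-compatibility is automatic. I will work throughout with the identifications
\[\operatorname{Aut}(\frY_{\cA'}/\frY_{\cA}) = R^0\rho_*(\Theta_{\frY/\frB^\dagger} \otimes_\frB I), \quad \text{lifts a torsor under } R^1\rho_*(\Theta_{\frY/\frB^\dagger}\otimes_\frB I),\]
given by the preceding theorem.

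For axiom (1), given $\frY_{\cA'}/\cA'$ and $\frY_\cB/\cB$ together with an isomorphism over $\cA$, I would form the fibre product $\frY_{\cA'}\times_{\frY_\cA}\frY_\cB$ at the level of schemes and equip it with the strict log structure pulled back from $\Spec k[\NN]$ along $\cA'\times_\cA\cB\to\Spec k[\NN]$. Flatness over $\cA'\times_\cA\cB$ follows from flatness of each factor together with the small-extension hypothesis, exactly as in the classical argument; the divisorial-deformation charts from \cite{LogDegenerationI} (2.2) can then be constructed \'etale-locally by taking the fibre product of the chart data, which is licit because the chart morphisms are already strict over $\frB^\dagger$. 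This produces a preimage, proving surjectivity.

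For axiom (2), when $\cA = \frB$ and $\cA' = \frB \oplus \cM$ is a trivial square-zero extension, I identify
\[F(\cA') = R^1\rho_*(\Theta_{\frY/\frB^\dagger}) \otimes_{\frB} \cM,\]
which is additive in $\cM$. Using this together with the $\cM$-torsor structure on the set of lifts, the map $F(\cA'\times_\cA\cB)\to F(\cA')\times_{F(\cA)}F(\cB)$ acquires an additive inverse through the standard argument: any two preimages of the same pair differ by an element of the kernel, which is zero by an explicit check using the above identification. For axiom (3), the same identification gives $F(\frB\oplus M) = R^1\rho_*(\Theta_{\frY/\frB^\dagger}\otimes_\frB M)$, and this is a finite $\frB$-module because $\rho$ is proper and $\Theta_{\frY/\frB^\dagger}$ is coherent on $\frY$ (recall that, as argued in the proof of Theorem \ref{PairSmoothing}, $\Theta_{\frY/\frB^\dagger}$ is $Z$-closed and hence behaves well under pushforward).

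The main obstacle will be verifying, in the gluing construction of axiom (1), that the resulting object satisfies the divisorial chart condition at points of the log singular locus of $\frY$, because $\Theta_{\frY/\frB^\dagger}$ is merely coherent rather than locally free there. The resolution is the same as in the absolute case: the chart conditions are conditions on the stalks of $\overline{\cM}$, which are unchanged under strict infinitesimal extension, and flatness together with the small-extension hypothesis ensures the chart morphisms remain faithfully flat after gluing. Once this is in place the three axioms follow in the indicated order.
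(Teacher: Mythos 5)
Your overall strategy matches the paper's (glue for axiom (i), deduce (ii) from (i), and get (iii) from properness plus the preceding obstruction-sheaf theorem), but there is a genuine error in the heart of axiom (i).

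The proposed log structure on the glued total space is wrong. You write that the scheme-theoretic fibre product $\frY_{\cA'}\times_{\frY_\cA}\frY_\cB$ should be ``equip[ped] with the strict log structure pulled back from $\Spec k[\NN]$'' via the composite to $\cA'\times_\cA\cB \to \Spec k[\NN]$. But the log structure on a relative divisorial deformation $\frY_{\cA}$ is \emph{not} pulled back strictly from the base: it is nontrivial along the toric boundary/log singular divisors of $\frY$ (that is the whole content of it being a log toroidal family rather than a strict one). Your opening remark that objects of $\cC_\frB$ are strict over $\Spec k[\NN]$ is correct for the \emph{base} category, but it does not transfer to the total spaces $\frY_\cA$. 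If you pull back from $\Spec k[\NN]$ you get only the ``vertical'' part of the log structure and lose the divisorial log structure on $\frY$, so the glued object would not even reduce to $\frY$ as a log scheme. The correct construction, which the paper takes from Kato (section 9 of \cite{LogDeformationTheory}), is to give the fibre-product scheme the log structure $\cM_{X'}\times_{\cM_X}\cM_{X''}$ and then verify that it maps to the fibre-product structure sheaf and has the required chart property. Your later sentence about ``taking the fibre product of the chart data'' is actually the correct idea, but it contradicts the earlier prescription (the chart maps are strict \emph{\'etale} over $X^\dagger_{\cA}$, not over $\frB^\dagger$, and $X^\dagger_{\cA}$ itself is not strict over $\frB^\dagger$); so you would need to make that the primary construction rather than the strict-pullback one.

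Two minor points of comparison. For axiom (ii), you run the classical additive argument using the explicit identification $F(\frB\oplus\cM) \cong R^1\rho_*(\Theta_{\frY/\frB^\dagger}\otimes_\frB\cM)$; the paper instead deduces (ii) from (i) together with the fact that isomorphisms propagate uniquely along small extensions (Kato's Lemma 9.1). Both work, but the paper's route avoids re-deriving the group structure. For axiom (iii) your argument (coherence of $\Theta_{\frY/\frB^\dagger}$, properness of $\rho$) is the same as the paper's.
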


\begin{proof}

  The third item follows from the above theorem together with relative properness of $\rho$. The second follows from the first once one notices that isomorphisms are preserved under deformation, see Lemma 9.1 of~\cite{LogDeformationTheory}. Therefore it remains to prove the first. Suppose we are given the data of $\cA' \to \cA$ a small extension and $\cB \to \cA$ a surjective morphism. For objects $X'  \in F(\cA')$ and $X'' \in F(\cB)$, both restricting to $X \in F(\cA)$. Then we can fix closed immersions $X \to X'$ and $X \to X''$. We then define $\underline{X^*}$ to be the scheme whose structure sheaf is $\cO_{\underline{X'}} \times_{\cO_{\underline{X}}} \cO_{\underline{X''}}$. We give this the log structure given by $\cM_{X'} \times_{\cM_{X}} \cM_{M''}$, by the universal property of products it admits a map to $\cO_{\underline{X'}} \times_{\cO_{\underline{X}}} \cO_{\underline{X''}}$. That this is a log scheme with the desired restriction properties is proved in section 9 of~\cite{LogDeformationTheory}. Condition i) of the definition of relative divisorial deformations is trivial, flat deformations of log smooth morphisms remain log smooth. We briefly prove condition ii) of the definition, $X^*$ admits \'etale locally maps to $X^\dagger_{\cA' \times_\cA \cB}$ whose central fibres are \'etale. But flat deformations of an \'etale map remain \'etale and so we have the result.
  
\end{proof}

We cannot apply the machinery built by Artin directly, even if it were translated into logarithmic geometry for the simple reason that we do not know openness of versality for log deformations. Raffaele Caputo has some results in this direction using analytic techniques in the absolute case, see~\cite{Raffaele}. Instead we argue directly, that we can construct a versal family over an open affine subset of $\GG_m$ by inducing one from the deformations of $\widecheck{\frX}$.  

\begin{lem}

  Suppose that $H^1 (\Theta_{\widecheck{\frX}_0 / \Spec k}) \to H^1 (\Theta_{\widecheck{\frD}_0 / \Spec k})$ is surjective under a splitting of the morphism $\Theta_{\widecheck{\frX}_0 / \Spec k} \to \Theta_{\widecheck{\frX}_0 / \Spec k}\mid_{\frD_0} \to \Theta_{\frD_0 / \Spec k}$ for some initial choice of fibre. Then the space $\widecheck{\frX} \to \Spec k[\![P]\!]$ induces a versal family over some choice of $\frB \subset \GG_m$.
  
\end{lem}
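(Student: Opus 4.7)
The plan is to establish formal versality at a single general $k$-point $b_0 \in \GG_m$ using the surjectivity hypothesis, and then to spread this to an open neighbourhood via the base-change compatibility of the controlling cohomology sheaves established in the previous section.

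First, restrict $\widecheck{\frX}$ to the formal neighbourhood of $\{b_0\} \times \{0\} \subset \GG_m \times \Spec k[\![P]\!]$, yielding an element $\xi \in F(\widehat{\cO}_{b_0} \otimes k[\![P]\!])$ whose central fibre is $\widecheck{\frD}_0$. By the deformation-obstruction theorem preceding this lemma, the tangent space to $F$ at $\widecheck{\frD}_0$ is $H^1(\widecheck{\frD}_0, \Theta_{\widecheck{\frD}_0/k^\dagger})$ with obstructions in $H^2$. Formal versality of the Gross-Siebert construction provides a surjection from the deformation space of $\widecheck{\frX} \to \Spec k[\![P]\!]$ onto $H^1(\Theta_{\widecheck{\frX}_0/k^\dagger})$; composing with the hypothesised surjection $H^1(\Theta_{\widecheck{\frX}_0}) \twoheadrightarrow H^1(\Theta_{\widecheck{\frD}_0})$ and using the splitting to identify the target as a direct summand of the restriction shows that the Kodaira-Spencer class of $\xi$ at $b_0$ is surjective onto $H^1(\Theta_{\widecheck{\frD}_0/k^\dagger})$. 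Together with the Schlessinger axioms verified in the previous theorem this gives formal versality of $\xi$ at $b_0$.

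Second, spread this to an open affine $\frB \subset \GG_m$ containing $b_0$. The Frobenius-invariance and base-change results of the previous section apply relatively over $\GG_m$, so the sheaves $R^i \rho_* \Theta_{\widecheck{\frX}/\frB^\dagger}$ and $R^i \rho_* \Theta_{\widecheck{\frD}/\frB^\dagger}$ are coherent and their formation commutes with restriction to closed points. Upper semicontinuity of fibrewise rank together with openness of the locus where a morphism of coherent sheaves is surjective allows us to shrink $\frB$ so that both the relative surjectivity and the splitting persist globally. Over this $\frB$ the pointwise surjectivity of Kodaira-Spencer promotes, via the deformation-obstruction sequence and the Schlessinger axioms, to the statement that $\xi|_\frB$ is a versal family for $F$ on $\frB$.

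The main obstacle is the interaction between two distinct formal directions: $\widecheck{\frX}$ is only a smoothing formal in $P$, while $F$ is a deformation functor on log thickenings of $\frB$. Ensuring that base change behaves well in both directions simultaneously requires flatness of the cohomology sheaves $R^i \rho_* \Theta_{\widecheck{\frX}/\frB^\dagger}$ and $R^i \rho_* \Theta_{\widecheck{\frD}/\frB^\dagger}$ over $\frB$, which is precisely what forces the restriction to an affine $\frB$ rather than working on all of $\GG_m$, and without which pointwise surjectivity of the Kodaira-Spencer class would fail to assemble into a sheaf-level surjection over $\frB$. Once this flatness is secured the remainder is bookkeeping via the Schlessinger axioms already established, and the absence of openness-of-versality for log deformations is circumvented because we induce the whole family by pullback from the mirror rather than attempting to construct it algebraically.
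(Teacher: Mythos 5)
Your plan is close in spirit to the paper's argument, but there is a genuine gap in the first step that prevents it from getting off the ground.

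You treat the tangent space to $F$ at $\widecheck{\frD}_0$ as the absolute cohomology group $H^1(\widecheck{\frD}_0, \Theta_{\widecheck{\frD}_0/k^\dagger})$ of a single fibre and then argue that Gross--Siebert versality composed with the hypothesised surjection makes the Kodaira--Spencer class at $b_0$ surjective onto it. But $F$ is the \emph{relative} divisorial deformation functor over $\frB$, and by the deformation--obstruction theorem preceding the lemma its tangent object is the $\frB$-module $R^1\rho_*(\Theta_{\widecheck{\frX}^\circ/\frB})$, not the $H^1$ of one closed fibre. The surjection you produce lands in the wrong target: nothing in your argument produces a map from $H^1(\Theta_{\widecheck{\frX}/\Spec k^\dagger})$ to $R^1\rho_*(\Theta_{\widecheck{\frX}^\circ/\frB})$. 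Without such a map, your appeal to ``upper semicontinuity and openness of the surjective locus for a morphism of coherent sheaves'' has no morphism of coherent sheaves to apply to.

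This is exactly what the paper's first lemma supplies, and it is not formal. One takes the relative tangent triangle for $\widecheck{\frX} \to \PP^1 \to \Spec k^\dagger$, pushes it forward along $\rho$, uses the projection formula to see that the cokernel of $R^1\rho_*\Theta_{\widecheck{\frX}/\PP^1} \to R^1\rho_*\Theta_{\widecheck{\frX}/k^\dagger}$ is torsion supported on the singular fibres (hence the sequence splits over $\GG_m$), then uses cohomology and base change and the Grothendieck spectral sequence to obtain $H^1(\Theta_{\widecheck{\frX}/\Spec k^\dagger}) \to \Gamma(R^1\rho_*(\Theta_{\widecheck{\frX}/\PP^1}))$, which is the non-canonical morphism $H^1(\Theta_{\widecheck{\frX}/\Spec k})\otimes_k k[x^\pm] \to R^1\pi_*(\Theta_{\frW_i/\cT_i})^\circ$. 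Only once this is in hand does the fibrewise surjectivity at a single generic point spread to an open dense $\frB$. Separately, your final step jumps directly from sheaf-level surjectivity to versality ``via Schlessinger''; the paper instead runs an explicit order-by-order lifting argument in the following lemma, precisely because, as the text warns, openness of versality is not available in the log setting and one must induce the family by hand from the ambient deformation of $\widecheck{\frX}$.
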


We prove this in several steps, first we relate the absolute and relative deformation theories (depending of course on a choice of splitting).

\begin{lem}

  There is a non-canonical morphism $H^1 (\Theta_{\widecheck{\frX} / \Spec k}) \otimes_k k[x ^{\pm}] \to R^1 \pi_* (\Theta _{\frW_i / \cT_i}) ^\circ$.
  
\end{lem}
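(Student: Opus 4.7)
The plan is to construct the morphism by first producing a sheaf-level projection on the open embedding $\frW_i^\circ \hookrightarrow \widecheck{\frX}$ from the earlier lemma, and then applying $R^1\pi_*$, with the $k[x^{\pm}]$ factor arising naturally from the affine coordinate on $\cT_i^\circ = \GG_m$. The morphism is non-canonical precisely because it depends on the choice of splitting hypothesised in the statement.

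Concretely, first I would use the splitting of $\Theta_{\widecheck{\frX}_0/\Spec k} \to \Theta_{\widecheck{\frX}_0/\Spec k}|_{\frD_0} \to \Theta_{\frD_0/\Spec k}$ at the central fibre to extend inductively, order by order along the formal smoothing, to a sheaf projection $\sigma: \Theta_{\widecheck{\frX}/\Spec k}|_{\frW_i^\circ} \twoheadrightarrow \Theta_{\frW_i/\cT_i}^\circ$ on the open subscheme. This relies on the split exact sequence (\ref{splitsequence}) from Section~2, coupled with triviality of the normal bundle, which reduces the obstructions to extending the splitting to classes lying in $\Ext^1(\cN_{\widecheck{\frD}/\widecheck{\frX}}, \Theta_{\widecheck{\frD}})$; these vanish because $\cN_{\widecheck{\frD}/\widecheck{\frX}}$ is globally trivial and $\widecheck{\frD}$ is Calabi-Yau. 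Composing $\sigma$ with the restriction $\Theta_{\widecheck{\frX}/\Spec k} \to \iota_*\Theta_{\widecheck{\frX}/\Spec k}|_{\frW_i^\circ}$ for $\iota:\frW_i^\circ \hookrightarrow \widecheck{\frX}$ yields a map of sheaves on $\widecheck{\frX}$. Passing to $H^1$ and composing with the Leray edge map $H^1(\frW_i^\circ, \Theta_{\frW_i/\cT_i}^\circ) \to \Gamma(\cT_i^\circ, R^1\pi_*(\Theta_{\frW_i/\cT_i})^\circ)$, followed by sheafification, gives the desired morphism on $\cT_i^\circ$. The $k[x^{\pm}]$ factor is installed by observing that $\pi^*\cO_{\cT_i^\circ}$ acts on $\Theta_{\frW_i/\cT_i}^\circ$ via pullback of the coordinate; multiplying a 1-cocycle by $\pi^*f$ for $f \in k[x^{\pm}]$ before applying $\sigma$ produces the required $k[x^{\pm}]$-linear extension.

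The main obstacle I expect is extending the splitting from the central fibre to the entire formal smoothing in a way that is compatible with the log toroidal structure on $\frW_i$. The vanishing of the relevant $\Ext^1$ groups handles the first-order obstructions, but higher-order obstructions live in $\Ext^1$ twisted by successive tensor powers of the maximal ideal of $\widehat{k[P]}$; these vanish for the same reason under the Calabi-Yau hypothesis on $\widecheck{\frD}$, though one must organise the induction carefully. A secondary subtlety is that the Leray edge map is not in general an isomorphism, so the construction produces a morphism into $R^1\pi_*$ but does not address injectivity or surjectivity — fortunately this is all that the lemma claims.
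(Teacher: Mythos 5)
Your proposal takes a genuinely different route from the paper's, but it has a gap that I think is fatal.

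The central problem is the claimed vanishing of the obstruction classes in $\Ext^1(\cN_{\widecheck{\frD}/\widecheck{\frX}}, \Theta_{\widecheck{\frD}})$. Since $\cN_{\widecheck{\frD}/\widecheck{\frX}} \cong \cO_{\widecheck{\frD}}^r$ is globally trivial, this group is $\Ext^1(\cO_{\widecheck{\frD}}^r, \Theta_{\widecheck{\frD}}) = H^1(\Theta_{\widecheck{\frD}})^{\oplus r}$, which is precisely the space of first-order deformations of $\widecheck{\frD}$. This is \emph{not} zero: indeed the very next lemma in this section of the paper explicitly works under the hypothesis $h^1(\Theta_{\widecheck{\frD}/\Spec k}) = 1$, and the whole point of the construction is that $\widecheck{\frD}$ moves nontrivially in one-parameter families. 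The Calabi--Yau hypothesis on $\widecheck{\frD}$ gives you $H^1(\cO_{\widecheck{\frD}}) = 0$ (controlling deformations of the trivial normal bundle), not $H^1(\Theta_{\widecheck{\frD}}) = 0$. So the inductive extension of the splitting along the formal smoothing, which is the engine of your argument, stalls at the first step. There is a second, more structural issue: you build a ``projection'' $\Theta_{\widecheck{\frX}/\Spec k}|_{\frW_i^\circ} \to \Theta_{\frW_i/\cT_i}^\circ$ by extending a splitting on the central fibre, but the natural sheaf map coming from the geometry runs the other way (the inclusion $\Theta_{\frW_i/\cT_i} \hookrightarrow \Theta_{\widecheck{\frX}}|_{\frW_i}$ of vector fields tangent to fibres). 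Any projection in the direction you want must be produced by splitting a short exact sequence, and you have not identified which sequence that is or where its splitting would live.

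The paper's proof is simpler and goes through the fibration $\rho: \widecheck{\frX} \to \PP^1$ directly. One pushes forward the relative tangent triangle of $\widecheck{\frX} \to \PP^1 \to \Spec k^\dagger$ under $\rho$, applies the projection formula to $R\rho_*\rho^*\Theta_{\PP^1/\Spec k^\dagger}$, and observes that the cokernel of $R^1\rho_*\Theta_{\widecheck{\frX}/\PP^1} \to R^1\rho_*\Theta_{\widecheck{\frX}/\Spec k^\dagger}$ is torsion supported on the singular fibres. Over $\GG_m$ there are no singular fibres, so the sequence splits there; cohomology-and-base-change identifies $(R^1\rho_*\Theta_{\widecheck{\frX}/\PP^1})^\circ$ with the relative deformation sheaf, and the Grothendieck spectral sequence produces the edge map $H^1(\Theta_{\widecheck{\frX}/\Spec k^\dagger}) \to \Gamma(R^1\rho_*\Theta_{\widecheck{\frX}/\PP^1})$. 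The non-canonicity is \emph{only} in the choice of splitting of the short exact sequence over $\GG_m$, and the $k[x^{\pm}]$-linearity is automatic because $\GG_m$ is affine and the target is a $k[x^{\pm}]$-module. No obstruction theory and no order-by-order induction are needed; the torsion observation does all the work.
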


\begin{proof}

  First take the relative tangent triangle for the triple of morphisms $\widecheck{\frX} \to \PP^1 \to \Spec k^\dagger$, and push it forward under $\rho$. This produces an exact sequence on $\PP^1$:
  \[ R \rho_* \Theta_{\widecheck{\frX}/\PP^1} \to R \rho_* \Theta_{\widecheck{\frX}/\Spec k^\dagger} \to R \rho_* \rho^* \Theta_{\PP^1 / \Spec k^\dagger} \]
  applying the projection formula to the last term we see that the cokernel of $R^1 \rho_* \Theta_{\widecheck{\frX}/\PP^1} \to R^1 \rho_* \Theta_{\widecheck{\frX}/\Spec k^\dagger}$ is torsion supported on the singular fibres. In particular over $\GG_m$ it is split. Pulling back to $\GG_m$ and applying cohomology and base change we obtain a map:

  \[ (R^1 \rho_* \Theta_{\widecheck{\frX}/\PP^1}) ^\circ \cong R^1 \rho_{*}^\circ \Theta_{\widecheck{\frX}^\circ / \GG_m}\]

The Grothendieck spectral sequence gives a map $H^1 (\Theta_{\widecheck{\frX} / \Spec k^\dagger}) \to \Gamma (R ^1 \rho_* (\Theta_{\widecheck{\frX} / \PP^1}))$, and since $\GG_m$ is affine composing with a choice of splitting we get the desired map.

\end{proof}

By our assumptions on the surjectivity of the map $H^1 (\Theta_{\widecheck{\frX}_0 / \Spec k}) \to H^1 (\Theta_{\frD_0 / \Spec k})$ there is an open dense $\frB \subset \GG_m$ where the above map is surjective on fibres. This is our choice of $\frB$.

We lift this formal statement about differentials to show that we can glue the formal smoothings:

\begin{lem}

  Let $\frF \to \frB \times \Spec k[\![t]\!]$ be a divisorial family deforming a trivial $\frD_0$ bundle. Then there is a non-canonical map $\frB \times \Spec k[\![t]\!] \to \frB \times \AA (H^1 (\Theta_{\widecheck{\frX} / \Spec k^\dagger}))$.
 ~\label{GlobalToRelative} 
\end{lem}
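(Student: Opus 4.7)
The plan is to construct the classifying map order by order in the $t$-adic filtration, bootstrapping from the formal versality of $\widecheck{\frX} \to \Spec \widehat{k[P]}$ (which we identify with a formal neighbourhood of the origin of $\AA(H^1(\Theta_{\widecheck{\frX}/\Spec k^\dagger}))$) together with the surjection produced in the previous lemma. Write $\cA_n := \frB \times \Spec k[t]/(t^{n+1})$ and let $\frF_n$ denote the restriction of $\frF$ to $\cA_n$. At order zero, $\frF_0$ is the trivial $\frD_0$-bundle over $\frB$, so I would send $\cA_0 = \frB$ to the origin of $\AA(H^1(\Theta_{\widecheck{\frX}/\Spec k^\dagger}))$, the identity on the $\frB$-factor; the pullback of $\widecheck{\frX}$ along this map recovers $\frF_0$.

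For the inductive step, assume $\phi_n: \cA_n \to \frB \times \AA(H^1(\Theta_{\widecheck{\frX}/\Spec k^\dagger}))$ has been constructed, is the identity on $\frB$, and pulls $\widecheck{\frX}$ back to $\frF_n$. Choose any extension $\tilde{\phi}_{n+1}$ of $\phi_n$ to $\cA_{n+1}$. Then $\tilde{\phi}_{n+1}^*\widecheck{\frX}$ and $\frF_{n+1}$ are two divisorial lifts of $\frF_n$ along the small extension $\cA_n \subset \cA_{n+1}$ with ideal $\cO_\frB \cdot t^{n+1}$, and by the relative tangent-obstruction theorem above, their difference is measured by a class $\delta_{n+1} \in R^1\rho_*\Theta_{\frF_n/\cA_n}\otimes t^{n+1}$. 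The central-fibre surjectivity hypothesis $H^1(\Theta_{\widecheck{\frX}_0/\Spec k}) \twoheadrightarrow H^1(\Theta_{\widecheck{\frD}_0/\Spec k})$ propagates by semicontinuity to a Zariski open that we take to be $\frB$ after possibly shrinking, making the map of the preceding lemma surjective over $\frB$. Choose a (non-canonical) preimage $v_{n+1} \in H^1(\Theta_{\widecheck{\frX}/\Spec k^\dagger}) \otimes \Gamma(\frB,\cO)\cdot t^{n+1}$ of $\delta_{n+1}$ and translate $\tilde{\phi}_{n+1}$ by $v_{n+1}$ using the affine structure on the target to produce $\phi_{n+1}$, whose pullback of $\widecheck{\frX}$ is now $\frF_{n+1}$. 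Passing to the inverse limit $\phi = \varprojlim \phi_n$ yields the desired map.

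The non-canonicity is inherent in the choice of splitting used in the hypothesis of the preceding lemma and in the successive lifts $v_{n+1}$ at each order. The main technical obstacle I anticipate is verifying that the relative cohomology sheaf $R^1\rho_*\Theta_{\frF_n/\cA_n}$ is compatible with base change along the nilpotent thickenings $\cA_n \to \frB$, so that $\delta_{n+1}$ is a genuine global section on $\cA_n$ and not merely a pointwise collection over $\frB$; this is where the Frobenius-independence result of the previous section does the real work, combined with the flatness built into the definition of a relative divisorial deformation. Once base change holds, the induction closes cleanly: each $v_{n+1}$ is an independent choice and no coherence across orders needs to be tracked beyond matching the next differential, so no pro-obstruction arises in the inverse limit.
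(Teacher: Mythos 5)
Your proposal follows essentially the same inductive argument as the paper: canonical map at order zero, at each step measure the lift by a class in $R^1\rho_*\Theta_{\frY/\frB}$ via the relative tangent--obstruction theorem, lift it through the surjection $H^1(\Theta_{\widecheck{\frX}/\Spec k^\dagger})\otimes_k k[x^{\pm}]\twoheadrightarrow R^1\rho_*\Theta_{\frY/\frB}$ from the preceding lemma, and translate the classifying map accordingly. Your elaboration on base-change compatibility and the role of Frobenius-independence is more explicit than what the paper writes out, but it is the same proof.
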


\begin{proof}

  For $t=0$ there is a canonical such map. We now induct on the order, suppose that we have a choice of map $\frB \times \Spec k[\![t]\!]/\langle t^k \rangle \to \frB \times \AA (H^1 (\Theta_{\widecheck{\frX} / \Spec k}))$ for some $k$. Then from the above analysis the lift of $\frF$ to $\frB \times \Spec k[\![t]\!]/\langle t^k \rangle$ defines an element of $R^1 \rho_* (\frY, \Theta_{\frY / \frB} \otimes _\frB \frB)$, up to an element of $R^0 \rho_* (\frY, \Theta_{\frY / \frB} \otimes _\frB \frB)$. But the map $H^1 (\Theta_{\widecheck{\frX} / \Spec k}) \otimes_k k[x ^{\pm}] \to R^1 \rho_* (\frY, \Theta_{\frY / \frB} \otimes _\frB \frB)$ is surjective and so we may lift it to an element of $H^1 (\Theta_{\widecheck{\frX} / \Spec k}) \otimes_k k[x ^{\pm}]$.

  But this gives a lift of $\frF$ inside $\widecheck{\frX}$, hence an embedding $\frB \times \Spec k[\![t]\!]/\langle t^{k+1} \rangle \to \frB \times \AA (H^1 (\Theta_{\widecheck{\frX} / \Spec k}))$.
  
\end{proof}

We need only one more tool to prove the claimed theorem.

\begin{lem}

Suppose that $h^1 (\Theta_{\widecheck{\frD} / \Spec k}) = 1$. Then there is a formal automorphism of $\frB \times \Spec k[\![t]\!]$ such that the induced map $\frB \times \Spec k[\![t]\!] \to \frB \times \AA (H^1 (\Theta_{\widecheck{\frX} / \Spec k}))$ sends elements of $H^1 (\Theta_{\widecheck{\frX} / \Spec k})$ to pure powers of $t$.
  
\end{lem}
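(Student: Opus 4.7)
The plan is to put the classifying map $\Phi$ of Lemma~\ref{GlobalToRelative} into the normal form $(b,t) \mapsto (b, t^m \cdot v_0)$ for some integer $m \geq 1$ and some nonzero $v_0 \in H^1(\Theta_{\widecheck{\frX}/\Spec k})$, by combining a formal reparametrization in $t$ with the non-canonical freedom already present in the construction of $\Phi$.

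Using the splitting provided by the hypothesis, I would decompose $H^1(\Theta_{\widecheck{\frX}}) = \langle v_0 \rangle \oplus K$, where $v_0$ spans the one-dimensional image $s(H^1(\Theta_{\widecheck{\frD}}))$ and $K$ is the kernel of the surjection. Correspondingly, $\Phi$ decomposes as a pair $(f, g)$ where $f(b,t) \in \cO_\frB[\![t]\!]$ is its $v_0$-component and $g(b,t) \in K \otimes \cO_\frB[\![t]\!]$ is its $K$-component. Because $\frF$ is a non-trivial smoothing of the trivial $\frD_0$-bundle and the Kodaira-Spencer class of a non-trivial deformation is non-zero in the one-dimensional space $H^1(\Theta_{\widecheck{\frD}})$, the series $f$ vanishes at $t = 0$ to some finite order $m \geq 1$, with leading term $a_m(b)\, t^m$ and $a_m \not\equiv 0$. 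After restricting $\frB$ to an affine open on which $a_m$ is a unit, and passing to a finite étale cover to extract a formal $m$-th root of $a_m$ if needed, the substitution $\tilde{t}$ defined by $\tilde{t}^m = f(b,t)$ via Weierstrass-style preparation is a formal automorphism of $\frB \times \Spec k[\![t]\!]$ fixing the central fibre and normalising the $v_0$-component of $\Phi$ to $\tilde{t}^m$.

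The main difficulty is to arrange simultaneously for the $K$-component $g$ to vanish, which is exactly what ``sends elements of $H^1(\Theta_{\widecheck{\frX}})$ to pure powers of $t$'' demands. Here the non-canonicity in Lemma~\ref{GlobalToRelative} is decisive: at each inductive order in $t$, the lift of the relative deformation class from $R^1\rho_*\Theta_{\frY/\frB}$ back to $H^1(\Theta_{\widecheck{\frX}}) \otimes_k k[x^{\pm}]$ was determined only up to the kernel of the surjection onto $R^1\rho_*$. This kernel contains $K \otimes_k \cO_\frB$, since $K$-valued classes map to zero in $H^1(\Theta_{\widecheck{\frD}})$ and, under the splitting hypothesis with $h^1(\Theta_{\widecheck{\frD}}) = 1$, the sheaf $R^1\rho_*\Theta_{\frY/\frB}$ is a rank-one quotient whose generic stalk is $H^1(\Theta_{\widecheck{\frD}_b})$. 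Proceeding inductively on the $\tilde{t}$-order above $m$, one uses this freedom to cancel successive coefficients of $g$ in $K \otimes \cO_\frB$, producing the desired normal form of $\Phi$. The obstacle that must be checked carefully is precisely this containment of $K \otimes \cO_\frB$ inside the kernel of the surjection $H^1(\Theta_{\widecheck{\frX}}) \otimes \cO_{\GG_m} \to R^1\rho_*\Theta_{\frY/\frB}$ over the (possibly shrunk) open $\frB$; once this compatibility of the global splitting with the relative deformation sheaf is established, the inductive cancellation proceeds order by order and concludes the argument.
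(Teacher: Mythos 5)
Your approach is a genuine departure from the paper's proof, which is the single sentence ``this follows from the formal version of the inverse function theorem applied to generators of $H^1(\Theta_{\widecheck{\frX}/\Spec k^\dagger})$.'' You instead decompose $H^1(\Theta_{\widecheck{\frX}})$ as $\langle v_0\rangle\oplus K$ via the splitting of the restriction map, normalise the $v_0$-component of the classifying map $\Phi$ by a Weierstrass-style preparation (which covers the case of higher-order vanishing $m>1$, where the inverse function theorem alone would not apply), and then use the non-canonicity built into Lemma~\ref{GlobalToRelative} to kill the $K$-component order by order. What this buys is that the role of the non-canonicity is made explicit, whereas the paper's IFT invocation only obviously handles the one-dimensional $\widecheck{\frD}$-direction and is silent on why the remaining $(n-1)$ ambient directions of $H^1(\Theta_{\widecheck{\frX}})$ can be arranged to vanish or to become monomials simultaneously. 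In that sense your version surfaces content that the one-line proof elides.

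That said, the obstacle you flag at the end is a real gap, not merely a detail to be checked: the kernel of the surjection $H^1(\Theta_{\widecheck{\frX}})\otimes\cO_{\GG_m}\to R^1\rho_*\Theta_{\frY/\frB}$ is a subsheaf whose fibres vary with $b\in\frB$, and it is the kernel of the restriction $H^1(\Theta_{\widecheck{\frX}})\to H^1(\Theta_{\widecheck{\frD}_b})$, which need not coincide with the fixed subspace $K=\ker\bigl(H^1(\Theta_{\widecheck{\frX}})\to H^1(\Theta_{\widecheck{\frD}_{b_0}})\bigr)$ away from the initial fibre $b_0$. So the containment $K\otimes\cO_\frB\subseteq\ker$ does not follow formally; one either needs to show the kernel subsheaf is flat over the shrunk $\frB$ (so that the ambiguity at each order genuinely spans a full complement of the rank-one image), or one should formulate the cancellation step fibrewise using the varying kernel sheaf rather than the fixed subspace $K$. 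Without one of these, the inductive cancellation is not justified as written. It would be worth stating explicitly what property of $R^1\rho_*\Theta_{\frY/\frB}$ over the open $\frB$ you are using, since that is where both your argument and the paper's terse invocation would have to be completed.
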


\begin{proof}

  This follows from the formal version of the inverse function theorem applied to generators of $H^1 (\Theta_{\widecheck{\frX} / \Spec k^\dagger})$.
  
\end{proof}

We can now prove the existence of the embedding:

\begin{thm}

  There is a formal open embedding of $\frW^0$ into $\widecheck{\frX}$ compatible with the fibration structure.
\label{ReconstructionByGluing}
\end{thm}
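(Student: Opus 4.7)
The plan is to build the formal open embedding order-by-order in the smoothing parameter $t$ over $\Spec\widehat{k[P]}$, seeding from the central-fibre embedding and chaining together Lemma~\ref{GlobalToRelative}, the relative divisorial deformation theorem, and the formal inverse-function-theorem normalisation.

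First I would restrict everything to the open $\frB\subset\GG_m$ on which the surjectivity hypothesis $H^1(\Theta_{\widecheck{\frX}_0})\to H^1(\Theta_{\widecheck{\frD}_0})$ is available and on which both $\widecheck{\frX}^\circ\to\frB$ and $\frW_0^\circ\to\frB$ are generically divisorial smoothings of a trivial $\frD_0$-bundle. Over the central fibre $t=0$ of $\Spec\widehat{k[P]}$ the hypothesis supplies a compatible open embedding $\frW_{0,0}^\circ\hookrightarrow\widecheck{\frX}_0^\circ$ of fibrations over $\frB$, which serves as the base case of the induction.

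Next I would induct on the order of $t$. Assume a fibration-compatible open embedding modulo $t^k$ has been built. The restriction of $\frW_0^\circ$ modulo $t^{k+1}$ is a relative divisorial deformation whose lifting obstruction, by the relative deformation theorem above, sits in $R^1\rho_*(\Theta_{\frY/\frB}\otimes I)$. Lemma~\ref{GlobalToRelative} lifts this class to $H^1(\Theta_{\widecheck{\frX}/\Spec k^\dagger})\otimes_k k[x^{\pm}]$, and by formal versality of $\widecheck{\frX}\to\Spec\widehat{k[P]}$ it is realised by an order-$(k+1)$ infinitesimal deformation already sitting inside $\widecheck{\frX}$. Since openness of an immersion is preserved under infinitesimal thickening of a fibrewise-flat family, the order-$k$ embedding extends to an order-$(k+1)$ open embedding. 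After iterating I would apply the formal inverse-function-theorem lemma: the assumption $h^1(\Theta_{\widecheck{\frD}/\Spec k})=1$ allows me to change formal coordinates on $\frB\times\Spec k[\![t]\!]$ so that a generator of $H^1(\Theta_{\widecheck{\frD}})$ corresponds to a pure power of $t$, identifying the formal parameter on $\frW_0^\circ$ with the parameter of $\widecheck{\frX}$ and assembling the orders into a single formal open embedding.

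The main obstacle I anticipate is the compatibility with the fibration. Lemma~\ref{GlobalToRelative} uses a non-canonical splitting of a map that a priori ignores the projection to $\PP^1$, and versality alone only pins down the abstract isomorphism class of each infinitesimal deformation. To upgrade each induction step to one preserving the projection to $\PP^1$ (equivalently to $\AA^1$ on the $\frW_0$ side) I would need to choose the splitting so that the lifted classes lie in the image of the relative tangent map $R^1\rho_*\Theta_{\widecheck{\frX}/\PP^1}\to R^1\rho_*\Theta_{\widecheck{\frX}/\Spec k^\dagger}$ appearing in the proof of Lemma~\ref{GlobalToRelative}. This should be achievable because the cokernel of that map is torsion supported on singular fibres and therefore vanishes over $\frB\subset\GG_m$. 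With this compatibility in hand, the lifts produced by versality do not twist the map to $\PP^1$, and the order-by-order embeddings glue to a formal open embedding compatible with the fibrations.
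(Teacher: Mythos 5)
Your proposal and the paper's proof share the same first half: both use Lemma~\ref{GlobalToRelative} together with the inverse-function-theorem normalisation to produce, over the open set $\frB \subset \GG_m$, a ``horizontal'' embedding of the relative family $\frW_0^\circ$ into the corresponding degeneration of $\widecheck{\frX}$ restricted to $\frB$, and both invoke versality of $\widecheck{\frX} \to \Spec \widehat{k[P]}$. Your order-by-order induction is essentially an unpacking of the proof of Lemma~\ref{GlobalToRelative}, and your handling of fibration compatibility via the vanishing of the torsion cokernel of $R^1\rho_*\Theta_{\widecheck{\frX}/\PP^1} \to R^1\rho_*\Theta_{\widecheck{\frX}/\Spec k^\dagger}$ over $\GG_m$ matches the paper's reasoning there.

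However, there is a genuine gap. Everything you construct lives over $\frB$, where the relative deformation theory makes sense and the fibration is a nice $\widecheck{\frD}_0$-bundle. What you end up with is a formal open embedding of $\frW_0^\circ = \frW_0|_{\frB}$ into $\widecheck{\frX}$, whereas the theorem asserts an embedding of $\frW^0$ over all of $\AA^1$, including the locus $\AA^1 \setminus \frB$ where the superpotential has critical values and the fibration degenerates. Nothing in your induction controls that locus: the relative deformation functor is set up with $\frB \subset \GG_m$ affine and the fibres a trivial $\frD_0$-bundle precisely because this is where the obstruction-theoretic machinery applies. The paper closes this gap with a gluing step that you omit entirely: one glues $\frW_0$ (over $\AA^1$) to the piece of the degeneration of $\widecheck{\frX}$ sitting over the complementary open set $\PP^1 \setminus ((\AA^1 \setminus \frB) \cup \{0\})$, along the common dense open over $\frB$ that the first half of the argument provides. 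The glued object is a family over $\PP^1 \times \Spec k[\![t]\!]$ which is everywhere locally a divisorial deformation of the compact space $\widecheck{\frX}_0$, and versality is then applied once, globally, to identify this family as a fibre of the versal family $\widecheck{\frX}$, carrying the open subscheme $\frW_0$ along as an open embedding. It is this compactify-then-use-versality step, not an infinitesimal extension argument, that upgrades the relative statement over $\frB$ to the embedding of the full $\frW^0$.

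A minor further caution: your sentence ``Since openness of an immersion is preserved under infinitesimal thickening of a fibrewise-flat family, the order-$k$ embedding extends to an order-$(k+1)$ open embedding'' conflates preservation of the open-immersion property with existence of the lift. Even granting that a lift is an open immersion whenever it exists, the question of whether the two abstract order-$(k+1)$ deformations (of $\frW_0^\circ$ on one side, of the corresponding open piece of the versal family on the other) are isomorphic \emph{compatibly with the chosen order-$k$ embedding} requires matching the deformation classes, which is what Lemma~\ref{GlobalToRelative} and the $h^1(\Theta_{\widecheck{\frD}}) = 1$ normalisation are doing; it does not follow from flatness alone. The paper sidesteps this by producing the embedding directly from the versal property rather than by lifting it order by order.
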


\begin{proof}

  Suppose we have a divisorial smoothing of $\frW^0$ over $\Spec k[\![t]\!]$. The same trick as~\ref{GlobalToRelative} produces a relative deformation of the fibres. Then we can find an embedding of the restriction of this smoothing to $\frB$ horizontally into the smoothings of $\widecheck{\frX}$ restricted to $\frB$. We take the corresponding degeneration of $\widecheck{\frX}$ over $\Spec k[\![t]\!]$. Then there is a common dense open $\frU$ of $\frW^i$ and $\widecheck{\frX}$ which we can identify.

  Remove from $\PP^1$ the set $\AA^1 \setminus \frB \cup \{ 0 \}$ to obtain an open subset of $\widecheck{\frX}$ and glue along the open dense set found above. This produces a family which is locally everywhere a divisorial deformation of $\widecheck{\frX}_0$ over $\Spec k[\![t]\!]$. But the family $\widecheck{\frX}$ is a versal deformation space for divisorial deformations of $\widecheck{\frX}_0$, and hence this lifts to an open embedding of $\frW^i$ into a family of deformations of $\widecheck{\frX}$, which is the desired gluing statement. 
  
\end{proof}

\begin{eg}

  The mirror to a quintic threefold, a sextic fourfold, etc, can be constructed as a specialisation of a family of Calabi-Yau's obtained by gluing the two Landau-Ginzburg models obtained as mirrors to the components of the Tyurin degeneration constructed by blowing up both sides evenly.

\end{eg}

The same technique works for higher rank LG models under some comparable restrictions so long as one works with formal complex analytic spaces. The restriction is introduced part way through the construction, after we have introduced some terminology.

\begin{con}

  Let $\frX \to \cS$ be a type $k+1$ toric-Tyurin degeneration. There are two associated tropical structures. The first is the tropicalisation of the type $k+1$ locus which defines an affine manifold with singularities $\Xi_{gen} = \Sigma (\frX_{gen})$ but without a fan structure around the vertices. The second is the tropicalisation of the central fibre, defining for us an affine manifold with singularities $\Xi_0 = \Sigma (\frX_{0})$ with a fan structure around the vertices. There is a canonical map $\rho_{\Xi}:\Xi_{gen} \to \Xi_0$ given by specialisation. We take the vanilla gluing data on $\Xi_0$ and $\Xi_{gen}$

  To this data we have a reduced reducible scheme given by taking the cone picture on $\Xi_{gen}$. We write $\cB_{gen} = \Proj k[\Xi_{gen}]$ for this scheme, noting that without any more integral affine structure we cannot deform this even to first order. The map $\rho_{\Xi}$ lifts to a map $\rho_{\frX}: \Proj k[\Xi_0] \to \Proj k[\Xi_{gen}]$. 
  
  We are now ready for our assumption. Choose a zero dimensional strata $b \in \Xi_{gen}$ corresponding to a component of the general fibre over the Tyurin degenerate locus $\frZ^0$. Suppose that the induced degeneration of $\frZ^0$ is toric, simple and the boundary divisors of $\frZ^0$ are nef. This furnishes us with a collection of top dimensional strata of $\Xi_{gen}$, $\sigma_1 , \ldots , \sigma_m$ corresponding to the zero-dimensional strata of $\frZ^0$ and intersecting at $b$. We write $\frD_i$ for the Calabi-Yau variety corresponding to $\sigma_i$, and we assume that all the induced degenerations are simple. Locally there is a morphism of affine manifolds with singularities $\rho_{\Sigma}: \Xi_{gen} \to \Sigma (\frZ^0, \partial \frZ^0)$.

  We can construct a higher rank $LG$ model as the mirror to $(\frZ^0, \partial \frZ^0)$, by taking the induced tropical map to be evaluation against the irreducible strata of $\partial \frZ^0$ and we write this $\frW \to \frB$. The base $\frB$ is obtained by gluing a collection of toric varieties along toric strata, and each $\sigma_i$ corresponds to a dense torus in a toric component. By assumption there is a unique zero dimensional strata which we are free to conflate with $b$. Over the interior of each $\sigma_i$ the map $\rho_{\frX}$ is a locally trivial $\frD_i$ bundle. In particular it is a relatively divisorial family. We assume that the map $H^1 (\Theta_{\widecheck{\frX}_0 / \Spec k}) \to \oplus H^1 (\Theta_{\frD_{i} / \Spec k})$ is surjective and $\dim \Spec k[P] \geq \sum h^1 (\Theta_{\frD_{i} / \Spec k})$.

  Restrict the higher rank LG model mirror to $\frW_0 \to \frB$ to an analytic open set $U_P$ containing $b$. Take an open set of $\cB$ overlapping $U_P$ only inside the $\sigma_i$, and so that the complement $\delta_P$ is codimension two. The fibre of $\rho_{\frX}$ over this set is then codimension also at least two inside the total space $\widecheck{\frX}_{0}$. The same argument above would show that we can glue these two complex analytic varieties except over a closed set of codimension two. This uniquely defines a global deformation of $\frX$ compatible with the above gluing. Hence we can extend it to a globally defined gluing since differentials extend uniquely across codimension two sets.

\end{con}

Our restriction on $H^1 (\Theta_{\widecheck{\frX}_0 / \Spec k^\dagger}) \to H^1 (\Theta_{\frD_0 / \Spec k ^\dagger})$ has an interpretation on the mirror side which is more easily checkable. Assume that $\frX$ is an ample complete intersection inside a Fano toric variety $T$ of dimension at least 3. Up to torsion we have an isomorphism $H^2 (\frX) \cong H^1 (\Omega_\frX)$. Then by the Lefschetz hyperplane theorem there is an isomorphism $H^2_\QQ (T) \cong H^2_\QQ (\frX)$. Via composition with restriction there is therefore a canonical morphism $H^2_\QQ(\frX) \to \oplus H^2_\QQ (\frD_i)$.

\begin{lem}

  In this situation the above condition holds so long as each degeneration of $\frD_i$ is simple and $H^2_\QQ(\frX) \to \oplus H^2_\QQ (\frD_i)$ is surjective.
  
\end{lem}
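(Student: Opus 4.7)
The plan is to translate the required tangent-cohomology surjectivity on the mirror side into the stated $H^2$-surjectivity on the Fano side by combining the Gross-Siebert computation of $H^1(\Theta)$ for simple toric Calabi-Yau degenerations with the Lefschetz hyperplane theorem.

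First I would invoke Gross-Siebert mirror symmetry. For a simple toric Calabi-Yau degeneration one has a canonical isomorphism $H^1(\Theta_{\widecheck{\frX}_0/\Spec k^\dagger}) \cong H^{1,1}_\QQ(\frX)$, realising the $h^{n-1,1}(\widecheck{\frX}) \leftrightarrow h^{1,1}(\frX)$ exchange of Batyrev-Borisov mirror symmetry. The hypothesis that each induced degeneration of $\frD_i$ is simple is precisely what permits applying the same formula to the $\frD_i$, yielding $H^1(\Theta_{\frD_{i}/\Spec k^\dagger}) \cong H^{1,1}_\QQ(\frD_i)$, where on the right-hand side $\frD_i$ is reinterpreted via mirror symmetry as an ample complete intersection Calabi-Yau cycle in a toric subvariety of $T$.

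Next I would match the maps. The closed embedding of dual intersection complexes produced by Lemma~\ref{Localembedding} induces a restriction on integral tangent sheaves, and I claim this is compatible with the Gross-Siebert identification so that the map $H^1(\Theta_{\widecheck{\frX}_0}) \to H^1(\Theta_{\frD_i})$ becomes, under the isomorphisms of the previous step, the classical Lefschetz pullback $H^{1,1}_\QQ(\frX) \to H^{1,1}_\QQ(\frD_i)$ of K\"ahler classes. Finally I would apply Lefschetz on the Fano side: the hypothesis that $\frX \subset T$ is an ample complete intersection with $\dim T \geq 3$ gives $H^2_\QQ(T) \xrightarrow{\sim} H^2_\QQ(\frX)$, and toricness of $T$ implies $H^2_\QQ(T) = H^{1,1}_\QQ(T)$, forcing $H^2_\QQ(\frX) = H^{1,1}_\QQ(\frX)$. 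The same argument applied to each $\frD_i$ (which, as above, is an ample complete intersection in a toric subvariety) yields $H^2_\QQ(\frD_i) = H^{1,1}_\QQ(\frD_i)$, so the assumed $H^2$-surjectivity is exactly the $H^{1,1}$-surjectivity and hence, by the previous two steps, the desired tangent-cohomological surjectivity.

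The main obstacle will be the second step: rigorously identifying the Gross-Siebert restriction with the classical pullback of K\"ahler classes. This naturality claim requires tracking how the tropical closed embedding of Lemma~\ref{Localembedding} interacts with each layer of the mirror construction, from integral tangent sheaves through to the $(1,1)$-cohomology of the Fano side, and it is the technical content that really ties the tropical and algebraic pictures together.
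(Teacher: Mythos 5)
Your proposal is essentially the paper's argument made explicit: the paper's entire proof consists of citing Theorem 3.22 of~\cite{LogDegenerationII} (the Gross-Siebert computation identifying $H^1(\Theta)$ of the mirror's central fibre with the $H^{1,1}$-cohomology of the degenerating Calabi-Yau for simple toric degenerations), which is precisely your first step, with the Lefschetz comparison and the compatibility of restriction maps left implicit in the surrounding discussion just as you flag them as the remaining technical work.
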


\begin{proof}

  We apply Theorem 3.22 of ~\cite{LogDegenerationII}.

  \end{proof}

Constructing examples of such degenerations seems to be a hard problem, and the initial assumptions on surjectivity are certainly not sharp, it rarely includes the case of even toric hypersurfaces! One family of examples can be constructed via subdividing the hypercube $[-1,1]^{k}$ along the coordinate hyperplanes and studying an anti-canonical section. This produces a $k$-Tyurin degeneration whose minimal strata are therefore points. There is a further toric degeneration given by scaling the toric parameters.

A better solution than classifying examples would be to extend the work of~\cite{CLM} and ~\cite{FFR} to the case of log toroidal families over bases where the log structure on the base is allowed to change. How much of the theory can be extended from the fibrewise theory is at present unclear. Nonetheless there are some forthcoming results of Doran, Kostiuk and You in which they prove gluing formulae for periods under such higher rank degenerations. Since their results do not require such strong assumptions one would hope that better technology would allow us to lift the restrictions here.

\bibliographystyle{abbrv}
\bibliography{Uploaded}

\noindent Lawrence J. Barrott \\
Boston College, Boston, USA \\
\href{mailto:barrott@bc.edu}{barrott@bc.edu}\\

\noindent Charles F. Doran \\
University of Alberta, Edmonton, Canada \\
Center for Mathematical Sciences and Applications, Cambridge, USA\\
\href{mailto:charles.doran@ualberta.ca}{charles.doran@ualberta.ca}

\end{document}